\tikzset{>=stealth}
\theoremstyle{plain}
\newtheorem{thm}{Theorem}[section]
\newtheorem{lem}[thm]{Lemma}
\newtheorem{prop}[thm]{Proposition}
\newtheorem{cor}[thm]{Corollary}
\theoremstyle{definition}
\newtheorem{defn}[thm]{Definition}
\newtheorem{rmk}[thm]{Remark}
\newtheorem{note}[thm]{Note}
\newcommand{\OO}{\mathcal{O}}
\title{Higher syzygies on abelian surfaces}
\author{Jaesun Shin}
\date{}
\address{Department of Mathematical Sciences, KAIST, 291 Daehak-ro, Yuseong-gu, Daejon 305-701, Korea}
\email{jsshin1991@kaist.ac.kr}
\begin{document}
\maketitle
\begin{abstract}
Based on the theory of an infinitesimal Newton-Okounkov body, we extend the results of Lazarsfeld-Pareschi-Popa \cite{LPP} on abelian surfaces. Moreover, we show that the higher syzygies of $(X,L)$ are completely determined by its Seshadri constant when $L^{2}$ is large. As an application, we improve the existing lower bound of $(L^{2})$ for higher syzygies of a polarized abelian surface $(X,L)$. 
\end{abstract}

\begin{section} {Introduction}
The study of how a variety can be embedded in a projective space is an important subject in algebraic geometry. The natural way to embed a variety $X$ to a projective space $\mathbb{P}$ is to consider a very ample line bundle $L$ on $X$. Once we know that $L$ is very ample, the next step is then to study its image on a projective space. This is highly related to the section ring of $L$ and its syzygy modules. This is why we study the higher syzygies of $L$ on $X$. 

The algebraic properties of $R(X,L)$ have a significant meaning in algebraic geometry since they imply many geometric properties of $X$. From Castelnuovo and Mumford (\cite{M}) to Green and Lazarsfeld (\cite{GL}), a new perspective was provided to this problem by studying the minimal graded resolution of $R(X,L)=\bigoplus H^{0}(X, L^{\otimes d})$. More precisely, let $S={\rm Sym}^{\bullet}(H^{0}(X,L))$ be the homogeneous coordinate ring of the projective space $\mathbb{P}$, and consider the graded $S$-module $R(X,L)$. As an $S$-module, the minimal graded free resolution of $R(X,L)$ looks like 
\begin{align*}
\cdots \rightarrow E_{p} \rightarrow \cdots \rightarrow E_{1} \rightarrow E_{0} \rightarrow R(X,L) \rightarrow 0,
\end{align*}  
where $E_{0}=S \oplus \bigoplus_{j}S(-a_{0j})$, $E_{1}=\bigoplus_{j} S(-a_{1j})$, and in general $E_{p}=\bigoplus_{j}S(-a_{pj})$ with $a_{pj} \ge p+1$ for any $j$. Then $L$ is said to satisfy property $N_{0}$ if $E_{0}=S$. Moreover, $L$ is said to satisfy property $N_{p}$ if it satisfies property $N_{p-1}$ and $a_{pj}=p+1$ for any $j$. 

As an illustration, translated to geometric terms, property $N_{0}$ means that the map $S \rightarrow R(X,L)$ is surjective, which implies that the Kodaira map $\phi_{L}$ induces a projectively normal embedding of $X$ into $\mathbb{P}$. Also, property $N_{1}$ is equivalent to further requiring that the homogeneous ideal of $X$ be generated by quadrics. 

Due to its geometric importance, there are many results that ensure property $N_{p}$ for $L$. From the work of Castelnuovo, Mattuck, Fujita, and Saint-Donat, Green (\cite{MG}) proved that $L$ satisfies property $N_{p}$ if ${\rm deg}L \ge 2g(X)+p+1$ when $X$ is a curve. This result stimulated many interesting questions. (See \cite{EL, GP, P} for instance.) One of these is how to connect some numerical invariants to property $N_{p}$ for $L$. In this regard, Lazarsfeld-Pareschi-Popa (\cite{LPP}) showed that if $\epsilon(X,L)>(p+2)g$, where $(X,L)$ is a polarized abelian variety of dimension $g$ and $\epsilon(X,L)$ is its Seshadri constant, and then $L$ satisfies property $N_{p}$. This extends the result of Hwang and To (\cite{HT}) on projective normality to higher syzygies.  

The main purpose of this paper is to analyze higher syzygies of a polarized abelian surface $(X,L)$. Since ${\rm dim}X=2$, \cite[Theorem A]{LPP} can be rephrased as follows: if $\epsilon(X,L)>2(p+2)$, then $L$ satisfies property $N_{p}$. In contrast to \cite{P}, the interesting aspect of statements of this kind lies in the case when $L$ is primitive: it is the first statement for higher syzygies of primitive line bundles. It is then natural to ask about higher syzygies of $L$ when $\epsilon(X,L) \le 2(p+2)$. Since \cite[Theorem A]{LPP} uses the assumption $\epsilon(X,L)>2(p+2)$ to construct a divisor whose multiplier ideal sheaf is `nice' (cf. \cite[Lemma 1.2]{LPP}), it is difficult to answer this question using the techniques of \cite{LPP}. 

By adjusting $L^{2}$ and $\epsilon(X,L)$, we provide an answer to this question. As a consequence of our result, we show property $N_{p}$ for $L$ when $\epsilon(X,L) \ge 2(p+2)$ (cf. Corollary \ref{cor:generalization LPP}). In this point of view, Theorem \ref{thm:main} is a generalization of \cite[Theorem A]{LPP} on abelian surfaces. In terms of applications, our result (Theorem \ref{thm:introduction}) is much more flexible than \cite[Theorem A]{LPP} by controlling the Seshadri constant and the self-intersection number. (See Corollary \ref{cor:general criterion} and Remark \ref{rmk:comparison with LPP}.) 

Our main result is the following. 

\begin{thm} {\rm (=Theorem \ref{thm:main})} \label{thm:introduction}
Let $(X,L)$ be a polarized abelian surface. Assume that 
\begin{align*}
(L^{2}) \cdot (\epsilon(X,L)-p-2)-(p+2) \cdot \epsilon(X,L)^{2} > 0.
\end{align*}
Then $L$ satisfies property $N_{p}$. 
\end{thm}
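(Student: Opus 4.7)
The plan is to follow the multiplier-ideal strategy of Lazarsfeld-Pareschi-Popa: to deduce property $N_p$, it suffices to produce, for a general $(p+2)$-tuple of points $x_{1}, \ldots, x_{p+2} \in X$, an effective $\mathbb{Q}$-divisor $F \equiv_{\mathrm{num}} (1-\delta) L$ (for some $\delta > 0$) whose multiplier ideal sheaf isolates the reduced union $\{x_{1}, \ldots, x_{p+2}\}$ in a neighborhood. Since $X$ is an abelian surface, translation invariance reduces this to constructing a single effective $\mathbb{Q}$-divisor $D_{0} \equiv_{\mathrm{num}} \gamma L$ with $\gamma < 1/(p+2)$ whose multiplier ideal $\mathcal{J}(X, D_{0})$ locally cuts out the origin; the sum $F := \sum_{i=1}^{p+2} t_{x_{i}}^{*} D_{0}$ of translates to generic points then furnishes the required $F$.

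To construct such a $D_{0}$, I would use the infinitesimal Newton-Okounkov body $\Delta_{-1}(L)$ at the origin on the blowup $\pi \colon X' \to X$ with exceptional divisor $E$. Any rational interior point $(t, s) \in \Delta_{-1}(L)$ corresponds to an effective $\mathbb{Q}$-divisor of class $tL$ on $X$ of multiplicity at least $s$ at the origin, and when $s > 2$ the surface-multiplicity bound $\mathrm{lct}_{0}(D_{0}) \le 2/\mathrm{mult}_{0}(D_{0})$ forces $\mathcal{J}(X, D_{0})$ to be a proper ideal at $0$. Crucially, the body $\Delta_{-1}(L)$ is constrained by two pieces of data: its total area is $(L^{2})/2$, and it contains the standard simplex of side $\epsilon := \epsilon(X, L)$ prescribed by the Seshadri constant. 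This is precisely the point where the new approach improves on LPP, which uses only the Seshadri input; carrying both constraints and optimising the choice of $(t, s)$ with $t < 1/(p+2)$ and $s > 2$ is what produces the stated inequality $(L^{2})(\epsilon - p - 2) - (p+2)\epsilon^{2} > 0$.

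The principal obstacle is arranging that $\mathcal{J}(X, D_{0})$ has the exact form required by LPP — neither the full structure sheaf (divisor too mild) nor an unexpectedly thick infinitesimal neighborhood of $0$ (divisor too singular or with a positive-dimensional log-canonical locus). The naive multiplicity bound yields only the containment $\mathcal{J}(X, D_{0}) \subseteq \mathfrak{m}_{0}$, so further calibration is needed: one chooses $D_{0}$ from a generic pencil of divisors provided by the Newton-Okounkov construction, tunes its coefficient to the log canonical threshold by a small perturbation, and then uses translation invariance to propagate the fine behavior to the $p+2$ chosen points. Verifying that the resulting $F$ meets the LPP multiplier-ideal hypothesis — in particular that no unforeseen higher-dimensional component of the vanishing locus appears after summing the translates — is where most of the technical work will lie.
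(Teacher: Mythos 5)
Your overall strategy --- reduce via Lazarsfeld--Pareschi--Popa to producing a single effective $\mathbb{Q}$-divisor $D_{0}\equiv \gamma L$ with $\gamma<\tfrac{1}{p+2}$ and $\mathcal{J}(X,D_{0})=\mathcal{I}_{0}$, and extract $D_{0}$ from the infinitesimal Newton--Okounkov body constrained by the Seshadri constant and the volume --- is exactly the route the paper takes (with $B=\tfrac{1}{p+2}L$ and the generic infinitesimal body $\Delta_{x}(B)$). Two remarks on the reduction itself: the LPP criterion needs only the one divisor $F_{0}\equiv\tfrac{1-c}{p+2}L$ at the origin (the passage to $p+2$ points is handled inside their proof on $X^{\times(p+2)}$ via the Poincar\'e bundle), so your step of summing translates $\sum t_{x_{i}}^{*}D_{0}$ is unnecessary and would anyway require a separate argument that the multiplier ideal of the sum splits; and the coordinates of the infinitesimal body are (coefficient of $E$, order of vanishing along $E$ at $z$), so a point $(t,s)$ of the body does not literally parametrize ``a divisor of class $tL$ with multiplicity $s$''.

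The genuine gap is at the decisive step, which you flag but do not resolve. The criterion ``some interior point has $s>2$'' (equivalently, a divisor of multiplicity $>2$ at the origin) only yields $\mathcal{J}(X,D_{0})\subseteq\mathfrak{m}_{0}$, and no amount of perturbing to the log canonical threshold rules out that the minimal lc centre is a \emph{curve} through the origin, in which case $\mathcal{J}(X,D_{0})\neq\mathcal{I}_{0}$ and the LPP hypothesis fails. The correct combinatorial criterion, which the paper imports from K\"uronya--Lozovanu (Theorem \ref{thm:infinitesimal Newton-Okounkov body}), is that the \emph{vertical slice} of $\Delta_{(E,z)}(\pi^{*}B)$ at $t=2$ have length $>1$: it is this extra length, not merely the presence of a point of height $>2$, that lets one cut the non-klt locus down to the origin. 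Correspondingly, the actual content of the proof is a convex-geometry contradiction you never carry out: assuming the slice at $t=2$ has length $\le 1$, one combines (a) $\Delta_{x}(B)\subseteq\{0\le y\le t\}$, (b) the inscribed inverted simplex of size $\epsilon(B;x)=\epsilon(X,L)/(p+2)>1$ from Proposition \ref{prop:moving Seshadri constant}, (c) convexity and the shape constraint of Proposition \ref{prop:Seshadri surface}, to bound $\mathrm{vol}(\Delta_{x}(B))\le\frac{\epsilon(B;x)^{2}}{2(\epsilon(B;x)-1)}$, which against $\mathrm{vol}(\Delta_{x}(B))=\frac{(L^{2})}{2(p+2)^{2}}$ contradicts the hypothesis $(L^{2})(\epsilon-p-2)-(p+2)\epsilon^{2}>0$. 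Until you identify the slice-length criterion (or prove an equivalent statement about the lc centre) and perform this volume comparison, the proposal does not establish the theorem.
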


Our approach in proving Theorem \ref{thm:introduction} relies on the theory of infinitesimal Newton-Okounkov bodies. The first work confirming property $N_{p}$ for a polarized abelian surface by using infinitesimal Newton-Okounkov bodies was introduced by K\" uronya and Lozovanu (\cite{KL1509}). We use their results and the important features of generic infinitesimal Newton-Okounkov bodies (\cite[Proposition 4.2]{KL1411}) to prove Theorem \ref{thm:introduction}.  

Furthermore, we show that the higher syzygies of $(X,L)$ are completely determined by $\epsilon(X,L)$ when $L^{2}$ is large. 

\begin{thm} {\rm (=Theorem \ref{thm:large})}
Let $(X,L)$ be a polarized abelian surface. Assume that $L^{2}>(p+2)(p+3)^{2}$. Then the following are equivalent:
\begin{enumerate}[(1)]
\item $\epsilon(X,L)>\frac{(L^{2})-\sqrt{{(L^{2})}^{2}-4(p+2)^{2}(L^{2})}}{2(p+2)}$. 
\item $L$ satisfies property $N_{p}$. 
\end{enumerate}
\end{thm}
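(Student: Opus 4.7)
The plan is to reduce the equivalence, via a short algebraic identity, to Theorem \ref{thm:main} in one direction and to an obstruction argument using elliptic curves in the other.

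Consider the concave quadratic $f(t) := (L^{2})(t-p-2) - (p+2)t^{2}$, whose positivity at $t = \epsilon(X,L)$ is precisely the numerical hypothesis of Theorem \ref{thm:main}. Its roots are
$$r_{\pm} \;=\; \frac{L^{2} \pm \sqrt{(L^{2})^{2} - 4(p+2)^{2}L^{2}}}{2(p+2)}.$$
Since $(p+3)^{2} - 4(p+2) = (p+1)^{2} > 0$, the assumption $L^{2} > (p+2)(p+3)^{2}$ yields $L^{2} > 4(p+2)^{2}$, so the discriminant is strictly positive and $r_{-} < r_{+}$ are real. Vieta gives $r_{-}\,r_{+} = L^{2}$, so $r_{-} < \sqrt{L^{2}} < r_{+}$. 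The standard bound $\epsilon(X,L) \le \sqrt{L^{2}}$ (from $(\pi^{\ast}L - \epsilon E)^{2} \ge 0$ on the blowup at a point) therefore forces $\epsilon(X,L) < r_{+}$ automatically, and condition (1) becomes equivalent to $r_{-} < \epsilon(X,L) < r_{+}$, i.e.\ to $f(\epsilon(X,L)) > 0$. Invoking Theorem \ref{thm:main} gives (2).

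For (2) $\Rightarrow$ (1), I argue by contrapositive. A direct substitution of $t = p+3$ into $(p+2)t^{2} - L^{2}t + L^{2}(p+2)$ shows the threshold is sharp: $L^{2} > (p+2)(p+3)^{2}$ is equivalent to $r_{-} < p+3$. Hence if $\epsilon(X,L) \le r_{-}$ then $\epsilon(X,L) < p+3$, which is strictly submaximal compared with $\sqrt{L^{2}}$ in our regime. By the classification of submaximal Seshadri constants on abelian surfaces (Bauer, Nakamaye), such a Seshadri constant must be computed by a (translate of an) elliptic curve $E \subset X$ through the Seshadri point, giving $L \cdot E = \epsilon(X,L) \le p+2$. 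Then $\deg(L|_{E}) \le p+2 < 2g(E) + p + 1$, so $L|_{E}$ fails property $N_{p}$ on $E$ by the sharp form of Green's theorem for curves. A restriction argument for Koszul cohomology, leveraging the vanishings $H^{i}(X, L^{\otimes k}) = 0$ for $i > 0, k \ge 1$ on the abelian surface, then propagates this failure back to $X$, contradicting (2).

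The main obstacle lies in the second paragraph: producing the elliptic-curve obstruction requires both the submaximal Seshadri-constant dichotomy on abelian surfaces and a careful restriction argument for $N_{p}$ from the surface to a subcurve. The first paragraph, by contrast, is clean and reveals a striking feature of the chosen threshold: $L^{2} > (p+2)(p+3)^{2}$ is precisely the regime in which $r_{-}$ falls below $p+3$, so that the elliptic-curve obstruction becomes available at exactly the same moment that condition (1) begins to coincide with the hypothesis of Theorem \ref{thm:main}. This numerical coincidence is what makes the equivalence crisp.
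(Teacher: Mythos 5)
Your first paragraph is exactly the paper's argument for $(1)\Rightarrow(2)$: from $r_-r_+=L^2$ and $\epsilon(X,L)\le\sqrt{L^2}$ one gets $\epsilon(X,L)<r_+$ for free, so condition $(1)$ is equivalent to $f(\epsilon(X,L))>0$ and Theorem \ref{thm:main} applies. That direction is fine.

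For $(2)\Rightarrow(1)$ your strategy (an elliptic-curve obstruction) is also the one the paper uses, but two steps as you state them do not hold up. First, the dichotomy you invoke is misquoted: on an abelian surface a \emph{submaximal} Seshadri constant need not be computed by an elliptic curve. The correct statement (Bauer--Szemberg, \cite{BS1}) is that $\epsilon(X,L)\ge\min\{\tfrac{\sqrt{7L^2}}{2\sqrt2},\,\epsilon_0(L)\}$, where $\epsilon_0(L)$ is the minimal $L$-degree of an elliptic curve; so to conclude that $\epsilon(X,L)<p+3$ forces an elliptic curve of degree $\le p+2$ you must additionally check $p+3\le\tfrac{\sqrt{7L^2}}{2\sqrt2}$. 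This does follow from $L^2>(p+2)(p+3)^2$ (indeed $\sqrt{7L^2/8}>(p+3)\sqrt{7(p+2)/8}\ge(p+3)\sqrt{7}/2>p+3$), but it is a needed verification, not a consequence of mere submaximality relative to $\sqrt{L^2}$. Second, and more seriously, the final step --- ``a restriction argument for Koszul cohomology propagates this failure back to $X$'' --- is asserted, not proved, and it is the genuine content of this implication: failure of $N_p$ for $L|_C$ on a subcurve $C\subset X$ does not in general imply failure of $N_p$ for $L$ on $X$ (Koszul cohomology does not restrict in that direction). The paper sidesteps this by citing \cite[Theorem 4.1]{KL1509}, which says precisely that property $N_p$ forces $\epsilon_0(L)\ge p+3$. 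If you want a self-contained substitute, the cleanest route is through higher-order embeddings rather than Green's theorem on $E$: property $N_p$ implies $(p+1)$-very ampleness (\cite[Remark 3.9]{EGHP}), and an elliptic curve $E$ with $(L.E)\le p+2$ gives $h^0(E,L|_E)\le p+2$, so the restriction $H^0(X,L)\to H^0(Z,L|_Z)$, which factors through $H^0(E,L|_E)$, cannot be surjective for a suitable length-$(p+2)$ scheme $Z\subset E$. As written, your proof of $(2)\Rightarrow(1)$ has a hole at exactly this propagation step.
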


Turning to applications, it is interesting to study Theorem \ref{thm:introduction} by using the bounds of the Seshadri constant, as in \cite[Corollary B]{LPP}. In general, it is difficult to control the Seshadri constant. However, it was shown in \cite{L96} that on a polarized abelian variety $(A,L)$ of dimension $g$, the Seshadri constant of $L$ can be estimated by a metric invariant, which is called the minimal period length of $(A,L)$. By adapting this argument, Bauer (\cite{T97}), Di Rocco, Harbourne, Kapustka, Knutsen, Syzdek, and Szemberg (\cite{TSBMAWT}) showed that if $(A,L)$ is very general, then
\begin{align*}
\frac{\sqrt[g]{2 \cdot (L^{g})}}{4} \le \epsilon(A,L) \le \sqrt[g]{(L^{g})}.
\end{align*}
In particular, it can be written as $\frac{1}{2\sqrt{2}}\sqrt{(L^{2})} \le \epsilon(A,L) \le \sqrt{(L^{2})}$ when ${\rm dim}A=2$. 

Let $(X,L)$ be a very general polarized abelian surface. By \cite[Theorem A]{LPP} and \cite[Theorem 1.(b)]{T97}, it is immediate that $L$ satisfies property $N_{p}$ if $L^{2} >32(p+2)^{2}$ (\cite[Corollary B]{LPP}). However, we prove that such a lower bound of $(L^{2})$ can be reduced to $\frac{81}{8}(p+2)^{2}$.

Moreover, when $(X,L)$ is a polarized abelian surface satisfying $\epsilon(X,L) \notin \mathbb{Z}$, we show that $\epsilon(X,L) \ge \sqrt{\frac{1}{2}(L^{2})}$ by using the theory of infinitesimal Newton-Okounkov bodies. Therefore we obtain:

\begin{cor} {\rm (=Corollary \ref{cor:general criterion})} 
Let $(X,L)$ be a polarized abelian surface, and let $p \ge 0$ be an integer. 
\begin{enumerate}[(1)]
\item Assume that $(X,L)$ is very general. If $L^{2}>\frac{81}{8}(p+2)^{2}$, then $L$ satisfies property $N_{p}$. In particular, the converse holds if $L^{2}>(p+2)(p+3)^{2}$. 
\item Assume that $\epsilon(X,L) \notin \mathbb{Z}$. If $L^{2}>\frac{9}{2}(p+2)^{2}$, then $L$ satisfies property $N_{p}$.
\end{enumerate}
\end{cor}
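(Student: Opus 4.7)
The plan is to reduce the corollary to Theorem \ref{thm:introduction} (=Theorem \ref{thm:main}) by substituting the available Seshadri lower bounds and doing elementary algebra. Rewriting the hypothesis of Theorem \ref{thm:main} gives the convex quadratic inequality
\begin{equation*}
q(\epsilon) := (p+2)\epsilon^{2} - (L^{2})\epsilon + (p+2)(L^{2}) < 0,
\end{equation*}
so property $N_{p}$ follows as soon as $\epsilon(X,L)$ lies strictly between the two roots of $q$. Hence it suffices to exhibit a lower bound $\epsilon_{0} \le \epsilon(X,L)$ with $q(\epsilon_{0}) < 0$; one must also check that $\epsilon_{0}$ stays below the larger root, but this is automatic in both ranges considered below.

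For the forward implication of (1), I would plug in Bauer's bound $\epsilon_{0} = \sqrt{(L^{2})/8}$ available for very general polarized abelian surfaces (\cite{T97}). Dividing $q(\epsilon_{0}) < 0$ through by $L^{2}$ and collecting terms yields $\tfrac{9}{8}(p+2) < \sqrt{(L^{2})/8}$, and squaring produces exactly $(L^{2}) > \tfrac{81}{8}(p+2)^{2}$. For (2), I would plug in $\epsilon_{0} = \sqrt{(L^{2})/2}$, which holds under $\epsilon(X,L) \notin \mathbb{Z}$ by the infinitesimal Newton--Okounkov body bound established earlier in the paper; the parallel computation $\tfrac{3}{2}(p+2) < \sqrt{(L^{2})/2}$ then gives the threshold $(L^{2}) > \tfrac{9}{2}(p+2)^{2}$.

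For the converse direction of (1), the hypothesis $(L^{2}) > (p+2)(p+3)^{2}$ is exactly what Theorem \ref{thm:large} requires, upgrading the forward implication to the equivalence $N_{p} \Leftrightarrow \epsilon(X,L) > \tfrac{(L^{2}) - \sqrt{(L^{2})^{2} - 4(p+2)^{2}(L^{2})}}{2(p+2)}$. Combining this equivalence with the matching upper Seshadri bound available for very general abelian surfaces and rearranging so as to isolate the radical before a single squaring step forces $(L^{2}) > \tfrac{81}{8}(p+2)^{2}$.

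The computations are all routine; the one place demanding attention is the converse, where I must verify that the squaring step is reversible (i.e.\ both sides are non-negative under the standing assumptions) and that the resulting polynomial inequality really collapses to the threshold $\tfrac{81}{8}(p+2)^{2}$. Both are direct algebraic checks, so I do not anticipate a substantive obstacle: Theorem \ref{thm:main}, Theorem \ref{thm:large}, Bauer's bounds, and the non-integer bound $\epsilon(X,L) \ge \sqrt{(L^{2})/2}$ package all the real content, and the corollary is just the arithmetic of combining them.
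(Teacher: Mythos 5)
Your treatment of the two forward implications is correct and is essentially the paper's own argument: the paper phrases it as monotonicity of $\alpha(t)=(L^{2})(t-p-2)-(p+2)t^{2}$ on the interval between the Seshadri lower bound and $\sqrt{(L^{2})}$, which is the same convexity observation as your remark that $q<0$ on an interval containing both $\epsilon_{0}$ and $\sqrt{(L^{2})}\ge\epsilon(X,L)$; and the arithmetic $q\bigl(\sqrt{(L^{2})/8}\bigr)<0 \Longleftrightarrow L^{2}>\tfrac{81}{8}(p+2)^{2}$, $q\bigl(\sqrt{(L^{2})/2}\bigr)<0 \Longleftrightarrow L^{2}>\tfrac{9}{2}(p+2)^{2}$ checks out. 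One caveat for part (2): the bound $\epsilon(X,L)\ge\sqrt{(L^{2})/2}$ is not available as a citation --- it is part of what the corollary's proof must establish. The paper derives it inside the proof from Proposition \ref{prop:Seshadri surface}: since $\epsilon(X,L)\notin\mathbb{Z}$, the Seshadri exceptional curve has multiplicity $q\ge 2$ at a very general point, so $\Delta_{x}(L)$ is contained in the triangle with vertices $(0,0)$, $(\tfrac{t}{q},\tfrac{t}{q})$, $(\tfrac{t}{q-1},0)$, and comparing areas gives $\epsilon(X,L)^{2}\ge\tfrac{q-1}{q}(L^{2})\ge\tfrac{1}{2}(L^{2})$. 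You should supply this step rather than treat it as an external input.

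The genuine gap is the converse clause in (1). Your plan is to combine the equivalence of Theorem \ref{thm:large} with ``the matching upper Seshadri bound'' and square once; but the only upper bound available for very general surfaces is $\epsilon(X,L)\le\sqrt{(L^{2})}$, and $\sqrt{(L^{2})}$ already exceeds the Theorem \ref{thm:large} threshold whenever $L^{2}>4(p+2)^{2}$, so the inequality runs in the wrong direction and no lower bound on $L^{2}$ can be extracted this way. Concretely, a very general surface of type $(1,10)$ has $L^{2}=20>(0+2)(0+3)^{2}$ and $\epsilon(X,L)=40/9$ by Lemma \ref{lem:Seshadri constant with one}, hence satisfies $N_{0}$ by Theorem \ref{thm:large}, while $20<\tfrac{81}{8}\cdot 4$; so the literal implication ``$N_{p}$ implies $L^{2}>\tfrac{81}{8}(p+2)^{2}$'' is not obtainable by this route. (The paper disposes of this clause with a one-line appeal to Theorem \ref{thm:large}, so the intended content of ``the converse'' is evidently just the Seshadri-constant characterization of property $N_{p}$ furnished by that theorem, not a numerical lower bound on $L^{2}$; your sketch as written promises more than can be delivered.)
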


\textbf{Notation and convention.} 
In this paper, we work over the complex number field and a divisor means an integral Cartier divisor. For an effective $\mathbb{Q}$-divisor $D$ on a smooth projective variety $X$, the notation $\mathcal{J}(X,D)$ stands for the multiplier ideal sheaf associated to $D$ (cf. \cite[Definition 9.2.1]{L}). \\

\textbf{Organization of the paper.}
In Section \ref{section:preliminary}, we recall some results and techniques of infinitesimal Newton-Okounkov bodies that are useful in approaching higher syzygies on polarized abelian surfaces. Section \ref{section:main} is devoted to proving Theorem \ref{thm:main} and Theorem \ref{thm:large}. We also check the higher embeddings and the Koszul property of polarized abelian surfaces. Section \ref{section:application} deals with many applications of the results in Section \ref{section:main}. \\

\textbf{Acknowledgements.}
I would like to thank my advisor Yongnam Lee, for his advice, encouragement and teaching. This work was supported by NRF(National Research Foundation of Korea) Grant funded by the Korean Government(NRF-2016-Fostering Core Leaders of the Future Basic Science Program/Global Ph.D. Fellowship Program) and was partially supported by the grant 346300 for IMPAN from the Simons Foundation and the matching 2015-2019 Polish MNiSW fund. 
\end{section}

\begin{section} {Higher syzygies and infinitesimal Newton-Okounkov bodies} \label{section:preliminary}

Throughout this section, $X$ is a smooth projective variety of dimension $n$, and $x \in X$ is a point. Let $\pi:{\rm{Bl}}_{x}(X)=X' \rightarrow X$ be the blow-up of $X$ at $x$ with the exceptional divisor $E$. 

\begin{subsection} {Infinitesimal Newton-Okounkov bodies}
See \cite{LM} for definition and basic properties of Newton-Okounkov bodies. We start by defining the infinitesimal Newton-Okounkov bodies. 

\begin{defn}
Let $L$ be a big divisor on $X$. The infinitesimal Newton-Okounkov body of $L$ over $x$ is defined to be the Newton-Okounkov body $\Delta_{X'_{\bullet}}(\pi^{*}L)$ associated to a flag $X'_{\bullet}: X' \supseteq E \supseteq X'_{2} \supseteq \cdots \supseteq X'_{n}=\{z\}$, where $X'_{i} \cong \mathbb{P}^{n-i}$ is a linear subspace of $E \cong \mathbb{P}^{n-1}$. Furthermore, if $x$ and $X'_{\bullet}$ are chosen to be very general, we call it the generic infinitesimal Newton-Okounkov body. We denote it by $\Delta_{x}(L)$. 
\end{defn}

\begin{rmk} 
\begin{enumerate}[(1)]
\item By \cite[Proposition 5.3]{LM}, $\Delta_{x}(L)$ is well-defined. 
\item For notational convenience, when $X$ is a surface and $L$ is a big line bundle on $X$, we denote the infinitesimal Newton-Okounkov body of $L$ associated to a flag $X'_{\bullet}:X' \supseteq E \supseteq \{z\}$ by $\Delta_{(E,z)}(\pi^{*}L)$. 
\end{enumerate} 
\end{rmk}

We recall the inverted standard simplex and the largest inverted simplex constant of a divisor on a surface which play a vital role in local positivity. Note that the following definition of the largest inverted simplex constant makes sense by \cite[Theorem 4.1]{KL1507} and \cite[Proposition 4.6]{KL1507}. For higher-dimensional one, see \cite[Defintion 2.5]{KL1507} and \cite[Definition 4.4]{KL1507}. 

\begin{defn}
The inverted standard simplex of length $\xi$ is defined to be 
\begin{align*}
\Delta_{\xi}^{-1}:=\{(t,y) \in \mathbb{R}^{2}|\text{ } 0 \le t \le \xi, 0 \le y \le t\}.
\end{align*} 
Moreover, if $L$ is an ample line bundle on a smooth projective surface $X$, the largest inverted simplex constant is then defined as  
\begin{align*}
\xi(L;x):={\rm sup}\{\xi>0| \text{ } \Delta_{\xi}^{-1} \subseteq \Delta_{(E,z)}(\pi^{*}L)\}.
\end{align*}
If $L$ is big but not ample, we may let $\xi(L;x)=0$. 
\end{defn}

\begin{note}
From now on, we denote by $\mu(L,x):={\rm sup}\{t>0 |\text{ } \pi^{*}L-tE \text{ is big}\}$, where $L$ is a big line bundle on $X$. 
\end{note}

Next, we quickly recall a few notions and useful facts without proof. 

\begin{prop} {\rm (\cite[Proposition 3.1]{KL1411})} \label{prop:base}
Let $L$ be a big line bundle on a smooth projective surface $X$. 
\begin{enumerate}[(1)]
\item $\Delta_{(E,z)}(\pi^{*}L) \subseteq \Delta_{\mu(L,x)}^{-1}$ for any $z \in E$. 
\item There exist finitely many points $z_{1}, \dots, z_{k} \in E$ such that $\Delta_{(E,z)}(\pi^{*}L)$ is independent of $z \in E-\{z_{1}, \dots, z_{k}\}$, with base the whole line segment $[0, \mu(L,x)] \times \{0\}$. 
\end{enumerate}
\end{prop}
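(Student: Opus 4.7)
The plan is to analyze the infinitesimal Newton--Okounkov body one slice at a time, using the Zariski decomposition $\pi^{*}L - tE = P_{t} + N_{t}$ on the surface $X'$ for $t$ ranging over $[0,\mu(L,x))$, where $\pi^{*}L - tE$ is big.

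For part (1), I would argue directly. A rational point $(t,y) \in \Delta_{(E,z)}(\pi^{*}L)$ arises, for some large divisible $m$, from a section $s \in H^{0}(X', m\pi^{*}L)$ with $\text{ord}_{E}(s) = tm$, via $y = \frac{1}{m}\text{ord}_{z}(s')$ where $s' := (s/s_{E}^{tm})|_{E}$. Since $s'$ is a nonzero section of the line bundle $(m\pi^{*}L - tmE)|_{E}$ on $E \cong \mathbb{P}^{1}$, which has degree $m(\pi^{*}L - tE) \cdot E = -tm(E^{2}) = tm$ using $\pi^{*}L \cdot E = 0$ and $E^{2} = -1$, the vanishing order of $s'$ at $z$ is at most $tm$. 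Dividing by $m$ gives $y \leq t$, and taking closure yields $\Delta_{(E,z)}(\pi^{*}L) \subseteq \Delta_{\mu(L,x)}^{-1}$.

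For part (2), I would combine the finiteness of Zariski chambers on the one-parameter family $\{\pi^{*}L - tE\}_{t}$ with the fact that nef line bundles on $\mathbb{P}^{1}$ are globally generated. For each rational $t$ and sufficiently divisible $m$, the slice at height $t$ is read off from the image of the restriction map
\[
\rho_{t,m} : H^{0}\bigl(X', m(\pi^{*}L - tE)\bigr) \longrightarrow H^{0}\bigl(E, m(\pi^{*}L - tE)|_{E}\bigr),
\]
after pulling out the fixed contribution coming from $mN_{t}$. The base locus on $E$ of this restricted linear series is the union of $\mathrm{supp}(N_{t}) \cap E$ together with the base locus of $P_{t}|_{E}$; but since $P_{t}$ is nef and $E \cong \mathbb{P}^{1}$, the bundle $P_{t}|_{E}$ of nonnegative degree is base-point-free. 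By the finiteness of Zariski chamber decompositions on a surface, the support of $N_{t}$ takes only finitely many values as $t$ varies in $[0,\mu(L,x))$, so its intersections with $E$ contribute only finitely many candidate "bad points" $z_{1},\dots,z_{k} \in E$. For $z \notin \{z_{1},\dots,z_{k}\}$ and every rational $t$ in the range, one obtains a section in the image of $\rho_{t,m}$ that is nonvanishing at $z$, producing $(t,0)$ in the body; closure yields the full base segment $[0,\mu(L,x)] \times \{0\}$. Independence of $\Delta_{(E,z)}(\pi^{*}L)$ from $z$ outside this finite set follows because the slice length at each height depends only on $P_{t}\cdot E$, a quantity independent of $z$.

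The main obstacle is the bookkeeping in part (2) when $E$ itself appears as a component of $N_{t}$: then every section-derived first coordinate exceeds $t$, and one must relate the slicing to a shifted parameter before invoking the base-point-free argument on $E$. I would dispatch this by using the finiteness of Zariski chambers, verifying that the subtlety introduces no further bad points beyond the $z_{j}$ already identified. A secondary minor point is justifying convexity-closure passage from rational $(t,0)$ to the full closed segment, which is immediate from the convexity of Newton--Okounkov bodies.
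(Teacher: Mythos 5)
The paper does not actually prove this proposition: it is quoted verbatim from K\"uronya--Lozovanu \cite[Proposition 3.1]{KL1411}, and your argument reconstructs essentially the proof given there, namely the degree bound $(\pi^{*}L-tE)\cdot E=t$ on $E\cong\mathbb{P}^{1}$ for part (1), and for part (2) the Lazarsfeld--Musta\c t\u a description of surface Okounkov bodies through the Zariski decompositions $\pi^{*}L-tE=P_{t}+N_{t}$, where the slice at height $t$ is $[\mathrm{ord}_{z}(N_{t}|_{E}),\,\mathrm{ord}_{z}(N_{t}|_{E})+P_{t}\cdot E]$, combined with finiteness of the curves appearing in the $N_{t}$. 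The one point you should nail down is that $\bigcup_{0\le t<\mu(L,x)}\mathrm{Supp}(N_{t})$ is finite: bare local finiteness of Zariski chambers applies to compact subsets of the big cone, whereas the segment ends on its boundary, so one should instead use that $\mathrm{Supp}(N_{t})$ is nondecreasing in $t$ and that the number of components of any negative part is bounded by the Picard number of $X'$. Your worry about $E$ itself occurring in $N_{t}$ is vacuous: a positive coefficient of $E$ in $N_{t}$ for some $t<\mu(L,x)$ would force the first-coordinate projection of the body to omit an open interval, contradicting convexity together with the fact that this projection is all of $[0,\mu(L,x)]$.
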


\begin{defn}
Let $L$ be a big divisor on a smooth projective variety $X$, and let $x \in X$ be a smooth point such that $x \notin {\rm \textbf{B}_{+}}(L)$. The real number 
\begin{align*}
\epsilon(\lVert L \rVert ; x):=\sup_{f^{*}D=A+E} \epsilon(A;x),
\end{align*}
is the moving Seshadri constant of $L$ at $x$. The supremum in the definition is taken over all projective morphisms $f:Y \rightarrow X$ with $Y$ smooth and $f$ an isomorphism around $x$, and over all decompositions $f^{*}D=A+E$, where $A$ is ample and $E$ is effective with $f^{-1}(x) \notin {\rm Supp}(E)$. \

Moreover, if $x \in {\rm \textbf{B}_{+}}(L)$, we may let $\epsilon(\lVert L \rVert ; x)=0$. 
\end{defn}

\begin{prop} {\rm (\cite[Corollary 4.11]{KL1507})} \label{prop:moving Seshadri constant}
Let $L$ be a big divisor on a smooth projective variety $X$. Then
\begin{align*}
\xi(L;x)=\epsilon(\lVert L \rVert;x)
\end{align*}
for any $x \in X$, where $\xi(L;x)$ is the largest inverted simplex constant {\rm (cf. \cite[Definition 4.4]{KL1507})}.
\end{prop}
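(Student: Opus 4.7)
The plan is to establish the two inequalities $\xi(L;x)\le\epsilon(\lVert L\rVert;x)$ and $\epsilon(\lVert L\rVert;x)\le\xi(L;x)$ separately, by translating between the combinatorial/valuative data packaged in the infinitesimal Newton--Okounkov body and the birational-positivity data encoded in the moving Seshadri constant. Throughout, I would freely pass to the blow-up $\pi:X'\to X$ and work with $\pi^{*}L-tE$ as $t$ varies.

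For the inequality $\xi(L;x)\le\epsilon(\lVert L\rVert;x)$, suppose $\Delta_{\xi}^{-1}\subseteq\Delta_{(E,z)}(\pi^{*}L)$ for some $\xi>0$, and fix a rational $t\in(0,\xi)$. I would pick $m$ large and divisible so that the lattice points $(mt,0)$ and $(mt,mt)$ lie in $m\Delta_{(E,z)}(\pi^{*}L)$, which by the definition of the NO body yields sections of $m\pi^{*}L$ whose valuations along the flag realise these values; in particular $\pi^{*}L-tE$ is big, with effective divisors vanishing to the prescribed order at $z\in E$. Combining this with a Fujita-type approximation of $\pi^{*}L-tE$ produces, after a further birational modification $f:Y\to X$ isomorphic near $x$, an ample-plus-effective decomposition $f^{*}L=A+E'$ with $f^{-1}(x)\notin\mathrm{Supp}(E')$ and $\epsilon(A;x)$ arbitrarily close to $t$. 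Letting $t\nearrow\xi$ then gives the desired inequality.

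For the reverse inequality $\epsilon(\lVert L\rVert;x)\le\xi(L;x)$, I would start from an arbitrary decomposition $f^{*}L=A+E'$ with $A$ ample and $f^{-1}(x)\notin\mathrm{Supp}(E')$, and set $t:=\epsilon(A;x)-\eta$ for small $\eta>0$. Because $A$ is ample with $\epsilon(A;x)>t$, asymptotic jet separation at $x$ produces, for every large $m$, sections of $mA$ on $Y$ with prescribed vanishing profile at $x$ and along any chosen tangent direction; transporting these up through the blow-up of $Y$ at $x$ shows that the inverted simplex $\Delta_{t}^{-1}$ is contained in the infinitesimal Newton--Okounkov body of $f^{*}A$ with respect to the flag through $f^{-1}(x)$. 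Adding back the effective part $E'$, whose support misses $f^{-1}(x)$, only enlarges the NO body of $f^{*}L$ in a way compatible with the flag, so $\Delta_{t}^{-1}\subseteq\Delta_{(E,z)}(\pi^{*}L)$ for an appropriate choice of $z$, yielding $\xi(L;x)\ge t$ and hence the inequality on supremum.

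The main obstacle is the forward direction: the Newton--Okounkov body only records numerical and valuative information, and extracting from it an honest ample-plus-effective decomposition that witnesses the moving Seshadri constant requires a delicate limit/approximation argument. On a surface one can shortcut this by invoking the explicit Zariski-decomposition description of the slices of $\Delta_{(E,z)}(\pi^{*}L)$ due to K\"uronya--Lozovanu: the inverted simplex condition becomes equivalent to $N_{t}\cdot E=0$ together with a lower bound on $P_{t}\cdot E$, both of which translate directly into the existence of ample decompositions realising $\epsilon(\lVert L\rVert;x)\ge \xi(L;x)$. In higher dimensions one would need to replace Zariski decomposition by restricted volumes and augmented base loci, which is where the bulk of the technical work lies.
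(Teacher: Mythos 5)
This proposition is not proved in the paper at all: it is quoted verbatim from K\"uronya--Lozovanu (\cite[Corollary 4.11]{KL1507}) in a list of ``facts recalled without proof,'' so your sketch can only be measured against the source. Your easy direction, $\epsilon(\lVert L\rVert;x)\le\xi(L;x)$, is essentially correct and matches the standard argument: for an ample $A$ with $\epsilon(A;x)>t$ the divisor $\pi^{*}A-tE$ is nef, so the Zariski slices of $\Delta_{(E,z)}(\pi^{*}A)$ at heights $\le t$ are the full segments $[0,t']$, and the effective part $E'$ contributes valuation $(0,0)$ along the flag because its support misses $f^{-1}(x)$, so it only enlarges the body. (You do need a word on why the infinitesimal body computed on a higher model $Y$ agrees with $\Delta_{(E,z)}(\pi^{*}L)$ on $\mathrm{Bl}_{x}(X)$, but since $f$ is an isomorphism near $x$ the flags and graded linear series match up.)

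The forward direction as you wrote it has a genuine gap. From the containment $\Delta_{\xi}^{-1}\subseteq\Delta_{(E,z)}(\pi^{*}L)$ you extract only that $\pi^{*}L-tE$ is big for $t<\xi$ together with sections realizing the two vertices $(mt,0)$ and $(mt,mt)$; this is strictly weaker information than the containment itself (bigness of $\pi^{*}L-tE$ for all $t<\xi$ only says $\mu(L,x)\ge\xi$, and $\mu(L,x)$ routinely exceeds the Seshadri constant). Fujita approximation of $\pi^{*}L-tE$ then gives an ample-plus-effective decomposition with the right \emph{volume}, but with no control on whether the effective part passes through the preimage of $x$ and no lower bound on $\epsilon(A;x)$ --- which is exactly what the moving Seshadri constant requires. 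What \cite{KL1507} actually does is route both inequalities through asymptotic jet separation, using $\epsilon(\lVert L\rVert;x)=\lim_{k}s(kL;x)/k$ and reading off the number of separated jets from the full slices of the infinitesimal body. Your closing ``Zariski shortcut'' is the right repair in the only case this paper uses (an ample $L$ on a surface): the containment forces $N_{t}\cdot E=0$ and $P_{t}\cdot E\ge t$ for $t<\xi$, and if $\pi^{*}L-tE$ failed to be nef the Seshadri-exceptional curve would sit in $N_{t}$, meet $E$ positively, and shrink the slice length $P_{t}\cdot E=t-N_{t}\cdot E$ below $t$; so $\pi^{*}L-tE$ is nef for all $t<\xi$ and $\epsilon(L;x)\ge\xi$. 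But as a proof of the stated result (big $L$, arbitrary dimension) the sketch is incomplete without the jet-separation machinery.
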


The following proposition shows that the Seshadri constant of an ample line bundle at a very general point restricts the shape of its generic infinitesimal Newton-Okounkov body roughly. 

\begin{prop} {\rm (\cite[Proposition 4.2]{KL1411})} \label{prop:Seshadri surface}
Let $L$ be an ample integral Cartier divisor on a smooth projective surface $X$, and let $x \in X$ be a very general point. Assume that $\epsilon(L;x)$ is submaximal. Then there exists a Seshadri exceptional curve $F \subset X$ with $(L.F)=p$ and ${\rm mult}_{x}(F)=q$ such that $\epsilon(X,L)=\frac{p}{q}$. Moreover, 
\begin{enumerate}[(1)]
\item if $q \ge 2$, then $\Delta_{x}(L) \subseteq \Delta_{OAB}$, where $O=(0,0)$, $A=(\frac{p}{q}, \frac{p}{q})$, and $B=(\frac{p}{q-1}, 0)$. 
\item if $q=1$, then $\Delta_{x}(L)$ is contained in the area below the line $y=t$, and between the lines $y=0$ and $y=\epsilon(X,L)$. 
\end{enumerate}
\end{prop}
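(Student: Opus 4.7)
The plan is to prove the proposition in two stages: first the existence and numerical properties of the Seshadri exceptional curve, then the shape of the generic infinitesimal Newton-Okounkov body via a Zariski-decomposition computation on the blow-up $X'$.

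For the first stage, I would invoke the classical existence theorem for Seshadri exceptional curves at very general points (Ein-K\"uchle-Lazarsfeld, Bauer, and others): submaximality of $\epsilon(L;x)$ forces the existence of an irreducible curve $F \subset X$ through $x$ with $\epsilon(X,L) = (L \cdot F)/\mathrm{mult}_{x}(F) =: p/q$. I would then establish the key numerical constraint $F^{2} \ge q(q-1)$. Since $x$ is very general, $F = F(x)$ deforms with $x$, so the family of Seshadri exceptional curves dominates $X$; a dimension count comparing the dimension of the linear family containing $F$ with the $\binom{q+1}{2}$ conditions imposed by prescribing a point of multiplicity $q$ then forces $F^{2} \ge q(q-1)$.

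For the second stage, I would apply the Lazarsfeld-Mustata / K\"uronya-Lozovanu-Maclean description of Newton-Okounkov bodies on surfaces via Zariski decomposition. For the flag $X' \supset E \supset \{z\}$ with $z$ very general on $E$, and writing $\pi^{*}L - tE = P_{t} + N_{t}$ for the Zariski decomposition, the slice of $\Delta_{(E,z)}(\pi^{*}L)$ at height $t$ is the interval $[\beta(t),\alpha(t)] = [0, P_{t}\cdot E]$. For $t \in [0, p/q]$, the divisor $\pi^{*}L - tE$ is nef, giving slice $[0,t]$ and producing the triangle edge $OA$. For $t > p/q$, the strict transform $\widetilde{F} = \pi^{*}F - qE$ satisfies $(\pi^{*}L - tE) \cdot \widetilde{F} = p - tq < 0$ and so enters $N_{t}$. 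Writing $N_{t} = c_{t}\widetilde{F}$ and imposing $P_{t} \cdot \widetilde{F} = 0$, one computes
\begin{align*}
c_{t} = \frac{tq-p}{q^{2} - F^{2}}, \qquad \alpha(t) = t - c_{t}q = \frac{pq - tF^{2}}{q^{2} - F^{2}}.
\end{align*}
A direct manipulation shows that the desired bound $\alpha(t) \le p - (q-1)t$ is equivalent to $(F^{2} - q(q-1))(p - tq) \le 0$, which holds for $t > p/q$ precisely because of the inequality $F^{2} \ge q(q-1)$ obtained in the first stage; together with $\alpha(t) \ge 0$, this confines $\Delta_{x}(L)$ to the triangle $\Delta_{OAB}$, giving case (1). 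For $q=1$, the analogous computation with $F$ smooth at $x$ and $F^{2} \ge 0$ yields $\alpha(t) \le \epsilon(X,L)$ for $t > \epsilon(X,L)$, which is case (2).

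The main obstacle will be the first stage, namely producing the sharp inequality $F^{2} \ge q(q-1)$. This is essentially a deformation-theoretic statement about the universal family of Seshadri exceptional curves above $X$, and the bookkeeping must use the very general hypothesis on $x$ in an essential way. A secondary technical point in the second stage is verifying that $\widetilde{F}$ is the \emph{unique} component of $N_{t}$ throughout the range $(p/q, p/(q-1)]$; this is what causes the body to turn from edge $OA$ to edge $AB$ at the vertex $A=(p/q,p/q)$ and to terminate at $B=(p/(q-1),0)$.
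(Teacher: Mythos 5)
This proposition is quoted in the paper without proof (it is \cite[Proposition 4.2]{KL1411}), so there is no internal argument to compare against; your reconstruction follows essentially the same route as the cited source. The Zariski-decomposition computation is correct: Hodge index plus submaximality give $F^{2}<q^{2}$, so $c_{t}=\frac{tq-p}{q^{2}-F^{2}}>0$ and $\alpha(t)=\frac{pq-tF^{2}}{q^{2}-F^{2}}$ for $t>p/q$, and the bound $\alpha(t)\le p-(q-1)t$ (the line through $A$ and $B$) is indeed equivalent to $(F^{2}-q(q-1))(p-tq)\le 0$. Two remarks. First, the key inequality $F^{2}\ge q(q-1)$ should not be justified by a naive count of the $\binom{q+1}{2}$ conditions imposed by a point of multiplicity $q$: the family of Seshadri exceptional curves is an algebraic family, not a linear system, and such a count does not directly bound $F^{2}$. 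The correct input is the Ein--Lazarsfeld lemma on a one-parameter family of irreducible curves carrying a \emph{moving} point of multiplicity $\ge q$ (see also \cite{TSBMAWT}); this is exactly what the very generality of $x$ buys, and it is what \cite{KL1411} invokes. Second, your worry about $\widetilde{F}$ being the \emph{unique} component of $N_{t}$ is unnecessary for a containment statement: for any effective $D\equiv\pi^{*}L-tE$ not containing $E$, intersecting with $\widetilde{F}$ shows that the coefficient of $\widetilde{F}$ in $D$ is at least $c_{t}$, and since a general $z\in E$ avoids $\widetilde{F}\cap E$ one gets $\nu_{z}(D|_{E})\le t-c_{t}q=\alpha(t)$ regardless of what else the negative part contains; this also forces the body to terminate by $t=p/(q-1)$, since $\alpha(t)\ge 0$ fails beyond that point. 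With the first point repaired by the correct citation, the argument is complete, including the case $q=1$, where $0\le F^{2}<1$ forces $F^{2}=0$ and hence $\alpha(t)\equiv\epsilon(X,L)$ for $t>\epsilon(X,L)$.
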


\end{subsection}

\begin{subsection} {Higher syzygies using the infinitesimal Newton-Okounkov bodies}
In \cite{LPP} and \cite{KL1509}, they studied higher syzygies of a polarized abelian surface by using the infintesimal Newton-Okounkov bodies. From now on, we further assume that $X$ is an abelian surface. 

\begin{thm} {\rm (\cite{LPP} or \cite[Theorem 3.1]{KL1509})} \label{thm:LPP1} 
Let $(X,L)$ be a polarized abelian surface, and let $p \ge 0$ be an integer such that there exists an effective $\mathbb{Q}$-divisor $F_{0}$ on $X$ such that 
\begin{enumerate}[(1)]
\item $F_{0} \equiv \frac{1-c}{p+2}L$ for some $0<c<1$, and
\item $\mathcal{J}(X,F_{0})=\mathcal{I}_{0}$, the maximal ideal at the origin.
\end{enumerate}
Then $L$ satisfies property $N_{p}$.
\end{thm}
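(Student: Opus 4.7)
The plan is to invoke the Pareschi--Popa criterion for property $N_p$ on a polarized abelian variety, which reduces the problem to verifying that the twisted ideal sheaf $L \otimes \mathcal{I}_{\{y_0, \ldots, y_{p+1}\}}$ is $IT_0$ for generic $(p+2)$-tuples of points $y_0, \ldots, y_{p+1} \in X$. The hypothesis supplies such a multiplier ideal at a single point; translating around the abelian surface and then summing $p+2$ translates will produce an effective $\mathbb{Q}$-divisor whose multiplier ideal encodes the whole configuration, and an ample residue to which Nadel vanishing applies.

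First, for each $y \in X$ I would set $F_y := t_{-y}^* F_0$, where $t_y$ denotes translation. Since $t_y^* L \equiv L$ numerically on an abelian variety, $F_y$ is effective with $F_y \equiv \frac{1-c}{p+2} L$, and by functoriality of multiplier ideals under isomorphisms $\mathcal{J}(X, F_y) = \mathcal{I}_y$. For generic distinct $y_0, \ldots, y_{p+1}$, form the sum $\Delta := \sum_{i=0}^{p+1} F_{y_i}$, so $\Delta \equiv (1-c)L$ and hence $L - \Delta \equiv cL$ is ample. Subadditivity of multiplier ideals yields
\[
\mathcal{J}(X, \Delta) \subseteq \mathcal{J}(X, F_{y_0}) \cdots \mathcal{J}(X, F_{y_{p+1}}) = \mathcal{I}_{y_0} \cdots \mathcal{I}_{y_{p+1}} = \mathcal{I}_{\{y_0, \ldots, y_{p+1}\}},
\]
and for a generic configuration this inclusion has zero-dimensional cokernel. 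Nadel vanishing applied to the ample class $L - \Delta$ (using $K_X = 0$) and twisted by an arbitrary $\alpha \in \mathrm{Pic}^{0}(X)$ gives $H^i(X, L \otimes \mathcal{J}(X,\Delta) \otimes \alpha) = 0$ for all $i > 0$; the zero-dimensional quotient propagates this vanishing through the inclusion, showing that $L \otimes \mathcal{I}_{\{y_0, \ldots, y_{p+1}\}}$ is $IT_0$. By the Pareschi--Popa criterion, $L$ then satisfies property $N_p$.

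The main obstacle I expect lies in the multiplier-ideal book-keeping of the second step: subadditivity delivers the containment for free, but one must verify that for generic $y_0, \ldots, y_{p+1}$ the inclusion $\mathcal{J}(X, \Delta) \subseteq \mathcal{I}_{\{y_0, \ldots, y_{p+1}\}}$ has only zero-dimensional cokernel---equivalently, that the translated divisors $F_{y_i}$ are mutually transverse near their respective base points. This is where the precise hypothesis $\mathcal{J}(X, F_0) = \mathcal{I}_0$ (rather than something larger) and the freedom to move the $y_i$ independently are essential. Once this local verification is secured, translation invariance on the abelian variety, Nadel vanishing for the ample residue $cL$, and the Pareschi--Popa reduction from $IT_0$ to $N_p$ are essentially formal, and the proof closes.
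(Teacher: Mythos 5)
Your construction of the translated divisors $F_{y_i}$, the subadditivity computation, and the Nadel vanishing for the ample residue $cL$ are all fine as far as they go, but the argument rests on a reduction that is not established: there is no Pareschi--Popa criterion asserting that property $N_p$ follows once $L\otimes\mathcal{I}_{\{y_0,\dots,y_{p+1}\}}$ is $IT_0$ for a \emph{generic} $(p+2)$-tuple of distinct points, and you neither prove such a statement nor give a precise reference. The criterion actually used here (due to Green--Inamdar, and the one the paper's sketch records) is the vanishing
\begin{align*}
H^{1}\bigl(X^{\times (p+2)}, \overset{p+2}\boxtimes L \otimes \mathcal{I}_{\Sigma}\bigr)=0, \qquad \Sigma=\Delta_{0,1}\cup\cdots\cup\Delta_{0,p+1},
\end{align*}
where $\Delta_{0,i}=\{x_0=x_i\}$; pushing forward to the first factor, this is the statement that $M_L^{\otimes(p+1)}\otimes L$ is $IT_0$, $M_L$ being the kernel of the evaluation map $H^{0}(L)\otimes\mathcal{O}_X\to L$, whose fibre over $x_0$ is $H^{0}(L\otimes\mathfrak{m}_{x_0})$. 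What must be controlled is therefore precisely the locus where the auxiliary points \emph{collide} with $x_0$, i.e.\ the diagonals --- the opposite of a generic configuration. A condition imposed only at generic reduced tuples cannot see this degeneration (compare: $(p+1)$-very ampleness, which quantifies over \emph{all} length-$(p+2)$ subschemes including non-reduced ones, is already strictly weaker than $N_p$). So the first step of your plan is a genuine gap, and the remaining steps are calibrated to feed a statement that has not been established.

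If you transport your strategy to the product $X^{\times(p+2)}$, where it belongs, you essentially recover the paper's proof: set $E=\sum_{i=1}^{p+1} d_{0i}^{*}F_0$ with $d_{0i}(x_0,\dots,x_{p+1})=x_0-x_i$, so that $\mathcal{J}(X^{\times(p+2)},E)=\mathcal{I}_\Sigma$ by \cite[Lemma 1.2]{LPP}, check via the Poincar\'e bundle that $\overset{p+2}\boxtimes L(-E)$ is ample, and apply Nadel vanishing. Note, however, that in that setting your ``zero-dimensional cokernel'' fallback is unavailable: subadditivity only gives the inclusion $\mathcal{J}(E)\subseteq\prod_i\mathcal{I}_{\Delta_{0,i}}$, and the diagonals $\Delta_{0,i}$ meet one another along positive-dimensional loci, so one genuinely needs the equality of multiplier ideals, which is exactly what the hypothesis $\mathcal{J}(X,F_0)=\mathcal{I}_0$ combined with the product structure of the pullbacks $d_{0i}^{*}F_0$ delivers through the cited lemma. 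This is also why the hypothesis demands equality with the maximal ideal rather than a mere containment.
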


Now, let us give a sketch of the proof of Theorem \ref{thm:LPP1}. The main idea is to prove the vanishing of 
\begin{align*}
H^{1}(X^{\times (p+2)}, \overset{p+2}\boxtimes L \otimes \mathcal{I}_{\Sigma}),
\end{align*}
where $\mathcal{I}_{\Sigma}$ is the ideal sheaf of the reduced algebraic set $\Sigma=\{(x_{0}, \dots, x_{p+1}) \in X^{\times (p+2)}| \text{ } x_{0}=x_{i} \text{ for some } 1 \le i \le p+1\}=\Delta_{0,1} \cup \cdots \cup \Delta_{0,p+1}$. By \cite[Lemma 1.2]{LPP} and techniques of multiplier ideal, they showed $\mathcal{I}_{\Sigma}=\mathcal{J}(X^{p+2},E)$ for some line bundle $E$. Also, by some computations using Poincar\'e bundle, the ampleness of $\overset{p+2}\boxtimes L (-E)$ is obtained. Then the Nadel vanishing gives the desired vanishing $H^{1}(X^{\times (p+2)}, \overset{p+2}\boxtimes L \otimes \mathcal{I}_{\Sigma})=0$. Since this implies property $N_{p}$ for $L$ (cf. \cite{I}), Theorem \ref{thm:LPP1} holds. 

On an abelian surface, the main theorem of \cite{LPP} can be rephrased as follows. 

\begin{thm} {\rm (\cite[Theorem A]{LPP})} \label{thm:LPP}
Let $(X,L)$ be a polarized abelian surface. Assume that 
\begin{align*}
\epsilon(X,L)>2(p+2).
\end{align*}
Then $L$ satisfies property $N_{p}$. 
\end{thm}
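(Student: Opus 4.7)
The plan is to deduce Theorem \ref{thm:LPP} as a direct application of Theorem \ref{thm:LPP1}: it suffices, for some $0 < c < 1$, to produce an effective $\mathbb{Q}$-divisor $F_{0}$ with $F_{0} \equiv \frac{1-c}{p+2} L$ and $\mathcal{J}(X, F_{0}) = \mathcal{I}_{0}$.

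First, I would calibrate $c$. Since $\epsilon(X,L) > 2(p+2)$, there exists $c \in (0,1)$ with $\tfrac{1-c}{p+2}\,\epsilon(X,L) > 2$. Writing $M := \tfrac{1-c}{p+2} L$, the $\mathbb{Q}$-ample class $M$ then satisfies $\epsilon(X,M) > 2 = \dim X$. By Proposition \ref{prop:moving Seshadri constant}, equivalently $\xi(M;0) > 2$, so the generic infinitesimal Newton-Okounkov body $\Delta_{0}(M)$ contains an inverted standard simplex $\Delta_{\xi}^{-1}$ of length $\xi > 2$.

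Second, I would convert this shape information into an effective divisor with a controlled singularity at the origin. By the standard dictionary between lattice points of Newton-Okounkov bodies and global sections (\cite{LM}), the inclusion $\Delta_{\xi}^{-1} \subseteq \Delta_{0}(M)$ with $\xi > 2$ yields, for sufficiently divisible $m$, a section of $\mathcal{O}_{X}(mM)$ vanishing to order $> 2m$ at the origin, hence an effective $\mathbb{Q}$-divisor $D \equiv M$ with $\mathrm{mult}_{0} D > 2$. Since $\mathrm{mult}_{0} D > \dim X$, the standard multiplier ideal estimate forces $\mathcal{J}(X, D) \subseteq \mathcal{I}_{0}$.

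Third, I would refine $D$ to upgrade this containment to the equality $\mathcal{J}(X, F_{0}) = \mathcal{I}_{0}$. Exploiting the translation action of the abelian surface $X$ on itself, the translates $\{t_{a}^{*} D\}_{a \in X}$ furnish a $2$-parameter family of effective $\mathbb{Q}$-divisors numerically equivalent to $M$. Following the averaging / generic-element construction of \cite[Lemma 1.2]{LPP}, I extract from this family an effective divisor whose multiplier ideal is cosupported only at $\{0\}$; after a small rescaling (which amounts to a mild perturbation of $c$ within $(0,1)$), the multiplicity at the origin can be held strictly above $2$ but close enough to $2$ that the ideal does not fall into $\mathcal{I}_{0}^{2}$, delivering the precise equality. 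Theorem \ref{thm:LPP1} then concludes. The main obstacle is exactly this last step: the multiplicity bound $\mathrm{mult}_{0} F_{0} > \dim X$ yields only containment, and upgrading it requires both suppressing extraneous cosupport elsewhere in $X$ (handled by Bertini on the translation family) and preventing the ideal from sinking into a higher power of $\mathcal{I}_{0}$ (handled by the numerical calibration of $c$). The abelian group structure is essential precisely here.
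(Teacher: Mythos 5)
Your reduction to Theorem \ref{thm:LPP1} is the right frame, and your first two steps are sound: $\epsilon(X,L)>2(p+2)$ gives $\epsilon(M;x)>2$ for $M=\frac{1-c}{p+2}L$ and suitable $c$; Proposition \ref{prop:moving Seshadri constant} turns this into $\xi(M;x)>2$; and since $\mu(M,x)\ge\xi(M;x)>2$, one does obtain an effective $\mathbb{Q}$-divisor $D\equiv M$ with $\mathrm{mult}_{0}D>2=\dim X$ and hence $\mathcal{J}(X,D)\subseteq\mathcal{I}_{0}$.

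The gap is in your last step, which is where the entire difficulty of \cite[Lemma 1.2]{LPP} lives. The multiplicity of $F_{0}$ at the origin does not control its multiplier ideal: a $\mathbb{Q}$-divisor on a surface with $\mathrm{mult}_{0}F_{0}=2+\delta$ can have $\mathcal{J}(X,F_{0})$ co-supported along a whole curve through $0$ (take $F_{0}=(2+\delta)H$ with $H$ smooth through $0$) or strictly contained in $\mathcal{I}_{0}^{2}$ (a sufficiently cuspidal branch), so ``multiplicity slightly above $2$'' neither suppresses positive-dimensional co-support nor keeps the ideal from sinking below $\mathcal{I}_{0}$. Likewise, averaging or moving to a general member of the translation family $\{t_{a}^{*}D\}$ lowers the multiplicity at $0$ as well, so the Bertini step would destroy the very inclusion $\mathcal{J}\subseteq\mathcal{I}_{0}$ you just established. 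The real content --- cutting the non-klt locus down to the single reduced point and then identifying the ideal as exactly $\mathcal{I}_{0}$ --- needs either the log-canonical-threshold and degree-estimate argument of \cite[Lemma 1.2]{LPP} itself, or, in the framework of this paper, the slice-length criterion: when $\epsilon(B;x)\ge 2$ for $B=\frac{1}{p+2}L$, the inverted simplex of length $2$ lies in $\Delta_{x}(B)$, so ${\rm length}(\Delta_{x}(B)\cap\{2\}\times\mathbb{R})=2>1$ and Theorem \ref{thm:infinitesimal Newton-Okounkov body} produces $F_{0}$ with $\mathcal{J}(X,F_{0})=\mathcal{I}_{0}$ outright. (Alternatively, the statement follows from Theorem \ref{thm:main} combined with the elementary bound $\epsilon(X,L)\le\sqrt{(L^{2})}$.) As written, your proposal either imports \cite[Lemma 1.2]{LPP} wholesale --- in which case nothing remains to prove --- or leaves its hardest step unestablished.
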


Combining Theorem \ref{thm:LPP} with \cite{TSBMAWT}, Lazarsfeld-Pareschi-Popa showed:

\begin{cor} {\rm (\cite[Corollary B]{LPP})} \label{cor:Seshadri LPP}
Let $(X,L)$ be a very general polarized abelian surface of type $(d_{1},d_{2})$. Assume that $d_{1}d_{2}>16(p+2)^{2}$. Then $L$ satisfies property $N_{p}$. 
\end{cor}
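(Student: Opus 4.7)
The plan is to derive Corollary \ref{cor:Seshadri LPP} as a direct consequence of Theorem \ref{thm:LPP} combined with the known lower bound on the Seshadri constant of a very general polarized abelian surface. The key point is that the hypothesis $d_1 d_2 > 16(p+2)^2$ is precisely tailored so that the Seshadri lower bound converts numerically into the inequality $\epsilon(X,L) > 2(p+2)$ that drives Theorem \ref{thm:LPP}.

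First, I would translate the type-theoretic hypothesis into a self-intersection inequality. For a polarized abelian surface $(X,L)$ of type $(d_1,d_2)$, one has $(L^2) = 2 d_1 d_2$, so the assumption $d_1 d_2 > 16(p+2)^2$ is equivalent to $(L^2) > 32(p+2)^2$. Next I would invoke the lower Seshadri bound cited in the introduction (due to Bauer \cite{T97} and further refined in \cite{TSBMAWT}): when $(X,L)$ is very general,
\begin{align*}
\epsilon(X,L) \;\ge\; \tfrac{1}{2\sqrt{2}}\sqrt{(L^2)}.
\end{align*}
Feeding in $(L^2) > 32(p+2)^2$ gives $\epsilon(X,L) > \frac{1}{2\sqrt{2}}\cdot 4\sqrt{2}\,(p+2) = 2(p+2)$.

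Finally, applying Theorem \ref{thm:LPP} to this strict inequality yields property $N_p$ for $L$. The entire argument is a short two-step chain of citations; there is no genuine obstacle beyond bookkeeping the normalization $(L^2) = 2d_1 d_2$ and checking that the factor $\frac{1}{2\sqrt{2}}$ from the Seshadri bound combines with the factor $\sqrt{32}$ to produce exactly $2$, so that the threshold $32(p+2)^2$ in the self-intersection matches the threshold $2(p+2)$ in the Seshadri constant. This transparency is presumably the reason the authors record it as a corollary rather than a separately proved theorem, and it also explains why the later improvement from $32$ to $\frac{81}{8}$ in Corollary \ref{cor:general criterion} requires the more flexible Theorem \ref{thm:introduction} in place of Theorem \ref{thm:LPP}.
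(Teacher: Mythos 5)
Your proposal is correct and is exactly the argument the paper intends: it records this corollary as the combination of Theorem \ref{thm:LPP} with the Seshadri lower bound $\epsilon(X,L)\ge \tfrac{1}{2}\sqrt{d_1 d_2}=\tfrac{1}{2\sqrt{2}}\sqrt{(L^2)}$ from \cite{T97}/\cite{TSBMAWT}, and your bookkeeping with $(L^2)=2d_1d_2$ and the strictness of the inequality is accurate.
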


\begin{thm} {\rm (\cite[Theorem 3.4]{KL1509})} \label{thm:infinitesimal Newton-Okounkov body}
Let $X$ be an abelian surface, and let $B$ be an ample $\mathbb{Q}$-divisor on $X$. Suppose that 
\begin{align*}
{\rm length}(\Delta_{(E,z)}(\pi^{*}B) \cap \{2\} \times \mathbb{R})>1,
\end{align*}
for some point $z \in E$. Then there exists an effective $\mathbb{Q}$-divisor $D \equiv (1-c)B$ for some $0<c<1$ such that $\mathcal{J}(X,D)=\mathcal{I}_{0}$ over the whole of $X$. 
\end{thm}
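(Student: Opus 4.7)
The plan is to translate the Newton-Okounkov length hypothesis into an explicit effective $\mathbb{Q}$-divisor on $X$ whose multiplier ideal at $0$ is $\mathcal{I}_{0}$, then globalise. Unpacking the definition of the infinitesimal NOB, the slice $\Delta_{(E,z)}(\pi^{*}B)\cap(\{2\}\times\mathbb{R})$ having length strictly greater than $1$ means that for some rational $y>1$ and every sufficiently divisible $m$, there exists $s\in H^{0}(X,mB)$ with $\pi^{*}s=x^{2m}\tilde{s}$ (where $x$ locally defines $E$) and such that $\tilde{s}|_{E}\in H^{0}(\mathbb{P}^{1},\mathcal{O}(2m))$ vanishes at $z$ to order $ym$. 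Setting $F:=\tfrac{1}{m}\operatorname{div}(s)$ yields an effective $\mathbb{Q}$-divisor $F\equiv_{\mathbb{Q}}B$ on $X$ with $\operatorname{mult}_{0}F=2$ whose strict transform $\tilde{F}\subset X'$ meets $E$ at $z$ with $\operatorname{mult}_{z}(\tilde{F}\cap E)=y>1$.

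Next, I build an iterated log resolution of $(X',2E+\tilde{F})$ by blowing up $z$ on $X'$ to obtain $\sigma\colon X''\to X'$, and, if $\tilde{F}$ is merely tangent to $E$ at $z$ rather than singular there, further blowing up the common direction point on the new exceptional divisor, and so on. After sufficiently many iterations one arrives at an exceptional divisor $E_{*}$ above $z$ whose coefficient $\alpha$ in the pullback of $F$ and whose discrepancy $\beta$ relative to $X$ satisfy $\alpha>\beta+1$. A concrete sample computation: for simple tangency $y=2$ (smooth $\tilde{F}$, tangent to $E$ with order $2$), two blow-ups suffice, and one verifies $\alpha=6$, $\beta=4$. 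For a small rational $c>0$ with $\lfloor(1-c)\alpha\rfloor=\beta+1$, the multiplier-ideal formula yields
\begin{align*}
\mathcal{J}(X,(1-c)F)_{0}=\bigl((\sigma\cdots\pi)_{*}\mathcal{O}(-E_{*})\bigr)_{0}=\mathcal{I}_{0,X,0},
\end{align*}
using iteratively the identity $\mu_{*}\mathcal{I}_{w}=\mathcal{I}_{\mu(w)}$ for a point $w$ contracted by a birational morphism $\mu$ on a surface (valid because a regular function on the base pulls back to a function which is constant on the contracted curve).

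To upgrade this stalk identity to the global statement $\mathcal{J}(X,(1-c)F)=\mathcal{I}_{0}$, I would make $F$ klt outside $0$ by a Bertini-type argument: the section $s$ in the first step is constrained only by infinitesimal conditions at $0$, so for $m$ large the residual linear subspace of $H^{0}(X,mB)$ is sufficiently mobile on $X\setminus\{0\}$, and a generic choice of such $s$ yields smooth zero locus off $0$. Then $(1-c)F$ has all component coefficients strictly less than $1$ with smooth support away from the origin, so it is automatically klt there.

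The main obstacle is the iterated blow-up analysis when $\tilde{F}$ is only tangent to $E$ at $z$: one must climb a tower of blow-ups, tracking $\alpha$ and $\beta$ at each stage, and verify that the tower terminates with $\alpha>\beta+1$ (so that a positive rational $c$ of slack exists) rather than at the exact threshold $\alpha=\beta+1$ (which would force $c=0$). The role of the strict inequality ``length $>1$'' is precisely to guarantee that the tangency or singularity of $\tilde{F}$ at $z$ is genuinely beyond the base case, ensuring both $\alpha>\beta+1$ at the terminating exceptional divisor and the positivity of $c$.
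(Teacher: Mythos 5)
The paper does not prove this statement; it is quoted verbatim from \cite[Theorem 3.4]{KL1509}, so there is no in-paper argument to measure you against. Judged on its own terms, your proposal has the right flavour for the local analysis at the origin but contains two genuine gaps. The first is the passage from the Okounkov-body hypothesis to a single divisor $F=\frac{1}{m}\mathrm{div}(s)$: the valuative data you extract --- $\mathrm{mult}_{0}F=2$ and $\mathrm{ord}_{z}(\tilde{F}|_{E})=y>1$ --- do not determine the local structure of $F$ at $0$, and not every $F$ with these properties works. For instance $F=2C$ with $C$ smooth through $0$ satisfies both conditions (with $y=2$), yet $\mathcal{J}(X,(1-c)F)=\mathcal{O}_{X}(-C)$ for $0<c\le\frac{1}{2}$, which is not $\mathcal{I}_{0}$. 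Your Bertini step cannot repair this: the subsystem of $|mB|$ cut out by the conditions $\mathrm{ord}_{E}(\pi^{*}s)\ge 2m$ and $\mathrm{ord}_{z}(\tilde{s}|_{E})\ge ym$ has its base locus exactly at $0$ and at $z$, which is precisely where the multiplier ideal is being computed, so a general member of that subsystem is not controlled there. One needs an additional mechanism --- splitting off an ample piece of $B$, or working with the positive part of the Zariski decomposition of $\pi^{*}B-2E$ --- to legislate the shape of $F$ near the origin.

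The second gap is the one you name yourself: the claim that the blow-up tower over $z$ always terminates at a divisor $E_{*}$ with $\alpha>\beta+1$ is asserted, not proved. Your sample computation ($\alpha=6$, $\beta=4$ for a single reduced smooth branch with contact order $2$) is correct, but it is one configuration among infinitely many: $\tilde{F}$ is a $\mathbb{Q}$-divisor and at $z$ it may have several branches, non-reduced components, or higher-order tangencies, with $y$ anywhere in $(1,2]$, and no uniform bookkeeping is offered. Moreover, producing one divisor $E_{*}$ with $\lfloor(1-c)\alpha\rfloor=\beta+1$ only shows that $\mathcal{O}(-E_{*})$ appears as a factor in the relevant pushforward; the equality $\mathcal{J}(X,(1-c)F)=\mathcal{I}_{0}$ additionally requires that no other divisor in the resolution (the intermediate exceptionals, the strict transforms of the components of $F$, and divisors lying over points of $E$ other than $z$) contributes more than $\mathcal{I}_{0}$ allows --- this is exactly what fails in the $F=2C$ example --- and that the pair is klt away from $0$. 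As written, the argument establishes the statement only in a special case; the general case, which is the actual content of the theorem, remains open.
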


\begin{rmk} 
Let $p \in \mathbb{Z}_{\ge 0}$, and consider a $\mathbb{Q}$-divisor $B=\frac{1}{p+2}L$ for an ample line bundle $L$ on $X$. If such $B$ satisfies the condition on Theorem \ref{thm:infinitesimal Newton-Okounkov body}, then property $N_{p}$ holds for $L$ by Theorem \ref{thm:LPP1}. Therefore Theorem \ref{thm:infinitesimal Newton-Okounkov body} provides a combinatorial way to approach property $N_{p}$ for a given ample line bundle by using the infinitesimal Newton-Okounkov bodies. 
\end{rmk}

%Finally, we see the Reider's celebrated result \cite{IR} and its natural generalization (\cite{KL1509}). 

%\begin{thm} {\rm (\cite{IR})} \label{thm:Reider}
%Let $(X,L)$ be a polarized abelian surface with $(L^{2}) \ge 5$. Then $L$ is globally generated if and only if there is no ellpitic curve $C \subset X$ with $(L.C)=1$ and $(C^{2})=0$. 
%\end{thm}

%\begin{thm} {\rm (\cite[Theorem 1.1]{KL1509})} \label{thm:Alex}
%Let $p \in \mathbb{Z}_{\ge 0}$ be an integer, and let $(X,L)$ be a polarized abelian surface with $(L^{2}) \ge 5(p+2)^{2}$. Then the following are equivalent. 
%\begin{enumerate}[(1)]
%\item $X$ does not contain an elliptic curve $C$ with $(C^{2})=0$ and $1 \le (L.C) \le p+2$. 
%\item $L$ satisfies property $N_{p}$. 
%\end{enumerate}
%\end{thm}

\end{subsection}

\begin{subsection} {Koszul property of section rings}
In \cite{LPP}, Lazarsfeld-Pareschi-Popa used the Seshadri constant to get the Koszul property of a given ample line bundle. The main ingredient is similar to Theorem \ref{thm:LPP}. We recall the definition of Koszul algebra. 

\begin{defn}
Let $R=\oplus_{i=0}^{\infty}R_{i}$ be a graded $k$-algebra with $R_{0}=k$, where $k$ is a ground field. Then $R$ is a Koszul $k$-algebra (or simply Koszul) if the trivial $R$-module $k$ admits a linear minimal graded free $R$-module resolution
\begin{align*}
\cdots \rightarrow R(-i)^{\oplus b_{i}} \rightarrow \cdots \rightarrow R(-1)^{\oplus b_{1}} \rightarrow R \rightarrow k \rightarrow 0. 
\end{align*} 
\end{defn}

\begin{prop} {\rm (\cite[Proposition 3.1]{LPP} or \cite[Proposition 3.6]{KL1509})} \label{prop:Koszul LPP} 
Let $(X,L)$ be a polarized abelian surface. Suppose that there exists an effective $\mathbb{Q}$-divisor $F_{0}$ such that 
\begin{enumerate}[(1)]
\item $F_{0} \equiv \frac{1-c}{3}L$ for some $0<c<1$, and 
\item $\mathcal{J}(X,F_{0})=\mathcal{I}_{0}$, the maximal ideal at the origin.
\end{enumerate}
Then $R(X,L)$ is Koszul. 
\end{prop}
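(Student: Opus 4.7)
The plan is to mirror the sketched proof of Theorem~\ref{thm:LPP1} given above, but on the triple product $X\times X\times X$ instead of $X^{\times(p+2)}$. By a Kempf–Inamdar-type criterion for Koszulity of section rings on abelian varieties (compare the criterion used for property $N_p$ in \cite{LPP}, following \cite{I}), $R(X,L)$ is Koszul whenever
\begin{align*}
H^1\bigl(X\times X\times X,\; L\boxtimes L\boxtimes L \otimes \mathcal{I}_\Sigma\bigr)=0,
\end{align*}
where $\Sigma:=\Delta_{01}\cup\Delta_{02}\subset X^3$ is the reduced union of two diagonals through the first factor. The coefficient $\tfrac{1}{3}$ in the hypothesis on $F_0$ is calibrated precisely so that pulling $F_0$ back along two independent difference maps fits inside one copy of $L$ in each of the three factors, with strict room to spare.

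First I would construct an effective $\mathbb{Q}$-divisor $E$ on $X^3$ with $\mathcal{J}(X^3,E)=\mathcal{I}_\Sigma$. For $i=1,2$ let $a_i\colon X^3\to X$ be the smooth difference map $(x_0,x_1,x_2)\mapsto x_i-x_0$; then $a_i^{-1}(0_X)=\Delta_{0i}$ scheme-theoretically, and since $a_i$ is smooth the birational transformation rule for multiplier ideals yields $\mathcal{J}(X^3,a_i^*F_0)=\mathcal{I}_{\Delta_{0i}}$. A direct sum/intersection argument of the type carried out in \cite[Lemma 1.2]{LPP} then shows that $E:=a_1^*F_0+a_2^*F_0$ satisfies $\mathcal{J}(X^3,E)=\mathcal{I}_{\Delta_{01}}\cap\mathcal{I}_{\Delta_{02}}=\mathcal{I}_\Sigma$.

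Next I would verify that the residual $\mathbb{Q}$-divisor $L\boxtimes L\boxtimes L-E$ is ample on $X^3$. Using the standard decomposition of the pull-back of a line bundle along a difference map on an abelian variety (via the Poincaré bundle), each $a_i^*L$ contributes a divisor whose numerical content in each factor is controlled by $L$. Since $E\equiv_{\mathbb{Q}}\tfrac{1-c}{3}\bigl(a_1^*L+a_2^*L\bigr)$, the coefficient of $L$ along each factor of the residual becomes at least $1-\tfrac{2(1-c)}{3}=\tfrac{1+2c}{3}>0$, which together with the remaining Poincaré contribution yields an ample class on $X^3$. The Nadel vanishing theorem then gives the required $H^1=0$, and hence Koszulity of $R(X,L)$.

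The main obstacle is the multiplier-ideal bookkeeping on $X^3$: arranging $E$ so that its multiplier ideal is exactly $\mathcal{I}_\Sigma$ rather than merely contained in it, and tracking the Poincaré-bundle corrections precisely enough to preserve ampleness after subtracting $E$. Both points are controlled by the numerical slack encoded in the hypothesis $0<c<1$ and the denominator $3$, and are the direct Koszul-analogue of \cite[Lemma 1.2]{LPP}.
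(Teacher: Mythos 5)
First, a bookkeeping remark: the paper does not prove this proposition at all --- it is quoted from \cite[Proposition 3.1]{LPP} (see also \cite[Proposition 3.6]{KL1509}) --- so there is no internal argument to compare against; I will judge your proposal on its own terms. It has a genuine gap at the very first step. The vanishing $H^{1}(X^{3}, L\boxtimes L\boxtimes L\otimes \mathcal{I}_{\Delta_{01}\cup\Delta_{02}})=0$ is exactly the Inamdar-type condition for property $N_{1}$ (it is the $p=1$ instance of the sketch of Theorem \ref{thm:LPP1}); it is not a criterion for Koszulness, and quadratic generation of the ideal does not imply that $R(X,L)$ is Koszul. Koszulness is a condition on \emph{every} homological degree of the minimal resolution of $\mathbb{C}$ over $R(X,L)$, and the geometric criterion actually used in \cite{LPP} is an infinite family of vanishings: for every $h\ge 1$ one needs $H^{1}(X, M_{L}^{\otimes h}\otimes L)=0$, where $M_{L}$ is the kernel of the evaluation map $H^{0}(L)\otimes\mathcal{O}_{X}\to L$, which translates into a vanishing on $X^{\times(h+1)}$ for the ideal of the \emph{chain} of consecutive diagonals $\Delta_{12}\cup\Delta_{23}\cup\cdots\cup\Delta_{h,h+1}$. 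No single vanishing on the triple product can capture all of these.

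The correct configuration also explains the uniform denominator $3$ better than your reading does. In the chain, each factor of $X^{\times(h+1)}$ lies on at most two of the diagonals, so the divisor $E_{h}=\sum_{i=1}^{h} d_{i,i+1}^{*}F_{0}$ (with $d_{i,i+1}$ the difference map of adjacent factors) costs at most $\tfrac{2(1-c)}{3}L<L$ in each factor \emph{independently of $h$}; had one used a star of $h$ diagonals through a single factor, the central factor would require a denominator growing with $h$, and the proposition as stated would fail to follow. Your star $\Delta_{01}\cup\Delta_{02}$ reproduces the right arithmetic only because it coincides with the $N_{1}$ count on $X^{3}$. The rest of your machinery --- smoothness of the difference maps and the transformation rule giving $\mathcal{J}(a_{i}^{*}F_{0})=\mathcal{I}_{\Delta_{0i}}$, the summation argument in the style of \cite[Lemma 1.2]{LPP}, the Poincar\'e-bundle check of ampleness of the residual class, and Nadel vanishing --- is the right toolkit and does go through, but it must be run for the whole family of chains on $X^{\times(h+1)}$, $h\ge 1$, not for one star on $X^{3}$.
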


\begin{rmk}
Note that Proposition \ref{prop:Koszul LPP} holds for abelian varieties of any dimension. 
\end{rmk}

On an abelian surface, \cite[Proposition 3.1]{LPP} can be rephrased as follows. 

\begin{prop} {\rm (\cite[Proposition 3.1]{LPP})} \label{prop:Koszul LPP2}
With notation as in Proposition \ref{prop:Koszul LPP}, assume that $\epsilon(X,L)>6$. Then $R(X,L)$ is Koszul. 
\end{prop}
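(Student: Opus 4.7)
The plan is to produce an effective $\mathbb{Q}$-divisor $F_{0} \equiv \tfrac{1-c}{3}L$ with $\mathcal{J}(X,F_{0}) = \mathcal{I}_{0}$ and then invoke Proposition \ref{prop:Koszul LPP}. The natural engine for manufacturing such an $F_{0}$ is Theorem \ref{thm:infinitesimal Newton-Okounkov body}, applied to the ample $\mathbb{Q}$-divisor $B := \tfrac{1}{3}L$. Writing $\pi : X' = \mathrm{Bl}_{0}(X) \to X$ for the blow-up at the origin with exceptional divisor $E$, that theorem outputs a divisor $D \equiv (1-c)B = \tfrac{1-c}{3}L$ with $\mathcal{J}(X,D) = \mathcal{I}_{0}$ as soon as we can exhibit some $z \in E$ for which the horizontal slice $\Delta_{(E,z)}(\pi^{*}B) \cap (\{2\}\times \mathbb{R})$ has length greater than $1$. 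So the whole problem reduces to checking this slice condition.

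The hypothesis $\epsilon(X,L) > 6$ is calibrated precisely for this. Since $B$ is ample, $\epsilon(X,B) = \tfrac{1}{3}\epsilon(X,L) > 2$, and by the translation invariance of the Seshadri constant on an abelian surface this equals $\epsilon(B;0)$; for the ample divisor $B$ the moving and ordinary Seshadri constants agree, so Proposition \ref{prop:moving Seshadri constant} gives
\begin{align*}
\xi(B;0) \;=\; \epsilon(\lVert B \rVert;0) \;=\; \epsilon(B;0) \;>\; 2.
\end{align*}
By the very definition of $\xi$, there is then some $\xi_{0}$ with $2 < \xi_{0} \le \xi(B;0)$ such that the inverted standard simplex $\Delta_{\xi_{0}}^{-1}$ is contained in $\Delta_{(E,z)}(\pi^{*}B)$ for the generic $z \in E$ supplied by Proposition \ref{prop:base}(2). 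Slicing $\Delta_{\xi_{0}}^{-1}$ at $t = 2$ yields the segment $\{2\}\times [0,2]$, of length $2 > 1$, and hence the same lower bound holds for the corresponding slice of $\Delta_{(E,z)}(\pi^{*}B)$. The hypothesis of Theorem \ref{thm:infinitesimal Newton-Okounkov body} is therefore verified, the divisor $F_{0} := D$ exists, and Proposition \ref{prop:Koszul LPP} concludes that $R(X,L)$ is Koszul.

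The one place where the abelian-variety structure is genuinely used (beyond the earlier results) is that the blow-up in Theorem \ref{thm:infinitesimal Newton-Okounkov body} must be taken at the \emph{origin}, since Proposition \ref{prop:Koszul LPP} demands that the multiplier ideal be the maximal ideal at $0$; translation invariance is what lets us transport the Seshadri bound to this specific point. Beyond that piece of bookkeeping, I expect no substantive obstacle: the argument is a direct assembly of Proposition \ref{prop:Koszul LPP}, Theorem \ref{thm:infinitesimal Newton-Okounkov body}, Proposition \ref{prop:moving Seshadri constant}, and the elementary observation that slicing an inverted simplex of length greater than $2$ at height $2$ gives a segment of length greater than $1$.
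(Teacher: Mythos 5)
Your argument is correct: with $B=\tfrac13 L$ the bound $\epsilon(X,L)>6$ gives $\xi(B;0)=\epsilon(B;0)>2$ by Proposition \ref{prop:moving Seshadri constant}, an inverted simplex $\Delta_{\xi_0}^{-1}$ with $\xi_0>2$ sits inside $\Delta_{(E,z)}(\pi^*B)$ for suitable $z$, its slice at $t=2$ has length $2>1$, and Theorem \ref{thm:infinitesimal Newton-Okounkov body} plus Proposition \ref{prop:Koszul LPP} finish the job. The one point worth flagging is that this is not how the cited source proves the statement: the paper records Proposition \ref{prop:Koszul LPP2} as a quotation of \cite[Proposition 3.1]{LPP}, whose original proof constructs the divisor $F_0$ with $\mathcal{J}(X,F_0)=\mathcal{I}_0$ directly from the Seshadri hypothesis via \cite[Lemma 1.2]{LPP} and multiplier-ideal/Poincar\'e-bundle computations, with no Newton--Okounkov bodies. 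Your route is instead exactly the mechanism this paper deploys later for its own strengthening (the case ``$\epsilon(X,L)\ge 2(p+2)$ forces $l=2>1$'' inside the proof of Theorem \ref{thm:main}, yielding Corollary \ref{cor:generalization LPP} and, for Koszulness, Proposition \ref{prop:Koszul} and Corollary \ref{cor:Koszul} with the weaker hypothesis $\epsilon(X,L)\ge 6$). So what you gain is uniformity with the paper's combinatorial framework and, essentially for free, the sharper non-strict inequality; what the original argument buys is independence from the Newton--Okounkov machinery and validity in arbitrary dimension (Proposition \ref{prop:Koszul LPP} holds for abelian varieties of any dimension, whereas the slice criterion of Theorem \ref{thm:infinitesimal Newton-Okounkov body} is a surface statement). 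Your remark about translating to the origin is the right bookkeeping and is consistent with how Theorem \ref{thm:infinitesimal Newton-Okounkov body} is stated.
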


\begin{cor} {\rm (\cite{LPP})} \label{cor:Koszul LPP}
Let $(X,L)$ be a very general polarized abelian surface of type $(d_{1},d_{2})$. Assume that $d_{1}d_{2}>144$. Then $R(X,L)$ is Koszul.
\end{cor}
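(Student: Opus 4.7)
The plan is to reduce Corollary~\ref{cor:Koszul LPP} to Proposition~\ref{prop:Koszul LPP2} by producing a lower bound on $\epsilon(X,L)$ from the numerical hypothesis $d_1 d_2 > 144$. The only tool needed, beyond Proposition~\ref{prop:Koszul LPP2}, is the Bauer / Di~Rocco--Harbourne--Kapustka--Knutsen--Syzdek--Szemberg lower bound recalled in the introduction: for a very general polarized abelian surface $(X,L)$,
\[
\epsilon(X,L) \;\ge\; \frac{1}{2\sqrt{2}}\sqrt{(L^2)}.
\]

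First I would translate the type into the self-intersection number. For an abelian surface polarized by a line bundle of type $(d_1,d_2)$, Riemann--Roch on abelian varieties together with $h^0(X,L) = d_1 d_2$ gives $(L^2) = 2\,d_1 d_2$. Substituting this into the Seshadri lower bound above yields
\[
\epsilon(X,L) \;\ge\; \frac{1}{2\sqrt{2}}\sqrt{2\,d_1 d_2} \;=\; \frac{\sqrt{d_1 d_2}}{2}.
\]

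Under the hypothesis $d_1 d_2 > 144$, this immediately gives $\epsilon(X,L) > 6$. Proposition~\ref{prop:Koszul LPP2} then applies and shows that $R(X,L)$ is Koszul, which finishes the proof.

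There is no serious obstacle: the corollary is a bookkeeping consequence of combining two previously cited bounds. The only subtleties to mention are the identity $(L^2) = 2\,d_1 d_2$ (responsible for the numerical threshold $144 = 4 \cdot 36$, not $36$) and the fact that the ``very general'' hypothesis is used exactly once, namely to invoke the Bauer/DHKKSS Seshadri estimate; without it the Seshadri constant can drop and Proposition~\ref{prop:Koszul LPP2} no longer applies.
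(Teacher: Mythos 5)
Your proof is correct and is exactly the derivation the paper intends: combine the Bauer lower bound $\epsilon(X,L)\ge\tfrac12\sqrt{d_1d_2}$ for very general $(X,L)$ (via $(L^2)=2d_1d_2$) with the hypothesis $d_1d_2>144$ to get $\epsilon(X,L)>6$, and then apply Proposition \ref{prop:Koszul LPP2}. The bookkeeping, including the role of the very-generality hypothesis and the origin of the threshold $144=4\cdot 36$, is all accurate.
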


\end{subsection}

\end{section}

\begin{section} {Higher syzygies on abelian surfaces} \label{section:main}

\begin{subsection} {Higher syzygies on $(X,L)$}
This subsection is devoted to proving Theorem \ref{thm:main}, which extends the result of Lazarsfeld-Pareschi-Popa (\cite{LPP} or Theorem \ref{thm:LPP}) in the case of abelian surfaces. Moreover, we treat its Koszulness. While Theorem \ref{thm:LPP} is proven from the computations using the Poincar\'e bundle and the techniques of multiplier ideals, our main tool is the theory of the generic infinitesimal Newton-Okounkov body. 

\begin{thm} \label{thm:main}
Let $(X,L)$ be a polarized abelian surface. Assume that 
\begin{align*}
(L^{2}) \cdot (\epsilon(X,L)-p-2)-(p+2) \cdot \epsilon(X,L)^{2} > 0.
\end{align*}
Then $L$ satisfies property $N_{p}$. 
\end{thm}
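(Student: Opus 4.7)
The plan is to apply Theorem~\ref{thm:infinitesimal Newton-Okounkov body} to $B=\tfrac{1}{p+2}L$ and then invoke Theorem~\ref{thm:LPP1}. Since $\Delta_{(E,z)}(\pi^*B)=\tfrac{1}{p+2}\Delta_{(E,z)}(\pi^*L)$, the hypothesis of Theorem~\ref{thm:infinitesimal Newton-Okounkov body} becomes the existence of some $z\in E$ with
\[
\mathrm{length}\!\left(\Delta_{(E,z)}(\pi^*L)\cap\bigl(\{2(p+2)\}\times\mathbb{R}\bigr)\right)>p+2.\tag{$\star$}
\]
I would verify $(\star)$ by choosing $x\in X$ very general and $z\in E$ very general, so that $\Delta_{(E,z)}(\pi^*L)=\Delta_{x}(L)$ is the generic infinitesimal Newton--Okounkov body. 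Writing $\psi(t)$ for its slice-length function and $\mu:=\mu(L,x)$, the basic input (combining Propositions~\ref{prop:base} and~\ref{prop:moving Seshadri constant} with the containment $\Delta_{\epsilon}^{-1}\subseteq\Delta_x(L)$ coming from $\xi(L;x)=\epsilon$) is that $\psi$ is concave on $[0,\mu]$, $\psi(t)=t$ on $[0,\epsilon]$, $\psi(t)\le t$ on $[0,\mu]$, and $\int_0^\mu\psi\,dt=\tfrac{1}{2}L^2$. The chord inequality on $[\epsilon,\mu]$ together with the area identity gives $\mu\le L^2/\epsilon$.

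A preliminary computation shows the hypothesis forces $\epsilon>p+2$ and $L^2>\tfrac{(p+2)\epsilon^2}{\epsilon-p-2}\ge 4(p+2)^2$ (the right-hand side is minimized at $\epsilon=2(p+2)$). In particular $\sqrt{L^2}>2(p+2)$, so for $t_0:=2(p+2)$ we have $t_0<\sqrt{L^2}\le\mu$, and the slice of $\Delta_x(L)$ at $t_0$ is non-empty. I would split into two cases according to whether $\epsilon$ is maximal. If $\epsilon=\sqrt{L^2}$, comparing areas gives $\Delta_x(L)=\Delta_\epsilon^{-1}$, the hypothesis simplifies to $\epsilon>2(p+2)$, and $\psi(t_0)=t_0>p+2$. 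If $\epsilon<\sqrt{L^2}$, the heart of the argument is the inequality
\[
\psi(t_0)\;\ge\;\frac{\epsilon(L^2-\epsilon t_0)}{L^2-\epsilon^2},\tag{$\dagger$}
\]
from which the desired bound $\psi(t_0)>p+2$ is algebraically equivalent to the hypothesis $(L^2)(\epsilon-p-2)>(p+2)\epsilon^2$.

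The main obstacle is establishing $(\dagger)$. Geometrically, its right-hand side is the slice length at $t_0$ of the ``extremal'' triangle $\Delta'$ with vertices $(0,0)$, $(\epsilon,\epsilon)$, and $(L^2/\epsilon,0)$; the assertion is that $\Delta'$ minimizes the slice at $t_0$ among convex bodies $\Delta\subseteq\{0\le y\le t\}$ containing $\Delta_\epsilon^{-1}$ and of area $L^2/2$. The route I would take is to reduce first to piecewise linear concave $\psi$ (the general case follows by density), parametrize such functions by the right endpoint $\mu\in[\sqrt{L^2},L^2/\epsilon]$ together with interior break-points, and use the area identity together with the slope-monotonicity constraint to show that $\psi(t_0)$ is minimized at the boundary $\mu=L^2/\epsilon$, corresponding exactly to $\Delta'$. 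Alternatively, in the submaximal case Proposition~\ref{prop:Seshadri surface} supplies a Seshadri curve $F$ with $(L.F)=a$ and $\mathrm{mult}_x(F)=b$, and the Zariski decomposition of $\pi^*L-tE$ past $t=\epsilon$ is governed by the strict transform $\tilde F=\pi^*F-bE$; combined with the Hodge index inequality $F^2\le (L.F)^2/L^2$, a direct computation of $\psi(t)=(ab-tF^2)/(b^2-F^2)$ (and its extension when further curves enter $N_t$) yields $(\dagger)$. The chord inequality by itself is insufficient once $\mu<L^2/\epsilon$, so the global area constraint and the diagonal bound $\psi\le t$ must be used in an essential way.
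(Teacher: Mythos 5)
Your proposal is correct and follows essentially the same route as the paper: both reduce property $N_{p}$ to the slice-length criterion of Theorem \ref{thm:infinitesimal Newton-Okounkov body} applied to $B=\tfrac{1}{p+2}L$ (then Theorem \ref{thm:LPP1}), and both obtain the required slice bound at $t=2$ for $B$ (equivalently at $t_{0}=2(p+2)$ for $L$) by comparing the area $\tfrac{1}{2}(L^{2})$ of the generic infinitesimal Newton--Okounkov body with the area of an extremal convex region cut out by the diagonal bound and the inverted simplex $\Delta_{\epsilon}^{-1}$ --- the paper states this contrapositively via the regions $\Delta_{\alpha}$ and $\Delta$, which are exactly your extremal triangle in the rescaled picture. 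One minor point to fix in the write-up: your inequality $(\dagger)$ is false in the range $2(p+2)\le\epsilon(X,L)<\sqrt{L^{2}}$ (there its right-hand side exceeds $t_{0}$), but in that range $\psi(t_{0})=t_{0}=2(p+2)>p+2$ holds trivially because $\Delta_{t_{0}}^{-1}\subseteq\Delta_{\epsilon}^{-1}\subseteq\Delta_{x}(L)$, so that case should be split off before invoking $(\dagger)$.
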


\begin{proof}
Let $B:=\frac{1}{p+2}L$ be an ample $\mathbb{Q}$-divisor on $X$, and let $x \in X$ be a very general point. Since $X$ is homogeneous, $\epsilon(L;x)=\epsilon(X,L)$. Now, we can consider the generic infinitesimal Newton-Okounkov body $\Delta_{x}(B)$. If $\epsilon(X,L) \le p+2$, then 
\begin{align*}
(L^{2}) \cdot (\epsilon(X,L)-p-2)-(p+2) \cdot \epsilon(X,L)^{2} \le -(p+2) \cdot \epsilon(X,L)^{2} <0,
\end{align*}
which contradicts the assumption. So $\epsilon(X,L) > p+2$, i.e. $\epsilon(B;x)=\epsilon(X,B)>1$. Let $l:={\rm length}(\Delta_{x}(B) \cap \{2\} \times \mathbb{R})$. We claim that $l >1$. For a contradiction, suppose that $l \le 1$. If $\epsilon(X,L) \ge 2(p+2)$, then $l=2>1$. So assume that $p+2<\epsilon(X,L)<2(p+2)$, i.e. $1<\epsilon(B;x)<2$. Let us define two kinds of convex bodies: $\Delta_{\alpha}$ for $1<\epsilon(B;x)<\alpha<2$ and $\Delta$. (See FIGURE 1 for an illustration.) 

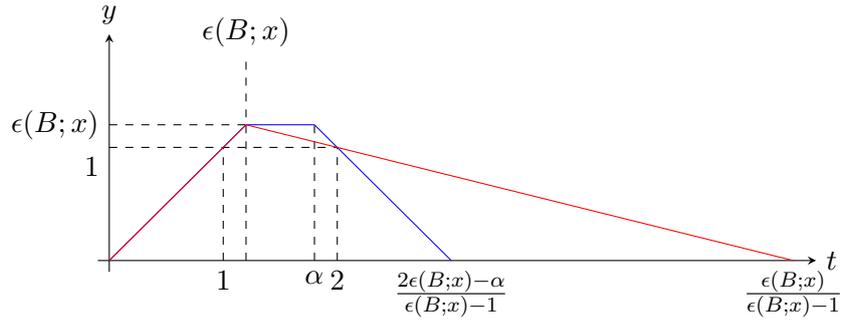
\begin{figure}[t]
\centering
\begin{tikzpicture} [scale=1.5]
      \draw[->] (-0.1,0) -- (6.2,0) node[right] {$t$};
      \draw[->] (0,-0.1) -- (0,2) node[above] {$y$};
      \node (A) at (1,0) [below] {$1$};
      \draw [dashed] (1,0) -- (1,1);
      \node (B) at (2,0) [below] {$2$};
      \draw [dashed] (2,0) -- (2,1);
      \node (C) at (1.2,1.8) [above] {$\epsilon(B;x)$};
      \draw [dashed] (1.2,0) -- (1.2,1.8);    
      \node (D) at (0,1.2) [left] {$\epsilon(B;x)$};
      \draw [dashed] (0,1.2) -- (1.2,1.2);
      \node (E) at (3,0) [below] {$\frac{2\epsilon(B;x)-\alpha}{\epsilon(B;x)-1}$};
      \node (F) at (6,0) [below] {$\frac{\epsilon(B;x)}{\epsilon(B;x)-1}$};
      \node (G) at (1.8,0) [below] {$\alpha$};
      \draw [dashed] (1.8,0) -- (1.8,1.2);   
      \node (H) at (0,1) [below left] {$1$};
      \draw [dashed] (0,1) -- (2,1); 
      
      \draw[scale=1,domain=0:1.2,smooth,variable=\t,blue] plot ({\t},{\t});
      \draw[scale=1,domain=1.2:1.8,smooth,variable=\t,blue] plot ({\t},{1.2});
      \draw[scale=1,domain=1.8:3,smooth,variable=\t,blue] plot ({\t},{-(\t-2)+1});
      
      \draw[scale=1,domain=0:1.2,smooth,variable=\t,red] plot ({\t},{\t});
      \draw[scale=1,domain=1.2:6,smooth,variable=\t,red] plot ({\t},{-0.25*(\t-2)+1});
\end{tikzpicture}
\caption{Upper boundaries of $\Delta_{\alpha}$ (blue) and $\Delta$ (red)}
\end{figure}

First, $\Delta_{\alpha}$ is a collection of $(t,y) \in \mathbb{R}^{2}$ satisfying
\begin{displaymath}
0 \le y \le \left\{ \begin{array}{ll}
t & \textrm{if $0 \le t \le \epsilon(B;x)$}\\
\epsilon(B;x) & \textrm{if $\epsilon(B;x) < t \le \alpha$}\\
\frac{\epsilon(B;x)-1}{\alpha-2}(x-2)+1 & \textrm{if $\alpha < t \le \frac{2\epsilon(B;x)-\alpha}{\epsilon(B;x)-1}$}.
\end{array} \right.
\end{displaymath}
Next, $\Delta$ is a collection of $(t,y) \in \mathbb{R}^{2}$ satisfying
\begin{displaymath}
0 \le y \le \left\{ \begin{array}{ll}
t & \textrm{if $0 \le t \le \epsilon(B;x)$}\\
\frac{\epsilon(B;x)-1}{\epsilon(B;x)-2}(x-2)+1 & \textrm{if $\epsilon(B;x) < t \le \frac{\epsilon(B;x)}{\epsilon(B;x)-1}$}.
\end{array} \right.
\end{displaymath}
Since $\Delta_{x}(B)$ is generic, by Proposition \ref{prop:Seshadri surface}, Proposition \ref{prop:moving Seshadri constant} and the assumption that $l \le 1$, $\Delta_{x}(B)$ is contained in either $\Delta_{\alpha}$ for some $1<\epsilon(B;x)<\alpha<2$ or $\Delta$. So 
\begin{align*}
{\rm vol}_{{\mathbb{R}^{2}}}(\Delta_{x}(B)) \le \max_{1<\epsilon(B;x)<\alpha<2} \{{\rm vol}_{{\mathbb{R}^{2}}}(\Delta_{\alpha}), {\rm vol}_{{\mathbb{R}^{2}}}(\Delta)\}.
\end{align*}
First, for any $1<\epsilon(B;x)<\alpha<2$, 
\begin{align*}
{\rm vol}_{{\mathbb{R}^{2}}}(\Delta_{\alpha})=-\frac{1}{2}\epsilon(B;x)^{2}+\alpha \cdot \epsilon(B;x)+\frac{2\epsilon(B;x)-\alpha \cdot \epsilon(B;x)}{2(\epsilon(B;x)-1)}\cdot \epsilon(B;x). 
\end{align*}
Next, ${\rm vol}_{{\mathbb{R}^{2}}}(\Delta)=\frac{\epsilon(B;x)^{2}}{2(\epsilon(B;x)-1)}$. Then 
\begin{align*}
\frac{2(\epsilon(B;x)-1)}{\epsilon(B;x)}\cdot ({\rm vol}_{{\mathbb{R}^{2}}}(\Delta)-{\rm vol}_{{\mathbb{R}^{2}}}(\Delta_{\alpha}))=(2-\epsilon(B;x))(\alpha-\epsilon(B;x))>0,
\end{align*}
i.e. ${\rm vol}_{{\mathbb{R}^{2}}}(\Delta)>{\rm vol}_{{\mathbb{R}^{2}}}(\Delta_{\alpha})$ for any $1<\epsilon(B;x)<\alpha<2$. Therefore, 
\begin{align*}
\frac{\epsilon(X,L)^{2}}{(p+2) \cdot 2(\epsilon(X,L)-p-2)}=\frac{\epsilon(B;x)^{2}}{2(\epsilon(B;x)-1)}&={\rm vol}_{\mathbb{R}^{2}}(\Delta) \\
&\ge {\rm vol}_{\mathbb{R}^{2}}(\Delta_{x}(B))=\frac{(L^{2})}{2(p+2)^{2}},
\end{align*}
i.e. $(L^{2}) \cdot (\epsilon(X,L)-p-2)-(p+2) \cdot \epsilon(X,L)^{2} \le 0$, which contradicts our assumption. So $l > 1$. By Theorem \ref{thm:infinitesimal Newton-Okounkov body} and Theorem \ref{thm:LPP1}, $L$ satisfies property $N_{p}$. 
\end{proof}

The proof of Theorem \ref{thm:main} implies that we do not need any condition on $(L^{2})$ if $\epsilon(X,L) \ge 2(p+2)$. So we get the following immediate corollary. This implies that Theorem \ref{thm:main} is a generalization of Theorem \ref{thm:LPP}. 

\begin{cor} \label{cor:generalization LPP}
Let $(X,L)$ be a polarized abelian surface. Assume that 
\begin{align*}
\epsilon(X,L) \ge 2(p+2).
\end{align*}
Then $L$ satisfies property $N_{p}$. 
\end{cor}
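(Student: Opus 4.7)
The plan is to recycle the ``easy case'' that already appears inside the proof of Theorem \ref{thm:main}, observing that the hypothesis $\epsilon(X,L) \ge 2(p+2)$ forces the key length condition of Theorem \ref{thm:infinitesimal Newton-Okounkov body} trivially, so no volume estimate on $\Delta_x(B)$ and no condition on $(L^2)$ is needed.

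First I would set $B := \frac{1}{p+2}L$, an ample $\mathbb{Q}$-divisor with $\epsilon(X,B) = \epsilon(X,L)/(p+2) \ge 2$. Picking a very general point $x \in X$, the homogeneity of $X$ gives $\epsilon(B;x) = \epsilon(X,B) \ge 2$. Combining Proposition \ref{prop:moving Seshadri constant} with the definition of the largest inverted simplex constant yields $\xi(B;x) = \epsilon(\lVert B \rVert;x) \ge 2$, so the inverted standard simplex $\Delta_2^{-1}$ is contained in the generic infinitesimal Newton--Okounkov body $\Delta_x(B)$. Since the slice of $\Delta_2^{-1}$ at $t = 2$ is the full segment $\{2\} \times [0,2]$, it follows that
\[
{\rm length}(\Delta_x(B) \cap (\{2\} \times \mathbb{R})) \ge 2 > 1.
\]
Theorem \ref{thm:infinitesimal Newton-Okounkov body} then produces an effective $\mathbb{Q}$-divisor $D \equiv (1-c)B = \frac{1-c}{p+2}L$ for some $0 < c < 1$ with $\mathcal{J}(X,D) = \mathcal{I}_0$, and feeding this into Theorem \ref{thm:LPP1} delivers property $N_p$ for $L$.

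There is essentially no obstacle: this is exactly the subcase of the proof of Theorem \ref{thm:main} in which one avoids Proposition \ref{prop:Seshadri surface} and the comparison of the convex bodies $\Delta_\alpha$ and $\Delta$ altogether. The only mild subtlety is to confirm that the identification in Proposition \ref{prop:moving Seshadri constant} is compatible with the $\mathbb{Q}$-divisor rescaling $B = \frac{1}{p+2}L$, but both the Seshadri constant and the largest inverted simplex constant scale linearly under multiplication by a positive rational, so this is automatic.
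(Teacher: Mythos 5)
Your proposal is correct and is exactly the argument the paper intends: the corollary is extracted from the subcase of the proof of Theorem \ref{thm:main} where $\epsilon(B;x)\ge 2$ forces $\mathrm{length}(\Delta_{x}(B)\cap\{2\}\times\mathbb{R})\ge 2>1$ via Proposition \ref{prop:moving Seshadri constant}, after which Theorem \ref{thm:infinitesimal Newton-Okounkov body} and Theorem \ref{thm:LPP1} conclude. Your added remarks on the closedness of the Okounkov body (so that the supremum defining $\xi(B;x)$ yields $\Delta_{2}^{-1}\subseteq\Delta_{x}(B)$) and on the linear rescaling under $B=\frac{1}{p+2}L$ are the right details to check and are handled correctly.
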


\begin{rmk} (Another proof of Theorem \ref{thm:LPP})
We can also prove Theorem \ref{thm:LPP} by using Theorem \ref{thm:main} directly. Suppose that $\epsilon(X,L)>2(p+2)$. By Proposition \ref{prop:moving Seshadri constant}, it is easy to get that $\epsilon(X,L) \le \sqrt{(L^{2})}$. Then 
\begin{align*}
(L^{2}) \cdot (\epsilon(X,L)-p-2)-(p+2) \cdot \epsilon(X,L)^{2} \ge (L^{2}) \cdot (\epsilon(X,L)-2(p+2))>0
\end{align*}
since $L$ is ample and $\epsilon(X,L)>2(p+2)$. By Theorem \ref{thm:main}, $L$ satisfies property $N_{p}$. Thus Theorem \ref{thm:main} recovers Theorem \ref{thm:LPP} as a particular case. 
\end{rmk}

Recall that a line bundle $L$ on a projective variety $X$ is called $k$-very ample if for any zero-dimensional subscheme $Z$ of $X$ of length $k+1$, the restriction map 
\begin{align*}
H^{0}(X,L) \rightarrow H^{0}(Z, {L|}_{Z})
\end{align*}
is surjective. In particular, $0$-very ample is equivalent to base point freeness of $L$ and $1$-very ample means that $L$ is very ample. This is another notion of strong positivity of a polarized variety. Moreover, it is well-known that property $N_{p}$ implies $(p+1)$-very ampleness (\cite[Remark 3.9]{EGHP}). Therefore we obtain:

\begin{cor} \label{cor:p-very ampleness}
Let $(X,L)$ be a polarized abelian surface. Assume that 
\begin{align*}
(L^{2}) \cdot (\epsilon(X,L)-p-1)-(p+1) \cdot \epsilon(X,L)^{2} > 0.
\end{align*}
Then $L$ is $p$-very ample. 
\end{cor}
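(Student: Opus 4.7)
The plan is to reduce the corollary directly to Theorem \ref{thm:main} by an index shift, and then invoke the general fact (from \cite[Remark 3.9]{EGHP}, recorded in the paragraph preceding the statement) that property $N_{p}$ implies $(p+1)$-very ampleness. The key numerical observation is that the hypothesis $(L^{2})(\epsilon(X,L)-p-1)-(p+1)\epsilon(X,L)^{2}>0$ is exactly what one obtains from Theorem \ref{thm:main}'s hypothesis $(L^{2})(\epsilon(X,L)-p-2)-(p+2)\epsilon(X,L)^{2}>0$ upon replacing the parameter $p$ by $p-1$. So, nothing beyond Theorem \ref{thm:main} and the cited Eisenbud--Green--Hulek--Popescu remark is needed.

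Concretely, assuming $p\ge 1$, I would first apply Theorem \ref{thm:main} with parameter $p-1$: the hypothesis translates verbatim into $(L^{2})(\epsilon(X,L)-p-1)-(p+1)\epsilon(X,L)^{2}>0$, which is assumed, so $L$ satisfies property $N_{p-1}$. Then invoking \cite[Remark 3.9]{EGHP} yields that $L$ is $p$-very ample, which is the conclusion. The only delicate point, and the main ``obstacle'' such as it is, is the boundary case $p=0$, which would demand base point freeness of $L$; here the hypothesis becomes $(L^{2})(\epsilon(X,L)-1)>\epsilon(X,L)^{2}$, in particular forcing $\epsilon(X,L)>1$, and this case would have to be handled separately (for instance via a Reider-type argument on the abelian surface, using that $\epsilon(X,L)>1$ at a very general point together with the homogeneity of $X$). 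For $p\ge 1$ the argument is essentially immediate from the results of the preceding subsection.
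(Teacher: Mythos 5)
Your reduction for $p\ge 1$ coincides with the paper's: apply Theorem \ref{thm:main} with $p-1$ in place of $p$ to get property $N_{p-1}$, then use the fact recorded before the statement (from \cite[Remark 3.9]{EGHP}) that property $N_{p-1}$ implies $p$-very ampleness. The only part of the corollary that is not an immediate consequence of Theorem \ref{thm:main} is therefore the case $p=0$ (base point freeness), and this is exactly where your proposal stops: you correctly flag it as the delicate case and point at ``a Reider-type argument'' with the input $\epsilon(X,L)>1$, but you do not carry it out, so as written the $p=0$ case is asserted rather than proved.

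The gap is fillable precisely along the line you indicate, and it is worth recording the missing steps, which is what the paper does. First, the hypothesis at $p=0$ reads $(L^{2})(\epsilon(X,L)-1)-\epsilon(X,L)^{2}>0$; combined with the general bound $\epsilon(X,L)\le\sqrt{L^{2}}$ (the downward parabola $t\mapsto (L^{2})(t-1)-t^{2}$ is nonpositive on $(0,\sqrt{L^{2}}\,]$ whenever $L^{2}\le 4$), this forces $L^{2}>4$, hence $L^{2}\ge 6$ by parity, and also $\epsilon(X,L)>1$. Second, with $L^{2}\ge 6$ in hand one invokes the Reider-type criterion for abelian surfaces, namely \cite[Theorem 1.1]{T}: if $L$ were not globally generated there would exist an elliptic curve $C\subset X$ with $(L.C)=1$. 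Third, such a curve forces $\epsilon(X,L)=1$ (the paper cites \cite[Lemma 2.6]{N} and \cite[Theorem 2.2.2]{TSBMAWT}; more directly, $\epsilon(L;x)\le (L.C)/\mathrm{mult}_{x}(C)=1$ for $x\in C$, and by homogeneity of $X$ this equals $\epsilon(X,L)$), contradicting $\epsilon(X,L)>1$. So your strategy is the right one, but a complete proof needs the numerical deduction $L^{2}\ge 6$ to make the Reider-type theorem applicable and the explicit contradiction with the Seshadri bound in the exceptional case.
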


\begin{proof}
When $p \ge 1$, the result is a direct consequence of Theorem \ref{thm:main}, so we only need to check it for $p=0$. In this case, our assumption implies that $L^{2} > 4$, that is, $L^{2} \ge 6$. Suppose that $L$ is not globally generated. \cite[Theorem 1.1]{T} implies that there exists an elliptic curve $C$ on $X$ satisfying $(L.C)=1$. Then the existence of such an elliptic curve gives that $(X,L)$ is a product of elliptic curves such that $\epsilon(X,L)=1$ by \cite[Lemma 2.6]{N} and \cite[Theorem 2.2.2]{TSBMAWT}. However, it contradicts our assumption $(L^{2}) \cdot (\epsilon(X,L)-1)-(p+1) \cdot \epsilon(X,L)^{2}>0$. Hence $L$ is globally generated as desired. 
\end{proof}

Another application of this idea concerns Koszul rings so that we extend Proposition \ref{prop:Koszul LPP2}. Since the proof is similar, we omit it here. 

\begin{prop} \label{prop:Koszul}
Let $(X,L)$ be a polarized abelian surface. Assume that 
\begin{align*}
(L^{2}) \cdot (\epsilon(X,L)-3)-3 \cdot \epsilon(X,L)^{2}>0.
\end{align*}
Then $R(X,L)$ is a Koszul algebra. 
\end{prop}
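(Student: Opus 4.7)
The plan is to follow the proof of Theorem \ref{thm:main} almost verbatim, with $p+2$ replaced by $3$ throughout, and to conclude via Proposition \ref{prop:Koszul LPP} instead of Theorem \ref{thm:LPP1}. More precisely, set $B := \frac{1}{3}L$, and choose a very general point $x \in X$. Since $X$ is homogeneous, $\epsilon(L;x) = \epsilon(X,L)$, so we may consider the generic infinitesimal Newton-Okounkov body $\Delta_{x}(B)$. The hypothesis
\begin{align*}
(L^{2})\cdot(\epsilon(X,L)-3) - 3 \cdot \epsilon(X,L)^{2} > 0
\end{align*}
forces $\epsilon(X,L) > 3$, equivalently $\epsilon(B;x) > 1$, exactly as in the first lines of the proof of Theorem \ref{thm:main}.

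Next I would show that $l := \mathrm{length}(\Delta_{x}(B) \cap \{2\} \times \mathbb{R}) > 1$. If $\epsilon(B;x) \ge 2$ this is automatic, so assume $1 < \epsilon(B;x) < 2$ and argue by contradiction. Under the hypothesis $l \le 1$, Proposition \ref{prop:Seshadri surface} together with Proposition \ref{prop:moving Seshadri constant} forces $\Delta_{x}(B)$ to lie inside one of the two comparison regions $\Delta_{\alpha}$ (for some $1 < \epsilon(B;x) < \alpha < 2$) or $\Delta$ from the proof of Theorem \ref{thm:main}. The volume computation carried out there — in particular the identity
\begin{align*}
\frac{2(\epsilon(B;x)-1)}{\epsilon(B;x)}\bigl(\mathrm{vol}(\Delta)-\mathrm{vol}(\Delta_{\alpha})\bigr) = (2-\epsilon(B;x))(\alpha-\epsilon(B;x)) > 0
\end{align*}
— shows that $\mathrm{vol}(\Delta)$ is the larger of the two, so
\begin{align*}
\frac{\epsilon(X,L)^{2}}{3 \cdot 2(\epsilon(X,L)-3)} = \frac{\epsilon(B;x)^{2}}{2(\epsilon(B;x)-1)} \ge \mathrm{vol}_{\mathbb{R}^{2}}(\Delta_{x}(B)) = \frac{(L^{2})}{2\cdot 3^{2}},
\end{align*}
which is exactly the negation of our hypothesis. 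Hence $l > 1$.

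Having secured $l > 1$, Theorem \ref{thm:infinitesimal Newton-Okounkov body} produces an effective $\mathbb{Q}$-divisor $F_{0} \equiv (1-c)B = \frac{1-c}{3}L$ with $\mathcal{J}(X,F_{0}) = \mathcal{I}_{0}$ the maximal ideal at the origin. Applying Proposition \ref{prop:Koszul LPP} to $F_{0}$ then yields that $R(X,L)$ is a Koszul algebra. The whole argument is a routine re-run of Theorem \ref{thm:main}; the only thing that needs checking is that the rescaling $p+2 \rightsquigarrow 3$ leaves the volume comparison intact, which it does because the comparison only used $1 < \epsilon(B;x) < 2$ and not the specific value of the denominator in $B = \frac{1}{p+2}L$. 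So the main (and essentially only) obstacle is bookkeeping: confirming that each invocation of Proposition \ref{prop:Seshadri surface}, Proposition \ref{prop:moving Seshadri constant}, and Theorem \ref{thm:infinitesimal Newton-Okounkov body} still applies with the new scaling, after which the Koszul conclusion follows from Proposition \ref{prop:Koszul LPP}.
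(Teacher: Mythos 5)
Your proposal is correct and is precisely the argument the paper intends: the printed text omits the proof with the remark that it is ``similar'' to that of Theorem \ref{thm:main}, and your re-run with $B=\frac{1}{3}L$ (i.e.\ the case $p+2=3$ of the volume comparison) followed by Theorem \ref{thm:infinitesimal Newton-Okounkov body} and Proposition \ref{prop:Koszul LPP} is exactly that similar proof. The arithmetic checks out: $\frac{\epsilon(B;x)^{2}}{2(\epsilon(B;x)-1)}\ge\frac{(L^{2})}{18}$ is equivalent to $(L^{2})(\epsilon(X,L)-3)-3\epsilon(X,L)^{2}\le 0$, so the hypothesis indeed forces $l>1$.
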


\begin{cor} \label{cor:Koszul}
Let $(X,L)$ be a polarized abelian surface that satisfies $\epsilon(X,L) \ge 6$. Then $R(X,L)$ is a Koszul algebra. 
\end{cor}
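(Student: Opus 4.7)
The plan is to imitate the transition from Theorem \ref{thm:main} to Corollary \ref{cor:generalization LPP}: once the Seshadri bound is sharp enough (namely $\epsilon(X,L) \ge 2(p+2)$ in the syzygy setting, and $\epsilon(X,L) \ge 6$ here), the volume-comparison argument used to establish Proposition \ref{prop:Koszul} becomes unnecessary, and the key length condition feeding Theorem \ref{thm:infinitesimal Newton-Okounkov body} holds for trivial reasons. I would therefore bypass the numerical inequality of Proposition \ref{prop:Koszul} and appeal directly to Theorem \ref{thm:infinitesimal Newton-Okounkov body} together with Proposition \ref{prop:Koszul LPP}.

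Concretely, I would fix a very general point $x \in X$, set $B := \frac{1}{3}L$, and use the homogeneity of $X$ to conclude $\epsilon(B;x) = \frac{\epsilon(X,L)}{3} \ge 2$. By Proposition \ref{prop:moving Seshadri constant}, the largest inverted simplex constant satisfies $\xi(B;x) = \epsilon(\lVert B \rVert; x) = \epsilon(B;x) \ge 2$, and the definition of $\xi$ then yields $\Delta_{2}^{-1} \subseteq \Delta_{(E,z)}(\pi^{*}B)$ for a suitable $z \in E$. Slicing at $t=2$, the inverted simplex already contributes the segment $\{2\} \times [0,2]$, and hence
\begin{align*}
{\rm length}(\Delta_{(E,z)}(\pi^{*}B) \cap \{2\} \times \mathbb{R}) \ge 2 > 1.
\end{align*}
This is precisely the hypothesis of Theorem \ref{thm:infinitesimal Newton-Okounkov body}, which produces an effective $\mathbb{Q}$-divisor $D \equiv (1-c)B = \frac{1-c}{3}L$ with $0 < c < 1$ and $\mathcal{J}(X,D) = \mathcal{I}_{0}$. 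Proposition \ref{prop:Koszul LPP} then delivers the Koszulness of $R(X,L)$.

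There is essentially no serious obstacle: each step reuses machinery already established in Section \ref{section:preliminary}. The one point requiring care is why one cannot simply quote Proposition \ref{prop:Koszul} as a black box -- in the boundary case $\epsilon(X,L) = 6$ with $L^{2} = 36$, the strict inequality $(L^{2})(\epsilon(X,L)-3) - 3\,\epsilon(X,L)^{2} > 0$ degenerates to an equality -- so the plan is to work at the level of the length condition rather than at the level of the numerical hypothesis, which is the same trick by which Corollary \ref{cor:generalization LPP} sharpens Theorem \ref{thm:main} at the boundary $\epsilon(X,L) = 2(p+2)$.
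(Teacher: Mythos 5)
Your argument is correct and is exactly the route the paper intends: just as Corollary \ref{cor:generalization LPP} follows from the proof of Theorem \ref{thm:main} because $\epsilon(B;x)\ge 2$ forces the slice at $t=2$ to have length $2>1$ via $\xi(B;x)=\epsilon(\lVert B\rVert;x)$, here $B=\tfrac13 L$ and $\epsilon(X,L)\ge 6$ give the hypothesis of Theorem \ref{thm:infinitesimal Newton-Okounkov body} directly, and Proposition \ref{prop:Koszul LPP} concludes. Your observation that the boundary case $\epsilon(X,L)=6$, $L^{2}=36$ is not covered by the numerical hypothesis of Proposition \ref{prop:Koszul} is also the right reason for arguing at the level of the length condition rather than quoting that proposition as a black box.
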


\end{subsection}

\begin{subsection} {Higher syzygies on $(X,L)$ for large $L^{2}$} \label{subsection:large}

We prove that the higher syzygies and the higher order embeddings of $(X,L)$ are completely determined by the lower bound of $\epsilon(X,L)$ when $L^{2}$ is large. (See \cite[Theorem 1.1]{KL1509} for the other numerical criterion.)

\begin{thm} \label{thm:large}
Let $(X,L)$ be a polarized abelian surface. Assume that $L^{2}>(p+2)(p+3)^{2}$. Then the following are equivalent:
\begin{enumerate}[(1)]
\item $\epsilon(X,L)>\frac{(L^{2})-\sqrt{{(L^{2})}^{2}-4(p+2)^{2}(L^{2})}}{2(p+2)}$. 
\item $L$ satisfies property $N_{p}$. 
\end{enumerate}
\end{thm}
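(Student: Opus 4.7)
The plan is to re-express condition (1) and handle the two implications separately. Let $t_{\pm}=\tfrac{(L^{2})\pm\sqrt{(L^{2})^{2}-4(p+2)^{2}(L^{2})}}{2(p+2)}$ be the roots of the quadratic $q(t):=-(p+2)t^{2}+(L^{2})t-(p+2)(L^{2})$. The hypothesis $(L^{2})>(p+2)(p+3)^{2}$ forces $(L^{2})>4(p+2)^{2}$ (since $(p+3)^{2}>4(p+2)$), so both roots are real; substituting $(L^{2})=(p+2)(p+3)^{2}$ gives $t_{-}=p+3$, and a short derivative computation shows that $t_{-}$ is strictly decreasing in $(L^{2})$ on the range $(L^{2})>4(p+2)^{2}$. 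Hence the hypothesis is equivalent to $t_{-}<p+3$, and condition (1) reads simply $\epsilon(X,L)>t_{-}$.

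The direction (1) $\Rightarrow$ (2) is then immediate: $\epsilon(X,L)\le\sqrt{(L^{2})}<t_{+}$, where the second inequality follows from elementary algebra together with $(L^{2})>4(p+2)^{2}$. Thus $\epsilon(X,L)$ lies strictly between the two roots of $q$, where $q$ is positive, and Theorem \ref{thm:main} yields property $N_{p}$.

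For (2) $\Rightarrow$ (1), property $N_{p}$ forces $L$ to be $(p+1)$-very ample (as recalled in the discussion preceding Corollary \ref{cor:p-very ampleness}); it then suffices to show $\epsilon(X,L)\ge p+3$. If $\epsilon(X,L)$ is maximal, $\epsilon(X,L)=\sqrt{(L^{2})}>(p+3)\sqrt{p+2}\ge p+3$, so assume $\epsilon(X,L)<\sqrt{(L^{2})}$. Proposition \ref{prop:Seshadri surface} delivers an integral Seshadri exceptional curve $F$ with $(L\cdot F)=a$, $\mathrm{mult}_{x}F=b$ and $\epsilon(X,L)=a/b$, and I split on $b$. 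When $b=1$ the point $x$ is smooth on $F$, and since abelian surfaces contain no rational curves, $p_{a}(F)\ge 1$. If $p_{a}(F)=1$ the absence of rational curves in fact forces $F$ itself to be smooth elliptic; restricting sections along $F$ shows $L|_{F}$ is $(p+1)$-very ample on the elliptic curve $F$ (any length $(p+2)$ subscheme of $F$ is a length $(p+2)$ subscheme of $X$), and the sharp classical criterion then gives $\deg(L|_{F})\ge p+3$, so $\epsilon(X,L)=a\ge p+3$. If $p_{a}(F)\ge 2$, Hodge index combined with $F^{2}=2p_{a}(F)-2\ge 2$ yields $a^{2}\ge 2(L^{2})>2(p+2)(p+3)^{2}$, whence $\epsilon(X,L)=a>p+3$.

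The delicate case, and the main obstacle, is $b\ge 2$. Here I combine adjunction $F^{2}=2p_{a}(F)-2$ with the local bound $p_{a}(F)\ge p_{g}(\widetilde{F})+\binom{b}{2}$ (from the delta invariant at the multiplicity-$b$ point) and, crucially, with $p_{g}(\widetilde{F})\ge 1$ coming from the absence of rational curves on $X$; together these give $F^{2}\ge b(b-1)$, and Hodge index yields
\[
\epsilon(X,L)^{2}=\frac{a^{2}}{b^{2}}\ge(L^{2})\cdot\frac{b-1}{b}\ge\frac{(L^{2})}{2}>\frac{(p+2)(p+3)^{2}}{2}\ge(p+3)^{2},
\]
so $\epsilon(X,L)>p+3>t_{-}$. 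The obstacle is precisely the extra $+1$ here: Hodge together with only the naive adjunction bound $p_{a}(F)\ge\binom{b}{2}$ would produce $F^{2}\ge b^{2}-b-2$, too weak for $b\in\{2,3\}$ to reach the threshold $(L^{2})/2$. Exploiting the absence of rational curves on $X$ is what sharpens the bound by just the needed amount.
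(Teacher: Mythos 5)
Your proof is correct, and your direction $(1)\Rightarrow(2)$ coincides with the paper's (both amount to noting $\epsilon(X,L)\le\sqrt{L^{2}}<t_{+}$ and feeding the resulting positivity of the quadratic into Theorem \ref{thm:main}). For $(2)\Rightarrow(1)$ you take a genuinely different, essentially self-contained route. The paper reduces that implication to the purely numerical inequality that your $t_{-}$ is less than $\min\{\sqrt{7(L^{2})}/(2\sqrt{2}),\,\epsilon_{0}(L)\}$ by invoking two external results: the Bauer--Szemberg lower bound $\epsilon(X,L)\ge\min\{\sqrt{7(L^{2})}/(2\sqrt{2}),\,\epsilon_{0}(L)\}$ for abelian surfaces (\cite{BS1}), and \cite[Theorem 4.1]{KL1509}, which says that property $N_{p}$ forces every elliptic curve to have $L$-degree at least $p+3$. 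You instead re-derive what is needed of both from first principles: your case analysis on the Seshadri exceptional curve $F$ (adjunction with $K_{X}=0$, the delta-invariant estimate $p_{a}(F)\ge p_{g}(\widetilde{F})+\binom{b}{2}$, the fact that abelian surfaces contain no rational curves giving $p_{g}(\widetilde{F})\ge 1$, and Hodge index) yields $\epsilon(X,L)\ge\sqrt{(L^{2})/2}$ unless $F$ is a smooth elliptic curve through $x$ with multiplicity one --- a constant slightly weaker than Bauer--Szemberg's $\sqrt{7/8}$ but amply sufficient under $L^{2}>(p+2)(p+3)^{2}$ --- while the elliptic case is settled by restricting the $(p+1)$-very ampleness of $L$ (which follows from $N_{p}$ by \cite[Remark 3.9]{EGHP}) to $F$ and applying the sharp degree criterion for $k$-very ampleness on elliptic curves; this is precisely the content of the cited K\"uronya--Lozovanu theorem in the special case you need. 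The trade-off is clear: the paper's argument is two citations plus elementary algebra, whereas yours is longer but makes the theorem independent of \cite{BS1} and of \cite[Theorem 4.1]{KL1509}; your closing remark that the naive bound $p_{a}(F)\ge\binom{b}{2}$ fails for $b\in\{2,3\}$ correctly pinpoints where the absence of rational curves is essential to the argument.
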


\begin{proof}
The implication $(1) \Rightarrow (2)$ follows from the inequality $\sqrt{L^{2}}<\frac{(L^{2})+\sqrt{{(L^{2})}^{2}-4(p+2)^{2}(L^{2})}}{2(p+2)}$ and Theorem \ref{thm:main}, so we are left checking $(2) \Rightarrow (1)$. Since $L^{2} >(p+2)(p+3)^{2} \ge 4(p+2)^{2}$, $\frac{(L^{2})-\sqrt{{(L^{2})}^{2}-4(p+2)^{2}(L^{2})}}{2(p+2)}$ makes sense. By \cite{BS1}, it is sufficient to show the following two inequalities:
\begin{align*}
\frac{(L^{2})-\sqrt{{(L^{2})}^{2}-4(p+2)^{2}(L^{2})}}{2(p+2)}< \min\{\frac{\sqrt{7(L^{2})}}{2\sqrt{2}}, \text{ }\epsilon_{0}(L)\},
\end{align*}
where $\epsilon_{0}(L)$ is the minimal degree of an elliptic curve in $X$ with respect to $L$. It is easy to see the inequality $\frac{(L^{2})-\sqrt{{(L^{2})}^{2}-4(p+2)^{2}(L^{2})}}{2(p+2)}< \frac{\sqrt{7(L^{2})}}{2\sqrt{2}}$, so we focus on the inequality $\frac{(L^{2})-\sqrt{{(L^{2})}^{2}-4(p+2)^{2}(L^{2})}}{2(p+2)}< \epsilon_{0}(L)$. Note that property $N_{p}$ for $(X,L)$ gives the inequality $\epsilon_{0}(L) \ge p+3$ by \cite[Theorem 4.1]{KL1509}. Hence, it remains to check the inequality $\frac{(L^{2})-\sqrt{{(L^{2})}^{2}-4(p+2)^{2}(L^{2})}}{2(p+2)}< p+3$. However, it comes from our assumption on $L^{2}>(p+2)(p+3)^{2}$, so we are done. 
\end{proof}

In a similar manner, we present a criterion for the higher order embeddings of $(X,L)$. (See \cite[Corollary 1.5]{KL1509} for the other numerical criterion.)  

\begin{cor} \label{cor:large p-very ampleness}
Let $(X,L)$ be a polarized abelian surface. Assume that $L^{2}>(p+1)(p+2)^{2}$. Then the following are equivalent:
\begin{enumerate}[(1)]
\item $\epsilon(X,L)>\frac{(L^{2})-\sqrt{{(L^{2})}^{2}-4(p+1)^{2}(L^{2})}}{2(p+1)}$.
\item $L$ is $p$-very ample. 
\end{enumerate}
\end{cor}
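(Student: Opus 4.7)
The plan is to mirror the proof of Theorem \ref{thm:large}, replacing the parameter $(p+2)$ by $(p+1)$, invoking Corollary \ref{cor:p-very ampleness} in place of Theorem \ref{thm:main}, and using the bound on $\epsilon_{0}(L)$ coming from $p$-very ampleness (\cite[Corollary 1.5]{KL1509}) in place of the one coming from property $N_{p}$. Set $a := (L^{2})$ and
\[
r_{\pm} := \frac{a \pm \sqrt{a^{2} - 4(p+1)^{2}a}}{2(p+1)},
\]
the two real roots of the quadratic $Q(x) := (p+1)x^{2} - ax + (p+1)a$; the hypothesis $a > (p+1)(p+2)^{2} \ge 4(p+1)^{2}$ guarantees the discriminant is positive, and $r_{-}r_{+} = a$ forces $r_{-} \le \sqrt{a} \le r_{+}$.

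For the implication $(1) \Rightarrow (2)$, the hypothesis $\epsilon(X,L) > r_{-}$ combined with the unconditional inequality $\epsilon(X,L) \le \sqrt{(L^{2})} \le r_{+}$ places $\epsilon(X,L)$ strictly between the two roots of $Q$, so $Q(\epsilon(X,L)) < 0$. Unraveling this is exactly the numerical inequality
\[
(L^{2}) \cdot (\epsilon(X,L) - p - 1) - (p+1) \cdot \epsilon(X,L)^{2} > 0,
\]
after which Corollary \ref{cor:p-very ampleness} delivers $p$-very ampleness. This direction should go through uniformly for all $p \ge 0$.

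For $(2) \Rightarrow (1)$, I would invoke the Bauer--Szemberg type lower bound on polarized abelian surfaces \cite{BS1},
\[
\epsilon(X,L) \ge \min\left\{\frac{\sqrt{7(L^{2})}}{2\sqrt{2}},\ \epsilon_{0}(L)\right\},
\]
and combine it with the implication ``$L$ is $p$-very ample $\Rightarrow \epsilon_{0}(L) \ge p+2$'' (which one can extract from \cite[Corollary 1.5]{KL1509}, or verify directly by restricting $H^{0}(X,L)$ to an elliptic curve of minimal $L$-degree and observing that $(p+1)$ sections cannot separate a length-$(p+1)$ subscheme on a line bundle of degree $\le p+1$). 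It then suffices to check the two algebraic inequalities $r_{-} < \sqrt{7(L^{2})}/(2\sqrt{2})$ and $r_{-} < p+2$. After squaring and simplifying, the second is equivalent to $(L^{2}) > (p+1)(p+2)^{2}$, which is precisely our hypothesis; the first follows from the identity $r_{-} = (L^{2})/r_{+}$ and a direct comparison between $r_{+}$ and $\sqrt{8(L^{2})/7}$.

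The main anticipated obstacle lies in the small-$p$ regime of this last comparison. A quick numerical check shows that for $p = 0$ the inequality $r_{-} < \sqrt{7(L^{2})}/(2\sqrt{2})$ fails when $(L^{2})$ is only slightly above the threshold $4$, so the Bauer--Szemberg bound in isolation is not tight enough to conclude in the boundary case. I would therefore handle $p = 0$ (and possibly $p = 1$) separately, either by recycling the ad hoc argument from the $p = 0$ part of the proof of Corollary \ref{cor:p-very ampleness} (invoking \cite[Theorem 1.1]{T} and the product-of-elliptic-curves classification), or by appealing to a strengthened lower bound on $\epsilon(X,L)$ available when $(L^{2})$ is close to its minimal allowed value.
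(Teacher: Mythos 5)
Your argument is correct, but in the direction $(2)\Rightarrow(1)$ for $p\ge 1$ it takes a genuinely different route from the paper. The paper does not rerun the proof of Theorem \ref{thm:large}: it reduces to checking $L^{2}\ge 5(p+1)^{2}$ (immediate from $(p+2)^{2}\ge 5(p+1)$ for $p\ge 2$, and from the evenness of $L^{2}$ for $p=1$), then uses the Reider-type equivalences \cite[Theorem 1.1 and Corollary 1.5]{KL1509} to identify $p$-very ampleness with property $N_{p-1}$, and finally applies Theorem \ref{thm:large} with $p-1$ in place of $p$, whose Seshadri threshold and hypothesis $L^{2}>(p+1)(p+2)^{2}$ coincide exactly with those in the statement; only the case $p=0$ is handled by redoing the \cite{BS1} argument. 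Your plan instead reruns the Theorem \ref{thm:large} argument uniformly with $p+2$ replaced by $p+1$, using only the elementary implication ``$p$-very ample $\Rightarrow\epsilon_{0}(L)\ge p+2$'' together with \cite{BS1}; this is more self-contained, at the cost of verifying $r_{-}<\sqrt{7(L^{2})}/(2\sqrt{2})$ directly. You correctly locate the delicate point: that inequality amounts to $L^{2}>\tfrac{225}{56}(p+1)^{2}$, which follows from $L^{2}>(p+1)(p+2)^{2}$ for every $p\ge 1$ but not for $p=0$ over the reals; however, since $L^{2}$ is an even integer, $L^{2}>4$ forces $L^{2}\ge 6>\tfrac{225}{56}$, so the $p=0$ case needs no extra input beyond $\epsilon_{0}(L)\ge 2$, which your appeal to \cite[Theorem 1.1]{T} supplies just as the paper does. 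One small correction: your parenthetical justification of $\epsilon_{0}(L)\ge p+2$ by a dimension count does not literally work when $(L.C)=p+1$, since then $h^{0}(C,L|_{C})=p+1$ equals the length of the subscheme; the right statement is that a line bundle of degree $d$ on an elliptic curve is $k$-very ample only if $d\ge k+2$ (take $Z$ to be a divisor in $|L|_{C}|$), or one can simply quote the $p$-very ample analogue of \cite[Theorem 4.1]{KL1509} as the paper does at the corresponding step of Theorem \ref{thm:large}.
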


\begin{proof}
The implication $(1) \Rightarrow (2)$ follows from the inequality $\sqrt{L^{2}}<\frac{(L^{2})+\sqrt{{(L^{2})}^{2}-4(p+1)^{2}(L^{2})}}{2(p+1)}$ and Corollary \ref{cor:p-very ampleness}, so we need to check the reverse one. For $p \ge 1$, it is sufficient to show that $L^{2} \ge 5(p+1)^{2}$ by \cite[Theorem 1.1 and Corollary 1.5]{KL1509} and Theorem \ref{thm:large}. It is clear for $p \ge 2$, so we may let $p=1$. In this case, $L^{2}>18$. However, since $L^{2}$ is even, we have $L^{2} \ge 20=5(p+1)^{2}$ for $p=1$ as wanted. 

Thus we are left to check it for $p=0$. By the similar argument used in the proof of Theorem \ref{thm:large}, it is sufficient to show the following:
\begin{align*}
\frac{(L^{2})-\sqrt{{(L^{2})}^{2}-4(L^{2})}}{2}< \min\{\frac{\sqrt{7(L^{2})}}{2\sqrt{2}}, \text{ }\epsilon_{0}(L)\},
\end{align*}
where $\epsilon_{0}(L)$ is the minimal degree of an elliptic curve in $X$ with respect to $L$. Since the inequality $\frac{(L^{2})-\sqrt{{(L^{2})}^{2}-4(L^{2})}}{2}<\frac{\sqrt{7(L^{2})}}{2\sqrt{2}}$ is trivial, we concentrate on the second one. Since $L$ is globally generated, \cite[Theorem 1.1]{T} gives the non-existence of an elliptic curve $C$ on $X$ satisfying $(L.C)=1$, that is, $\epsilon_{0}(L) \ge 2$. Now, it is easy to see that the inequality $\frac{(L^{2})-\sqrt{{(L^{2})}^{2}-4(L^{2})}}{2}<2$ holds. Hence we are done. 
\end{proof}

\end{subsection}

\end{section}

\begin{section} {Applications} \label{section:application}
When applying Theorem \ref{thm:LPP}, it has some restriction: it only depends on the lower bound of the Seshadri constant. However, since Theorem \ref{thm:main} has two variables: the Seshadri constant and the self-intersection number, it seems likely to be more flexible than Theorem \ref{thm:LPP} when we treat property $N_{p}$ for a polarized abelian surface. The main point of this section is to extend Corollary \ref{cor:Seshadri LPP} and Corollary \ref{cor:Koszul LPP} on a polarized abelian surface by applying Theorem \ref{thm:main} and Proposition \ref{prop:Koszul} (cf. Corollary \ref{cor:general criterion} and Corollary \ref{cor:Koszul extension}). Furthermore, we analyze higher syzygies of line bundles on the self-product of an elliptic curve without complex multiplication. 

\begin{subsection} {Higher syzygies on abelian surfaces of type $(1,d)$ with Picard number one}
Let $(X,L)$ be a polarized abelian surface of type $(1,d)$ with Picard number one. Its Seshadri constant is computed as follows. 

\begin{lem} {\rm (\cite[Theorem 6.1]{T99})} \label{lem:Seshadri constant with one}
Let $(X,L)$ be as above. 
\begin{enumerate}[(1)]
\item If $\sqrt{2d}$ is rational, then $\epsilon(X,L)=\sqrt{2d}$. 
\item If $\sqrt{2d}$ is irrational, then $\epsilon(X,L)=2d \cdot \frac{k_{0}}{l_{0}}=\frac{2d}{\sqrt{2d+\frac{1}{k_{0}^{2}}}}$, where $(k_{0}, l_{0})$ is the primitive solution of the Pell equation $l^{2}-2dk^{2}=1$. 
\end{enumerate}
\end{lem}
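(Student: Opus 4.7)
The plan is to exploit the Picard-rank-one hypothesis to reduce the computation of $\epsilon(X,L)$ to a Diophantine problem closely related to the Pell equation $l^{2}-2dk^{2}=1$.

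I would begin with the universal bound $\epsilon(X,L)\le\sqrt{L^{2}}=\sqrt{2d}$ on surfaces. Suppose $\epsilon(X,L)<\sqrt{2d}$. By Proposition \ref{prop:Seshadri surface}, there exists a Seshadri-exceptional irreducible curve $F$ with $\mathrm{mult}_{x}F=q$ at the (very general) point $x$ and $\epsilon(X,L)=(L.F)/q$. Since $\rho(X)=1$ and $L$ is primitive, $F\equiv nL$ for some $n\in\mathbb{Z}_{\ge 1}$; thus $(L.F)=2dn$, $F^{2}=2dn^{2}$, and the submaximality $(L.F)^{2}<L^{2}\cdot q^{2}$ rewrites as the integer inequality
\begin{align*}
q^{2}-2dn^{2}\ge 1.
\end{align*}
So $(k,l)=(n,q)$ satisfies $l^{2}-2dk^{2}\ge 1$, and minimizing $\epsilon=2dn/q$ reduces to maximizing $q/n$ over all admissible $(n,q)$.

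For part (1), when $\sqrt{2d}=m\in\mathbb{Z}_{\ge 2}$, write $q=mn+s$ with $s\ge 1$; then $q^{2}-m^{2}n^{2}=s(2mn+s)\ge 2mn$. Comparing with the upper bound furnished by the arithmetic genus of $F$---using that $X$ contains no rational curves, whence $g_{a}(F)\ge 1+q(q-1)/2$, which rearranges to $q^{2}-2dn^{2}\le q=mn+s$---one obtains $s(2mn+s)\le mn+s$, impossible for $s\ge 1$ and $m\ge 2$. Hence no Seshadri-exceptional curve exists and $\epsilon(X,L)=\sqrt{2d}$.

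For part (2), when $\sqrt{2d}$ is irrational, the Pell equation $l^{2}-2dk^{2}=1$ has infinitely many solutions, and its primitive solution $(k_{0},l_{0})$ has the largest ratio $l/k$: successive solutions arise from powers of the fundamental unit $l_{0}+k_{0}\sqrt{2d}$, and a direct computation gives $l_{n+1}/k_{n+1}<l_{n}/k_{n}$, with the sequence converging to $\sqrt{2d}$ from above. One then verifies that no non-Pell pair $(n,q)$ with $q^{2}-2dn^{2}\ge 2$ corresponds to an irreducible curve on $X$ with ratio $q/n$ exceeding $l_{0}/k_{0}$. Thus $\epsilon(X,L)=2dk_{0}/l_{0}$, and the equivalent expression $2d/\sqrt{2d+1/k_{0}^{2}}$ follows immediately from $l_{0}^{2}=2dk_{0}^{2}+1$.

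The principal obstacle lies in part (2): one must both produce the Seshadri-exceptional irreducible curve $F\equiv k_{0}L$ with multiplicity $l_{0}$ at a general point, and exclude non-Pell competitors whose mere numerical existence is consistent with the arithmetic-genus bound. Both tasks rely essentially on the Picard-rank-one hypothesis and are handled in Bauer's proof \cite{T99} via a detailed dimension count for linear systems on a very general $(1,d)$-polarized abelian surface, together with a period-theoretic construction of the Pell curve.
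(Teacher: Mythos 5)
This lemma is quoted verbatim from Bauer \cite[Theorem 6.1]{T99}; the paper supplies no proof of its own, so there is nothing internal to compare against. Your reduction is the right one: with $\rho(X)=1$ and $L$ primitive of type $(1,d)$, every irreducible curve is numerically $nL$, submaximality becomes the integer condition $q^{2}-2dn^{2}\ge 1$, and for part (1) the adjunction bound $p_{a}(F)=1+dn^{2}\ge g(\tilde F)+q(q-1)/2$ together with $g(\tilde F)\ge 1$ (no rational curves on an abelian surface) gives $q^{2}-2dn^{2}\le q$, which is indeed incompatible with $q\ge mn+1$ when $\sqrt{2d}=m\in\mathbb{Z}$. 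That part is essentially complete, as are the monotonicity of $l/k$ along Pell solutions and the identity $2dk_{0}/l_{0}=2d/\sqrt{2d+1/k_{0}^{2}}$.

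The genuine gaps are both in part (2), and they are not routine verifications. First, the exclusion of non-Pell competitors cannot be done with the arithmetic-genus bound you invoke: even its sharpened form $q^{2}-2dn^{2}\le q-2$ (using $g(\tilde F)\ge 2$ for singular irreducible curves on an abelian surface) still admits pairs that would beat the Pell ratio. For $d=5$ one has $(k_{0},l_{0})=(6,19)$ and $\epsilon(X,L)=60/19$, yet $(n,q)=(5,16)$ satisfies $q^{2}-10n^{2}=6\le q-2$ and gives $2dn/q=25/8<60/19$; a curve realizing this pair is therefore not excluded by genus considerations, and a genuinely stronger multiplicity versus self-intersection inequality is required. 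Second, the existence of the Seshadri-exceptional curve $F\equiv k_{0}L$ with an $l_{0}$-fold point is not obtainable by a naive parameter count: $h^{0}(k_{0}L)=dk_{0}^{2}=(l_{0}^{2}-1)/2$ is strictly smaller than the $l_{0}(l_{0}+1)/2$ conditions imposed by such a singularity, so the linear system does not visibly contain such a member and Bauer's construction is more delicate. You flag both difficulties honestly, but as written the proposal proves part (1) and only the elementary Diophantine shell of part (2); the substantive content of (2) remains imported from \cite{T99}.
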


By Lemma \ref{lem:Seshadri constant with one}, we find the lower bound of $d$ satisfying property $N_{p}$ for $(X,L)$ by Theorem \ref{thm:main} and Corollary \ref{cor:generalization LPP}. 

\begin{prop} \label{prop:Picard number one}
Let $(X,L)$ be a polarized abelian surface of type $(1,d)$ with Picard number one. If $d \ge 2(p+2)^{2}$, then $L$ satisfies property $N_{p}$. 
\end{prop}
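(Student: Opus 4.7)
The plan is to use the explicit formula for the Seshadri constant provided by Lemma \ref{lem:Seshadri constant with one} and feed it into the two numerical criteria available: Corollary \ref{cor:generalization LPP} and Theorem \ref{thm:main}. Since $(X,L)$ has type $(1,d)$, Riemann--Roch gives $(L^{2})=2d$, so the hypothesis reads $(L^{2})\ge 4(p+2)^{2}$, and it is natural to split on whether $\sqrt{2d}$ is rational.

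I would first dispose of the rational case: if $\sqrt{2d}\in\QQ$, then Lemma \ref{lem:Seshadri constant with one}(1) yields $\epsilon(X,L)=\sqrt{2d}\ge 2(p+2)$, and Corollary \ref{cor:generalization LPP} immediately forces property $N_{p}$. The equality case $d=2(p+2)^{2}$ lives here, since $\sqrt{2d}=2(p+2)\in\ZZ$; consequently in the remaining irrational case one automatically has $d\ge 2(p+2)^{2}+1$.

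For the irrational case, Lemma \ref{lem:Seshadri constant with one}(2) gives $\epsilon(X,L)=2dk_{0}/l_{0}$, where $(k_{0},l_{0})$ is the primitive solution of the Pell equation $l^{2}-2dk^{2}=1$. I would substitute $\epsilon=2dk_{0}/l_{0}$ and $\epsilon^{2}=4d^{2}k_{0}^{2}/(2dk_{0}^{2}+1)$ into the inequality
\[
(L^{2})(\epsilon-p-2)-(p+2)\epsilon^{2}>0
\]
of Theorem \ref{thm:main}, clear denominators using $l_{0}^{2}=2dk_{0}^{2}+1$, and square the resulting positive inequality once. Setting $u:=2dk_{0}^{2}$, the condition should collapse to the clean polynomial inequality
\[
2\bigl(d-2(p+2)^{2}\bigr)\,u(u+1)>(p+2)^{2}.
\]
Since $d-2(p+2)^{2}\ge 1$ and $k_{0}\ge 1$ forces $u\ge 2d\ge 4(p+2)^{2}+2$, the left side is at least $32(p+2)^{4}$, which dwarfs $(p+2)^{2}$ for every $p\ge 0$, and Theorem \ref{thm:main} concludes the argument.

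The main obstacle I anticipate is the algebraic reduction in the irrational case: bookkeeping through the substitution and the single squaring step, while checking the positivity needed to legitimately square. Once the polynomial inequality in $u$ is in hand, the numerical verification is immediate, and the only structural input required is the trivial lower bound $k_{0}\ge 1$ from the existence of a nontrivial Pell solution.
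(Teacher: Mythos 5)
Your proposal is correct, and its skeleton is the same as the paper's: compute $(L^{2})=2d$, split on whether $\sqrt{2d}$ is rational, and in the rational case conclude from $\epsilon(X,L)=\sqrt{2d}\ge 2(p+2)$ via Corollary \ref{cor:generalization LPP}, noting that the boundary value $d=2(p+2)^{2}$ falls into this case. The only divergence is in the irrational case. The paper stays with Corollary \ref{cor:generalization LPP} there as well: it bounds $\epsilon(X,L)=2d/\sqrt{2d+1/k_{0}^{2}}\ge 2d/\sqrt{2d+1}$, uses that this is increasing in $d$ with $d\ge 2(p+2)^{2}+1$, and checks $2(2(p+2)^{2}+1)/\sqrt{4(p+2)^{2}+3}>2(p+2)$, which reduces to $(p+2)^{2}+1>0$. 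You instead feed the exact Pell-equation value of $\epsilon$ into Theorem \ref{thm:main}; your algebra checks out: with $u=2dk_{0}^{2}$ and $\epsilon^{2}=2du/(u+1)$, the hypothesis $(L^{2})(\epsilon-p-2)-(p+2)\epsilon^{2}>0$ becomes, after dividing by $2d$ and squaring the positive inequality $\epsilon>(p+2)(2u+1)/(u+1)$, the condition $2du(u+1)>(p+2)^{2}(2u+1)^{2}$, i.e.\ $2\bigl(d-2(p+2)^{2}\bigr)u(u+1)>(p+2)^{2}$, which holds with a large margin since $d-2(p+2)^{2}\ge 1$ and $u\ge 2d\ge 4(p+2)^{2}+2$. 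Both routes are valid; the paper's is lighter because it never needs the refined criterion of Theorem \ref{thm:main} here, while yours carries more bookkeeping through the Pell data but lands on a pleasantly clean polynomial inequality.
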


\begin{proof}
First, consider the case when $\sqrt{2d}$ is rational. By Lemma \ref{lem:Seshadri constant with one}, $\epsilon(X,L)=\sqrt{2d}$. Since $d \ge 2(p+2)^{2}$, $\epsilon(X,L) \ge 2(p+2)$. By Corollary \ref{cor:generalization LPP}, $L$ satisfies property $N_{p}$. Next, assume that $\sqrt{2d}$ is irrational. Also, by Lemma \ref{lem:Seshadri constant with one}, $\epsilon(X,L)=\frac{2d}{\sqrt{2d+\frac{1}{k_{0}^{2}}}}$. Since $\sqrt{2d}$ is irrational, $d \ge 2(p+2)^{2}+1$. Then we have 
\begin{align*}
\epsilon(X,L)=\frac{2d}{\sqrt{2d+\frac{1}{k_{0}^{2}}}} \ge \frac{2d}{\sqrt{2d+1}} \ge \frac{2(2(p+2)^{2}+1)}{\sqrt{4(p+2)^{2}+3}}>2(p+2)
\end{align*}
since $\frac{2d}{\sqrt{2d+1}}$ has its minimum at $d=2(p+2)^{2}+1$. By Corollary \ref{cor:generalization LPP} or Theorem \ref{thm:LPP}, we are done. 
\end{proof}

\begin{rmk} 
Theorem \ref{thm:LPP} implies that $L$ satisfies property $N_{p}$ if $d>2(p+2)^{2}$ (cf. \cite[Corollary B]{LPP}). Comparing it with Proposition \ref{prop:Picard number one}, Theorem \ref{thm:main} does not look quite better than Theorem \ref{thm:LPP}. The main reason for this situation is because $\epsilon(X,L) \approx \sqrt{2d}=\sqrt{(L^{2})}$ so that the condition on Theorem \ref{thm:main} becomes similar to $\epsilon(X,L)>2(p+2)$, i.e. 
\begin{align*}
(L^{2}) \cdot (\epsilon(X,L)-p-2)-(p+2) \cdot \epsilon(X,L)^{2} \approx (L^{2}) \cdot (\epsilon(X,L)-2(p+2)).
\end{align*}
Thus Theorem \ref{thm:main} becomes more powerful than Theorem \ref{thm:LPP} when $\sqrt{(L^{2})}-\epsilon(X,L)$ is large. (See the next subsections.) 
\end{rmk}

\end{subsection}

\begin{subsection} {Higher syzygies on very general polarized abelian surfaces}
Let $(X,L)$ be a very general polarized abelian surface of type $(d_{1},d_{2})$ (with no restriction on its Picard number). In this case, Bauer (\cite{T97}) found the lower bound of $\epsilon(X,L)$ as follows.

\begin{lem} {\rm (\cite[Theorem 1.(b)]{T97})} \label{lem:lower bound}
With notation as above, 
\begin{align*}
\epsilon(X,L) \ge \frac{1}{2} \sqrt{d_{1}d_{2}}.
\end{align*}
\end{lem}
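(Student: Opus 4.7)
The plan is to reduce to analyzing a Seshadri-exceptional curve on $X$ and to exploit the fact that an abelian variety contains no rational curves. By the translation invariance of $X$, the local Seshadri constant $\epsilon(X,L;x)$ is independent of $x$, so I may fix any point $x\in X$. Either $\epsilon(X,L)=\sqrt{L^{2}}=\sqrt{2d_{1}d_{2}}$, in which case $\epsilon(X,L)>\tfrac{1}{2}\sqrt{d_{1}d_{2}}$ is immediate; or the Seshadri constant is submaximal, and a standard Seshadri criterion (of the kind underlying Proposition \ref{prop:Seshadri surface}) produces an irreducible reduced curve $C\subset X$ through $x$ with $q:={\rm mult}_{x}(C)$ and $\epsilon(X,L)=(L\cdot C)/q$.

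I would then combine the genus bound with the Hodge index theorem. Since $X$ is abelian it contains no rational curves, so the normalization $\widetilde{C}$ satisfies $p_{g}(\widetilde{C})\ge 1$. Combined with the inequality $p_{a}(C)-p_{g}(\widetilde{C})\ge\binom{q}{2}$ and the adjunction formula $p_{a}(C)=1+\tfrac{1}{2}C^{2}$ on the abelian surface (since $K_{X}=0$), this yields $C^{2}\ge q(q-1)$. Hodge index then gives
\[
  \epsilon(X,L)^{2}=\frac{(L\cdot C)^{2}}{q^{2}}\;\ge\;\frac{L^{2}\cdot C^{2}}{q^{2}}\;\ge\;2d_{1}d_{2}\cdot\frac{q-1}{q}.
\]
For $q\ge 2$ this already delivers $\epsilon(X,L)\ge\sqrt{d_{1}d_{2}}$, comfortably beating the claimed bound.

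The delicate case is $q=1$, where $C$ is smooth at $x$ and $\epsilon(X,L)=L\cdot C$. Here the \emph{very general} hypothesis is essential: outside a countable union of proper subvarieties in the moduli of polarized abelian surfaces of type $(d_{1},d_{2})$, the Picard number equals one, so every irreducible curve $C$ is numerically proportional to the ample generator of $NS(X)$. In particular $X$ carries no elliptic curves (no classes with $C^{2}=0$), and since $C^{2}$ is even on an abelian surface we conclude $C^{2}\ge 2$. Hodge index then produces $(L\cdot C)^{2}\ge 2L^{2}=4d_{1}d_{2}$, hence $\epsilon(X,L)\ge 2\sqrt{d_{1}d_{2}}$.

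The main obstacle I foresee is making the moduli-theoretic step in the $q=1$ case precise: one must invoke that the $\rho\ge 2$ locus inside the moduli space of polarized abelian surfaces of a fixed type is genuinely a countable union of proper subvarieties, so that "very general" can be interpreted as its complement. I would cite this standard fact about Shimura varieties parametrizing abelian surfaces rather than re-derive it; the bookkeeping is handled in Bauer's original argument in \cite{T97}, and the bound $\tfrac{1}{2}\sqrt{d_{1}d_{2}}$ comes out as a clean common lower estimate across both the $q\ge 2$ and $q=1$ branches.
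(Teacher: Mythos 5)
Your argument is correct, but it is a genuinely different proof from the one the paper relies on: the paper does not prove this lemma, it imports it as Theorem~1(b) of Bauer \cite{T97}, whose proof is metric --- one bounds the Seshadri constant from below in terms of the minimal period length $m(A,L)$ of the polarized lattice (following Lazarsfeld \cite{L96}) and then estimates $m(A,L)$ for a very general period matrix by a Minkowski-type count; that route works in every dimension $g$ and yields the constant $\frac{1}{4}\sqrt[g]{2(L^{g})}$. Your route is surface-specific and purely intersection-theoretic: a submaximal Seshadri constant is computed by a Seshadri-exceptional curve (this is exactly \cite[Proposition~4.2]{KL1411}, i.e.\ Proposition~\ref{prop:Seshadri surface}, which you can cite instead of ``a standard criterion''), the no-rational-curves genus bound gives $C^{2}\ge q(q-1)$, and Hodge index converts this into $\epsilon^{2}\ge L^{2}\cdot\frac{q-1}{q}$; the multiplicity-one branch is disposed of because a very general member of the moduli space has $\rho=1$ and hence carries no elliptic curves (a standard fact about the Humbert loci that you correctly flag as needing a citation rather than a re-derivation). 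Note that your $q=1$ bound $\epsilon\ge 2\sqrt{d_{1}d_{2}}$ exceeds the universal upper bound $\sqrt{2d_{1}d_{2}}$, so that case is in fact vacuous, which is consistent. What your approach buys is a strictly better constant: you obtain $\epsilon(X,L)\ge\sqrt{d_{1}d_{2}}=\sqrt{L^{2}/2}$ rather than $\frac{1}{2}\sqrt{d_{1}d_{2}}$. This is precisely the bound the paper extracts in the proof of Corollary~\ref{cor:general criterion}(2) --- there via the Newton--Okounkov inclusion $\Delta_{x}(L)\subseteq\Delta_{ODR}$, which encodes the same inequality $L^{2}\le t^{2}/q(q-1)$ --- under the hypothesis $\epsilon(X,L)\notin\mathbb{Z}$, which forces $q\ge 2$; your observation that very generality also kills the $q=1$ branch would allow the threshold $\frac{81}{8}(p+2)^{2}$ in Corollary~\ref{cor:general criterion}(1) to be replaced by $\frac{9}{2}(p+2)^{2}$.
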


By Lemma \ref{lem:lower bound} and Theorem \ref{thm:main}, we obtain Corollary \ref{cor:general criterion}, which gives a nice lower bound of $L^{2}$ for higher syzygies of $(X,L)$. 

\begin{cor} \label{cor:general criterion}
Let $(X,L)$ be a polarized abelian surface, and let $p \ge 0$ be an integer. 
\begin{enumerate}[(1)]
\item Assume that $(X,L)$ is very general. If $L^{2}>\frac{81}{8}(p+2)^{2}$, then $L$ satisfies property $N_{p}$. In particular, the converse holds if $L^{2}>(p+2)(p+3)^{2}$. 
\item Assume that $\epsilon(X,L) \notin \mathbb{Z}$. If $L^{2}>\frac{9}{2}(p+2)^{2}$, then $L$ satisfies property $N_{p}$.
\end{enumerate}
\end{cor}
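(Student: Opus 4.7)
The plan is to derive both assertions from the numerical criterion of Theorem \ref{thm:main} by feeding in the best available lower bound on $\epsilon(X,L)$. Write $f(\epsilon):=(L^{2})(\epsilon-p-2)-(p+2)\epsilon^{2}$, a concave quadratic in $\epsilon$ with roots $\epsilon_{\pm}=\frac{(L^{2})\pm\sqrt{(L^{2})^{2}-4(p+2)^{2}(L^{2})}}{2(p+2)}$; the goal is to force $\epsilon(X,L)\in(\epsilon_{-},\epsilon_{+})$. The universal inequality $\epsilon(X,L)\le\sqrt{L^{2}}$ combined with the observation that $\sqrt{L^{2}}<\epsilon_{+}$ whenever $L^{2}>4(p+2)^{2}$ handles the upper endpoint, and the numerical hypotheses of (1) and (2) each comfortably exceed $4(p+2)^{2}$. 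Therefore the argument reduces to producing a lower bound $\epsilon_{\mathrm{lo}}\le\epsilon(X,L)$ with $f(\epsilon_{\mathrm{lo}})>0$.

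For (1), Lemma \ref{lem:lower bound} combined with $L^{2}=2d_{1}d_{2}$ gives $\epsilon(X,L)\ge\tfrac{\sqrt{L^{2}}}{2\sqrt{2}}$. A direct computation shows $f\bigl(\tfrac{\sqrt{L^{2}}}{2\sqrt{2}}\bigr)=\tfrac{L^{2}}{8}\bigl(2\sqrt{2}\sqrt{L^{2}}-9(p+2)\bigr)$, which is strictly positive exactly when $L^{2}>\tfrac{81}{8}(p+2)^{2}$, so Theorem \ref{thm:main} delivers property $N_{p}$. For the converse under the stronger assumption $L^{2}>(p+2)(p+3)^{2}$, I would appeal to Theorem \ref{thm:large}, which provides a genuine equivalence of property $N_{p}$ with $\epsilon(X,L)>\epsilon_{-}$; a short calculation shows that at $L^{2}=\tfrac{81}{8}(p+2)^{2}$ the Bauer lower bound $\tfrac{\sqrt{L^{2}}}{2\sqrt{2}}$ agrees exactly with $\epsilon_{-}=\tfrac{9(p+2)}{8}$, which is what makes $\tfrac{81}{8}(p+2)^{2}$ the sharp threshold compatible with Bauer's estimate.

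For (2), the substantive ingredient is an auxiliary estimate: if $\epsilon(X,L)\notin\mathbb{Z}$, then $\epsilon(X,L)\ge\sqrt{(L^{2})/2}$. The maximal case $\epsilon(X,L)=\sqrt{L^{2}}$ is immediate, and otherwise Proposition \ref{prop:Seshadri surface} produces a Seshadri exceptional curve $F$ with $(L.F)=p'$, $\mathrm{mult}_{x}F=q$, and $\epsilon(X,L)=p'/q$. The hypothesis $\epsilon(X,L)\notin\mathbb{Z}$ forces $q\ge 2$, so case (1) of Proposition \ref{prop:Seshadri surface} places $\Delta_{x}(L)$ inside the triangle with vertices $(0,0),(p'/q,p'/q),(p'/(q-1),0)$ of area $\tfrac{(p')^{2}}{2q(q-1)}$; comparing with $\mathrm{vol}_{\mathbb{R}^{2}}\Delta_{x}(L)=\tfrac{L^{2}}{2}$ rearranges to $\epsilon(X,L)^{2}\ge L^{2}(q-1)/q\ge L^{2}/2$. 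Plugging this into $f$ yields $f(\sqrt{L^{2}/2})=\tfrac{L^{2}}{2}\bigl(\sqrt{2L^{2}}-3(p+2)\bigr)$, strictly positive when $L^{2}>\tfrac{9}{2}(p+2)^{2}$, and Theorem \ref{thm:main} closes the proof. The main obstacle is precisely this Newton-Okounkov estimate: isolating the Seshadri exceptional curve in the $\epsilon\notin\mathbb{Z}$ case and extracting the sharp volume inequality from the containment $\Delta_{x}(L)\subseteq\Delta_{OAB}$ is the only step that genuinely goes beyond the ingredients already assembled in Section \ref{section:preliminary}.
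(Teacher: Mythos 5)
Your argument is correct and follows essentially the same route as the paper: Bauer's bound $\epsilon(X,L)\ge\tfrac{1}{2}\sqrt{d_1d_2}=\tfrac{\sqrt{L^2}}{2\sqrt{2}}$ for (1), the estimate $\epsilon(X,L)\ge\sqrt{L^2/2}$ via the Seshadri exceptional curve and Proposition \ref{prop:Seshadri surface} for (2), both fed into Theorem \ref{thm:main}, with the converse in (1) delegated to Theorem \ref{thm:large}. Your packaging of the final step via the roots $\epsilon_{\pm}$ (rather than the paper's monotonicity of $\alpha(t)$ on the relevant interval) is an equivalent reformulation, and your computations of $f$ at the two lower bounds match the paper's.
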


\begin{proof}
$(1)$ Let $(d_{1},d_{2})$ be a type of $L$. By Lemma \ref{lem:lower bound}, note that $\frac{1}{2}\sqrt{d_{1}d_{2}} \le \epsilon(X,L) \le \sqrt{2d_{1}d_{2}}$. Consider a function $\alpha(t)$ on $[\frac{1}{2}\sqrt{d_{1}d_{2}}, \sqrt{2d_{1}d_{2}}]$ defined by 
\begin{align*}
\alpha(t):&=(L^{2})(t-p-2)-(p+2)t^{2}=-(p+2)t^{2}+(L^{2})t-(p+2)(L^{2}).
\end{align*}
Clearly, we can consider $\alpha(t)$ as a function on $\mathbb{R}$ and in this case, it has its maximum value at $t=\frac{(L^{2})}{2(p+2)}=\frac{d_{1}d_{2}}{p+2}$. Since $d_{1}d_{2}>\frac{81}{16}(p+2)^{2}>2(p+2)^{2}$, $\sqrt{2d_{1}d_{2}}\le \frac{d_{1}d_{2}}{p+2}$. So since $\epsilon(X,L) \in [\frac{1}{2}\sqrt{d_{1}d_{2}}, \sqrt{2d_{1}d_{2}}]$, $\alpha(\epsilon(X,L)) \ge \alpha(\frac{1}{2}\sqrt{d_{1}d_{2}})$. However, also since $d_{1}d_{2}>\frac{81}{16}(p+2)^{2}$, 
\begin{align*}
\alpha(\frac{1}{2}\sqrt{d_{1}d_{2}})&=2d_{1}d_{2} \cdot (\frac{1}{2}\sqrt{d_{1}d_{2}}-p-2)-(p+2)\cdot \frac{1}{4}d_{1}d_{2} =d_{1}d_{2}(\sqrt{d_{1}d_{2}}-\frac{9}{4}(p+2))>0,
\end{align*}
i.e. $\alpha(\epsilon(X,L)) > 0$. By Theorem \ref{thm:main}, $L$ satisfies property $N_{p}$. The second statement in $(1)$ is a consequence of Theorem \ref{thm:large}.

$(2)$ Assume that $\epsilon(X,L) \notin \mathbb{Z}$ and $d_{1}d_{2}>\frac{9}{4}(p+2)^{2}$. Note that $\epsilon(X,L) \in \mathbb{Q}-\mathbb{Z}$ by \cite[Theorem 6.4.5]{TSBMAWT} and that $\epsilon(X,L)=\epsilon(L;x)$ for a very general point $x \in X$. We may assume that $\epsilon(X,L)$ is submaximal so that $\epsilon(X,L)=\frac{t}{q}$, where $t=(L.C)$ and $q={\rm mult}_{x}(C)$ for a Seshadri exceptional curve $C$ of $L$ on $x$. Since $\epsilon(X,L) \in \mathbb{Q}-\mathbb{Z}$, $q \ge 2$. Now, consider the generic infinitesimal Newton-Okounkov body $\Delta_{x}(L)$. By Proposition \ref{prop:Seshadri surface}, $\Delta_{x}(L) \subseteq \Delta_{ODR}$, where $O=(0,0)$, $D=(\frac{t}{q}, \frac{t}{q})$, and $R=(\frac{t}{q-1},0)$. Thus, $\frac{1}{2}(L^{2}) \le \frac{1}{2} \cdot \frac{t^{2}}{q(q-1)}$. Since $\frac{t^{2}}{q(q-1)}=\epsilon(X,L)^{2} \cdot \frac{q}{q-1}$, $\epsilon(X,L) \ge \sqrt{\frac{q-1}{q}(L^{2})}$. Since $\frac{q-1}{q}$ is an increasing function with $q \ge 2$, 
\begin{align*}
\epsilon(X,L) \ge \sqrt{\frac{q-1}{q}(L^{2})} \ge \sqrt{\frac{1}{2}(L^{2})},
\end{align*}
i.e. $\sqrt{\frac{1}{2}(L^{2})} \le \epsilon(X,L) \le \sqrt{(L^{2})}$. As in the proof of $\textit{(1)}$, since $d_{1}d_{2} \ge 2(p+2)^{2}$, $\alpha(t)$ is an increasing function on $[\sqrt{\frac{1}{2}(L^{2})}, \sqrt{(L^{2})}]=[\sqrt{d_{1}d_{2}}, \sqrt{2d_{1}d_{2}}]$, i.e. $\alpha(\epsilon(X,L)) \ge \alpha(\sqrt{d_{1}d_{2}})$. Since $d_{1}d_{2}>\frac{9}{4}(p+2)^{2}$, 
\begin{align*}
\alpha(\sqrt{d_{1}d_{2}})&=2d_{1}d_{2} \cdot (\sqrt{d_{1}d_{2}}-p-2)-(p+2)d_{1}d_{2} =d_{1}d_{2}(2 \sqrt{d_{1}d_{2}}-3(p+2))>0,
\end{align*}
i.e. $\alpha(\epsilon(X,L))>0$. By Theorem \ref{thm:main}, $L$ satisfies property $N_{p}$. 
\end{proof}

\begin{rmk} \label{rmk:comparison with LPP}
Note that Theorem \ref{thm:LPP} implies that property $N_{p}$ holds for $L$ if $L^{2}>32(p+2)^{2}$ (or see \cite[Corollary B]{LPP}). However, Corollary \ref{cor:general criterion} says that the same conclusion holds even when $L^{2}>\frac{81}{8}(p+2)^{2}$, which gives a better bound. Moreover, it is immediate that property $N_{p}$ holds not only for $L$ but also for $L+F$, where $F$ is any effective divisor on $X$, under these circumstances. 
\end{rmk}

The next application has a more classical flavor as it deals with the multiples of ample divisors (cf. \cite{P, PP}). The following result implies that the multiple needed for the higher syzygies of $L$ can be reduced as $L^{2}$ increases. 

\begin{cor} \label{cor:p+2}
Let $(X,L)$ be a polarized abelian surface, and let $p \ge 0$ be an integer. 
\begin{enumerate}[(1)]
\item Assume that $(X,L)$ is very general. Then $L^{\otimes \lceil \frac{9}{2\sqrt{2(L^{2})}} (p+2) \rceil}$ satisfies property $N_{p}$.
\item Assume that $\epsilon(X,L) \notin \mathbb{Z}$. Then $L^{\otimes \lceil \frac{3}{\sqrt{2(L^{2})}} (p+2) \rceil}$ satisfies property $N_{p}$. 
\end{enumerate}
\end{cor}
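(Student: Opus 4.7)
The plan is to apply Theorem \ref{thm:main} (via the analysis carried out in Corollary \ref{cor:general criterion}) to the line bundle $M := L^{\otimes k}$, where $k$ is the ceiling prescribed in each part of the statement. Two scaling identities drive the whole reduction: $M^{2} = k^{2} L^{2}$ and $\epsilon(X, M) = k \cdot \epsilon(X, L)$.

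For (1), I would first invoke Lemma \ref{lem:lower bound}, which, under the very general hypothesis on $(X, L)$, gives $\epsilon(X, L) \geq \tfrac{1}{2\sqrt{2}} \sqrt{L^{2}}$. Multiplying by $k$ yields $\epsilon(X, M) \geq \tfrac{1}{2\sqrt{2}} \sqrt{M^{2}}$, which is precisely the inequality exploited in the proof of Corollary \ref{cor:general criterion}(1). The remaining algebraic condition $M^{2} > \tfrac{81}{8}(p+2)^{2}$ unpacks to $k > \tfrac{9(p+2)}{2\sqrt{2 L^{2}}}$, which the stated ceiling produces; Corollary \ref{cor:general criterion}(1) then applies to $M$ and delivers property $N_{p}$ for $L^{\otimes k}$.

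For (2), one cannot simply invoke Corollary \ref{cor:general criterion}(2) for $M$, because $\epsilon(X, M) = k \cdot \epsilon(X, L)$ may well land in $\mathbb{Z}$ even when $\epsilon(X, L)$ does not. My plan is instead to retrace the proof of that corollary: the non-integrality of $\epsilon(X, L)$ is used only through Proposition \ref{prop:Seshadri surface} and the generic infinitesimal Newton-Okounkov body, yielding the bound $\epsilon(X, L) \geq \sqrt{\tfrac{1}{2} L^{2}}$. This bound scales to $\epsilon(X, M) \geq \sqrt{\tfrac{1}{2} M^{2}}$, so the remainder of the proof of Corollary \ref{cor:general criterion}(2) carries over verbatim with $M$ in place of $L$; it requires $M^{2} > \tfrac{9}{2}(p+2)^{2}$, equivalently $k > \tfrac{3(p+2)}{\sqrt{2 L^{2}}}$, which again the ceiling ensures.

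The main subtlety is this last observation in (2): the hypothesis $\epsilon \notin \mathbb{Z}$ is not inherited by powers, but its quantitative consequence---the lower bound $\epsilon \geq \sqrt{L^{2}/2}$---is. Once this is recognized, both parts reduce to checking positivity of the quadratic $\alpha(t) = M^{2}(t - p - 2) - (p+2) t^{2}$ on the interval $[\tfrac{1}{2\sqrt{2}}\sqrt{M^{2}}, \sqrt{M^{2}}]$ in case (1) and $[\sqrt{M^{2}/2}, \sqrt{M^{2}}]$ in case (2), and this positivity is exactly what the ceiling is calibrated to guarantee.
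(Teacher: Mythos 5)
Your proposal is correct and is exactly the intended argument (the paper states Corollary \ref{cor:p+2} without proof, as an immediate consequence of Corollary \ref{cor:general criterion} applied to $L^{\otimes k}$ via the scalings $M^{2}=k^{2}L^{2}$ and $\epsilon(X,M)=k\,\epsilon(X,L)$). You also correctly identify and resolve the one real subtlety in part (2): the hypothesis $\epsilon(X,L)\notin\mathbb{Z}$ need not pass to powers, but its quantitative consequence $\epsilon(X,L)\ge\sqrt{(L^{2})/2}$ does scale, which is all that the proof of Corollary \ref{cor:general criterion}(2) actually uses.
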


Finally, the Koszulness of $R(X,L)$ is obtained in the same manner.

\begin{cor} \label{cor:Koszul extension}
Let $(X,L)$ be a very general polarized abelian surface. If $(L^{2})>\frac{729}{8}$, then $R(X,L)$ is Koszul. 
\end{cor}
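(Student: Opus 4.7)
The plan is to mimic the proof of Corollary \ref{cor:general criterion}(1), substituting Proposition \ref{prop:Koszul} for Theorem \ref{thm:main}. Note that the Koszulness criterion in Proposition \ref{prop:Koszul} is exactly the property $N_{p}$ criterion of Theorem \ref{thm:main} with $p+2$ replaced by $3$, so the bound $\frac{729}{8} = \frac{81}{8}\cdot 9 = \frac{81}{8}(p+2)^{2}\big|_{p=1}$ is the same arithmetic specialization.

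Let $(d_{1},d_{2})$ denote the type of $L$, so that $(L^{2}) = 2d_{1}d_{2}$, and the assumption $(L^{2}) > \frac{729}{8}$ becomes $d_{1}d_{2} > \frac{729}{16}$. Since $(X,L)$ is very general, Lemma \ref{lem:lower bound} gives
\begin{align*}
\tfrac{1}{2}\sqrt{d_{1}d_{2}} \le \epsilon(X,L) \le \sqrt{2d_{1}d_{2}}.
\end{align*}
Define $\alpha(t) := (L^{2})(t-3) - 3t^{2} = -3t^{2} + (L^{2})t - 3(L^{2})$; by Proposition \ref{prop:Koszul}, it suffices to verify $\alpha(\epsilon(X,L)) > 0$.

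The quadratic $\alpha$ attains its maximum at $t = (L^{2})/6 = d_{1}d_{2}/3$. Since $d_{1}d_{2} > \frac{729}{16} > 18$, one checks $\sqrt{2d_{1}d_{2}} \le d_{1}d_{2}/3$, so $\alpha$ is increasing on the interval $[\tfrac{1}{2}\sqrt{d_{1}d_{2}}, \sqrt{2d_{1}d_{2}}]$ that contains $\epsilon(X,L)$. Thus it is enough to evaluate $\alpha$ at the left endpoint:
\begin{align*}
\alpha\!\left(\tfrac{1}{2}\sqrt{d_{1}d_{2}}\right) = 2d_{1}d_{2}\!\left(\tfrac{1}{2}\sqrt{d_{1}d_{2}} - 3\right) - 3 \cdot \tfrac{1}{4}d_{1}d_{2} = d_{1}d_{2}\!\left(\sqrt{d_{1}d_{2}} - \tfrac{27}{4}\right),
\end{align*}
which is strictly positive precisely when $d_{1}d_{2} > \tfrac{729}{16}$, i.e.\ under our hypothesis.

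There is no real obstacle here: the computation is a one-variable reduction that was already engineered into Proposition \ref{prop:Koszul} and Lemma \ref{lem:lower bound}. The only point meriting care is the monotonicity of $\alpha$ on the relevant interval, which rests on the numerical inequality $d_{1}d_{2} \ge 18$ that is comfortably implied by $d_{1}d_{2} > \tfrac{729}{16}$; everything else is immediate substitution.
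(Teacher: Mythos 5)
Your proof is correct and is exactly the argument the paper intends: the paper omits the proof with the remark that Koszulness "is obtained in the same manner," meaning the proof of Corollary \ref{cor:general criterion}(1) with Proposition \ref{prop:Koszul} in place of Theorem \ref{thm:main}, i.e.\ the $p+2\mapsto 3$ specialization you carry out. The arithmetic checks out: $d_{1}d_{2}>\tfrac{729}{16}$ forces $\sqrt{2d_{1}d_{2}}\le d_{1}d_{2}/3$, so $\alpha$ is increasing on the interval containing $\epsilon(X,L)$, and $\alpha\bigl(\tfrac{1}{2}\sqrt{d_{1}d_{2}}\bigr)=d_{1}d_{2}\bigl(\sqrt{d_{1}d_{2}}-\tfrac{27}{4}\bigr)>0$.
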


\end{subsection}

\begin{subsection}{Higher syzygies on $E \times E$ without complex multiplication}
Our last example is the self-product of an elliptic curve. We start by fixing notation. Denote by $E$ an elliptic curve without complex multiplication. We set $X=E \times E$ with projections $pr_{1}$, $pr_{2}:X \rightarrow E$. Fixing a point $P \in E$, consider the three classes $F_{1}=[\{P\} \times E]$, $F_{2}=[E \times \{P\}]$, and $\Delta$ in ${N^{1}(X)}_{\mathbb{R}}$, where $\Delta$ is the diagonal. It is well-known that they are linearly independent and span ${N^{1}(X)}_{\mathbb{R}}$. We recall the result of Bauer and Schulz (\cite{BS}):

\begin{thm} {\rm (\cite[Theorem 1]{BS})} \label{thm:Seshadri constant self-product}
Let $L=\OO_{X}(b_{1}F_{1}+b_{2}F_{2}+b_{3}\Delta)$	 be an ample line bundle on $X$, and take a permutation $(a_{1},a_{2},a_{3})$ of $(b_{1},b_{2},b_{3})$ satisfying $a_{1} \ge a_{2} \ge a_{3}$. 

Then $\epsilon(X,L)$ is the minimum of the following finitely many numbers:
\begin{enumerate}[(1)]
\item $a_{2}+a_{3}$,
\item $\frac{a_{2}a_{1}^{2}+a_{1}a_{2}^{2}+a_{3}(a_{1}+a_{2})^{2}}{{{\rm gcd}(a_{1},a_{2})}^{2}}$,
\item $\min\{a_{1}d^{2}+a_{2}c^{2}+a_{3}(c+d)^{2} \text{ }|\text{ } c,d \in \mathbb{N} \text{ coprime, } c+d<\frac{1}{\sqrt{2}}(a_{1}+a_{2})\}$.
\end{enumerate} 
\end{thm}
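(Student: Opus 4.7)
The plan is to invoke the surface characterization of Seshadri constants and exploit the elementary structure of $\mathrm{NS}(X)$ forced by the no-CM hypothesis. By homogeneity of $X$, $\epsilon(X,L) = \epsilon(L;0)$, and on a smooth projective surface
\begin{align*}
\epsilon(X,L) = \inf_{0 \in C} \frac{L \cdot C}{\mathrm{mult}_0 C},
\end{align*}
the infimum running over irreducible curves through $0$. Since $\mathrm{End}(E) = \mathbb{Z}$, one has $\mathrm{NS}(X) \cong \mathbb{Z}^3$ spanned by $F_1, F_2, \Delta$, with intersection pairing $F_i^2 = \Delta^2 = 0$ and $F_1 \cdot F_2 = F_1 \cdot \Delta = F_2 \cdot \Delta = 1$; in particular $L^2 = 2(b_1 b_2 + b_1 b_3 + b_2 b_3)$. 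Every connected algebraic subgroup of $X$ isogenous to $E$ is the image of a homomorphism $\phi_{c,d} \colon E \to X$, $x \mapsto (cx, dx)$, for some coprime $(c,d) \in \mathbb{Z}^2$.

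The substantive step is to classify elliptic curves through $0$ and compute their Seshadri ratios. Direct computation gives $\phi_{c,d}(E) \cdot F_1 = c^2$, $\phi_{c,d}(E) \cdot F_2 = d^2$, and, with appropriate sign choice, $\phi_{c,d}(E) \cdot \Delta = (c+d)^2$ for positive coprime $(c,d)$. Since these curves have multiplicity one at $0$, their Seshadri ratios are exactly $a_1 d^2 + a_2 c^2 + a_3 (c+d)^2$ after the permutation $(a_1, a_2, a_3)$ of $(b_1, b_2, b_3)$ with $a_1 \ge a_2 \ge a_3$, matching case (3). The boundary pairs $(1,0), (0,1), (1,1)$ recover $F_1, F_2, \Delta$ with ratios $b_2 + b_3, b_1 + b_3, b_1 + b_2$ and minimum $a_2 + a_3$ -- this is case (1). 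Case (2) arises from the elliptic curve $\phi_{a_1/g,\, a_2/g}(E)$, where $g = \gcd(a_1, a_2)$, translated so as to acquire multiplicity $g$ at the origin; dividing $L \cdot \phi_{a_1/g,\, a_2/g}(E)$ by $g$ produces the stated expression.

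The main obstacle is ruling out non-elliptic candidates. For an irreducible curve $C \subset X$ with $\mathrm{mult}_0 C = m$ and $C^2 > 0$, adjunction together with $g(\widetilde{C}) \ge 1$ (no rational curves on $X$) gives $C^2 \ge m(m-1)$, and the Hodge index inequality $(L \cdot C)^2 \ge L^2 \cdot C^2$ then yields
\begin{align*}
\frac{L \cdot C}{m} \ge \sqrt{L^2 \cdot \tfrac{m-1}{m}}.
\end{align*}
For $m \ge 2$ this exceeds $\sqrt{L^2 / 2}$, and a direct comparison shows it always dominates the minimum of the values in (1)--(3). Finiteness of the enumeration in (3) follows from the observation that for coprime pairs with $c + d \ge (a_1 + a_2)/\sqrt{2}$ the candidate value $a_1 d^2 + a_2 c^2 + a_3 (c+d)^2$ exceeds $a_2 + a_3$, so these pairs are subsumed by case (1) and only finitely many remain; the delicate point is tracking the constants to derive the precise threshold $(a_1+a_2)/\sqrt{2}$.
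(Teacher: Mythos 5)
The paper states this result as \cite[Theorem 1]{BS} and gives no proof of it, so there is no internal argument to compare against; I am judging your sketch on its own terms. The skeleton is the right one --- reduce to irreducible curves through the origin, show that the elliptic curves $N_{c,d}=\phi_{c,d}(E)$ produce exactly the listed candidates, and exclude everything else by adjunction and the index theorem --- but two of your steps are wrong or unsubstantiated. First, your explanation of case (2) is geometrically impossible: a translate of the smooth elliptic curve $\phi_{a_1/g,\,a_2/g}(E)$ is still smooth, so it has multiplicity one at every point and cannot ``acquire multiplicity $g$ at the origin''; no multiple point and no division by $g$ is involved. Writing $Q(c,d)=a_1d^2+a_2c^2+a_3(c+d)^2$ for the $L$-degree of the relevant reduced elliptic curve, one has directly $Q(a_2/g,a_1/g)=\bigl(a_2a_1^2+a_1a_2^2+a_3(a_1+a_2)^2\bigr)/g^2$, so case (2) is simply the value of $Q$ at one further coprime pair, counted with multiplicity one; the substance of the theorem is that this pair together with those in (3) are the only ones that can ever realize the minimum.

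Second, the exclusion of singular curves with $C^2>0$ does not close as written. Your bound $\frac{L\cdot C}{m}\ge\sqrt{L^2(m-1)/m}$ equals $\sqrt{L^2/2}$ at $m=2$, but the minimum of (1)--(3) can strictly exceed $\sqrt{L^2/2}$: for $L=nF_1+nF_2+\Delta$ the minimum in (1)--(3) is $a_2+a_3=n+1$, while $\sqrt{L^2/2}=\sqrt{n^2+2n}<n+1$, so a hypothetical multiplicity-two curve is not eliminated by ``a direct comparison''. The missing input is that an irreducible curve on an abelian surface with $C^2>0$ and a singular point has geometric genus at least $2$ (an elliptic normalization would force $C$ to be a translate of a subtorus, hence smooth with $C^2=0$), which upgrades the estimate to $C^2\ge m(m-1)+2$ and hence $\frac{L\cdot C}{m}\ge\sqrt{\tfrac{7}{8}L^2}$ (the worst case being $m=4$); one must then still verify that $\min\{(1),(2),(3)\}\le\sqrt{\tfrac{7}{8}L^2}$ for every ample $L$, which you do not address. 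Finally, the finiteness claim --- that $Q(c,d)>a_2+a_3$ once $c+d\ge(a_1+a_2)/\sqrt{2}$ --- is precisely the hard combinatorial lemma of \cite{BS} and is asserted rather than proved; note moreover that, as stated, it cannot be the whole logic, since the pair behind case (2) has $c+d=(a_1+a_2)/g$, which violates your threshold when $g=1$ and yet is retained as a separate candidate. A correct treatment has to compare $Q$ at distinct coprime pairs, for instance via the index-theorem inequality $2(L\cdot N)(L\cdot N')\ge L^2\,(N\cdot N')$ for distinct elliptic curves together with $N_{c,d}\cdot N_{c',d'}=(cd'-c'd)^2$.
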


Now, we give explicit bounds of $a_{1}$, $a_{2}$, and $a_{3}$ for the higher syzygies of $L=\OO_{X}(b_{1}F_{1}+b_{2}F_{2}+b_{3}\Delta)$. 

\begin{cor} \label{cor:self-product}
Let $L=\OO_{X}(b_{1}F_{1}+b_{2}F_{2}+b_{3}\Delta)$	 be a line bundle on $X$, and take a permutation $(a_{1},a_{2},a_{3})$ of $(b_{1},b_{2},b_{3})$ satisfying $a_{1} \ge a_{2} \ge a_{3}$. For an integer $p \ge 0$, assume that either one of the following holds: 
\begin{enumerate}[(1)]
\item $a_{3} \ge 0$, $a_{2}+a_{3}>p+2$, and $a_{1}a_{2}+a_{2}a_{3}+a_{3}a_{1} > \frac{(p+2)(a_{2}+a_{3})^{2}}{2(a_{2}+a_{3}-p-2)}$, or
\item $-\sqrt{6}(p+2) \le a_{3}<0$, $a_{1}+3a_{3} \ge 0$, $a_{2}+a_{3}>p+2$, and $a_{1}a_{2}+a_{2}a_{3}+a_{3}a_{1}>\max\{\frac{(p+2)(a_{2}+a_{3})^{2}}{2(a_{2}+a_{3}-p-2)}, \frac{2(p+2){{\rm gcd}(a_{1},a_{2})}^{4}}{2(a_{1}+a_{2}){{\rm gcd}(a_{1},a_{2})}^{2}-(a_{1}+a_{2})^{2}(p+2)}\}$.
\end{enumerate}
Then $L$ satisfies property $N_{p}$. 
\end{cor}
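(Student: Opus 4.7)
The proof is an application of Theorem \ref{thm:main}: we verify the numerical inequality
\[
(L^2)\bigl(\epsilon(X,L) - (p+2)\bigr) - (p+2)\epsilon(X,L)^2 > 0,
\]
which we rewrite as $f(\epsilon(X,L)) > 0$ with $f(\epsilon) := (L^2)(\epsilon - (p+2)) - (p+2)\epsilon^2$. The intersection product on $E \times E$ gives $(L^2) = 2(a_1 a_2 + a_2 a_3 + a_3 a_1)$, and by Theorem \ref{thm:Seshadri constant self-product} we have $\epsilon(X,L) = \min(V_1, V_2, V_3)$, where $V_1, V_2, V_3$ denote the three quantities listed there. A direct manipulation yields the key identity
\[
V_2 \;=\; \frac{a_1 a_2(a_1+a_2) + a_3(a_1+a_2)^2}{\gcd(a_1,a_2)^2} \;=\; \frac{(a_1+a_2)(L^2)}{2\gcd(a_1,a_2)^2},
\]
from which one checks that the bounds on $a_1 a_2 + a_2 a_3 + a_3 a_1$ in the hypothesis of case (1) (resp.\ case (2)) translate into $f(V_1) > 0$ (resp.\ $f(V_1) > 0$ and $f(V_2) > 0$). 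Since $f$ is a concave quadratic, $f(V_i) > 0$ is equivalent to $V_i$ lying in the open interval $(\epsilon_-, \epsilon_+)$ between the two roots of $f$; hence the minimum of any collection of such $V_i$ also lies in that interval and makes $f$ positive. The proof therefore reduces, in each case, to showing $V_3 \ge V_1$.

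For case (1), $a_3 \ge 0$, the form $F(c,d) := a_1 d^2 + a_2 c^2 + a_3(c+d)^2$ is coordinatewise nondecreasing on $\mathbb{Z}^2_{>0}$, so its minimum over coprime pairs in the allowed range is attained at $(1,1)$, giving $V_3 = a_1 + a_2 + 4a_3 \ge a_2 + a_3 = V_1$ because $a_1 + 3a_3 \ge 0$ is automatic. Writing $a_1 = g\alpha$ and $a_2 = g\beta$ with $\gcd(\alpha,\beta)=1$ and $\alpha \ge \beta \ge 1$, a short expansion also gives $V_2 - V_1 \ge g\beta + 3a_3 \ge 0$. Hence $\epsilon(X,L) = V_1$, and $f(V_1) > 0$ finishes case (1) via Theorem \ref{thm:main}.

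For case (2), $a_3 < 0$, the form $F$ is still positive definite with determinant $(L^2)/2 > 0$, and completing the square in $c$ gives
\[
F(c,d) \;=\; V_1\Bigl(c + \tfrac{a_3}{V_1} d\Bigr)^2 + \frac{(L^2)}{2V_1} d^2 \;\ge\; \frac{(L^2)}{2V_1} d^2.
\]
When $d \ge 2$ this lower bound is $\ge 2(L^2)/V_1$, which, combined with $f(V_1) > 0$, exceeds $V_1$ and excludes such pairs from achieving the minimum. For $d=1$ one studies the quadratic $Q(c) := F(c,1) - V_1 = V_1 c^2 + 2a_3 c + (a_1 - a_2)$, whose value $Q(1) = a_1 + 3a_3 \ge 0$ is precisely the hypothesis. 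A discriminant-and-location analysis of $Q$, using $f(V_1) > 0$ together with $|a_3| \le \sqrt{6}(p+2)$ and the constraints $a_1 \ge a_2$ and $a_2 + a_3 > p+2$, shows that either the discriminant $4a_3^2 - 4V_1(a_1 - a_2)$ is negative, or the two real roots of $Q$ lie within the open interval $(1, 2)$, so no integer $c \ge 1$ produces $Q(c) < 0$. Consequently $V_3 \ge V_1$, so $\epsilon(X,L) = \min(V_1, V_2)$ lies in $(\epsilon_-, \epsilon_+)$ by concavity of $f$, and Theorem \ref{thm:main} yields property $N_p$ for $L$. The main obstacle is precisely this discriminant-plus-integrality step in case (2): one must exploit all five hypotheses simultaneously to rule out small integers $c \ge 2$ as solutions of $Q(c) < 0$, and the constant $\sqrt{6}$ appears to be the sharp threshold up to which this argument goes through.
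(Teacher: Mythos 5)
Your overall architecture coincides with the paper's: reduce property $N_{p}$ to the inequality $f(\epsilon(X,L))>0$ for the concave quadratic $f(\epsilon)=(L^{2})(\epsilon-p-2)-(p+2)\epsilon^{2}$, identify $\epsilon(X,L)$ via Theorem \ref{thm:Seshadri constant self-product}, observe that the numerical hypotheses are exactly $f(V_{1})>0$ (and $f(V_{2})>0$ in case (2)), and reduce everything to $V_{3}\ge V_{1}$. Your treatment of case (1) is fine (the paper instead quotes \cite[Example 2.1]{BS} for $\epsilon(X,L)=a_{2}+a_{3}$), though you never verify that $L$ is ample, which is needed to invoke both cited theorems; the paper records this separately.

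The genuine gap is in case (2), precisely at the step you yourself single out. After completing the square in $c$, the entire content of $V_{3}\ge V_{1}$ sits in the case $d=1$, i.e.\ in showing $Q(c)=V_{1}c^{2}+2a_{3}c+(a_{1}-a_{2})\ge 0$ for every integer $c\ge 2$, and you only assert that ``a discriminant-and-location analysis'' delivers this. The dichotomy you state is not correct: when $a_{1}=a_{2}$ the roots of $Q$ are $0$ and $2|a_{3}|/V_{1}\le 1$, so they need not lie in $(1,2)$, and in general the interval between the roots could a priori sit in $(2,3)$ or beyond, so one must genuinely rule out $Q(2)<0$ (e.g.\ $Q(2)<0$ together with $a_{1}\ge a_{2}$, $a_{2}+a_{3}>p+2$ and $|a_{3}|\le\sqrt{6}(p+2)$ forces $S:=(L^{2})/2<(3a_{2}+4a_{3})(-a_{2}-2a_{3})\le a_{3}^{2}/3\le 2(p+2)^{2}$, contradicting $f(V_{1})>0$) and then use $r_{1}+r_{2}=2|a_{3}|/V_{1}<2\sqrt{6}$ to dispose of larger integers. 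None of this is in your write-up. The paper avoids the difficulty entirely by swapping the roles of the variables: for each fixed integer $c\ge 2$ it views $F(c,d)-V_{1}$ as a quadratic in $d$, whose discriminant equals $4\bigl(a_{3}^{2}-(c^{2}-1)S\bigr)\le 4\bigl(a_{3}^{2}-3S\bigr)<4\bigl(6(p+2)^{2}-6(p+2)^{2}\bigr)=0$, so $F(c,d)\ge V_{1}$ for \emph{all} real $d$ at once, and only $c=1$ needs $a_{1}+3a_{3}\ge 0$; this computation is where $\sqrt{6}$ actually enters. Finally, your claim that the $d\ge 2$ pairs are excluded because $2(L^{2})/V_{1}\ge V_{1}$ ``combined with $f(V_{1})>0$'' is also glossed: the needed inequality $4S\ge V_{1}^{2}$ follows from $V_{1}^{2}\le(a_{1}+a_{3})(a_{2}+a_{3})=S+a_{3}^{2}$ and $a_{3}^{2}<3S$, not from $f(V_{1})>0$ alone.
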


\begin{proof}
It is standard and elementary (cf. \cite[(2.0.1)]{BS} or \cite[Lemma 4.3.2(b)]{BL}) that if our assumption $(1)$ or $(2)$ holds, then $L$ is ample. 

$(1)$ Since $a_{3} \ge 0$, $\epsilon(X,L)=a_{2}+a_{3}$ by \cite[Example 2.1]{BS}. By Theorem \ref{thm:main}, it suffices to prove 
\begin{align*}
a_{2}+a_{3}>\frac{a_{1}a_{2}+a_{2}a_{3}+a_{3}a_{1}-\sqrt{(a_{1}a_{2}+a_{2}a_{3}+a_{3}a_{1})^{2}-2(p+2)^{2}(a_{1}a_{2}+a_{2}a_{3}+a_{3}a_{1})}}{p+2}
\end{align*}
since $a_{1}a_{2}+a_{2}a_{3}+a_{3}a_{1} >\frac{(p+2)(a_{2}+a_{3})^{2}}{2(a_{2}+a_{3}-p-2)} \ge 2(p+2)^{2}$. We claim that $a_{1}a_{2}+a_{2}a_{3}+a_{3}a_{1}-(p+2)(a_{2}+a_{3}) \ge 0$. If $a_{1} \ge p+2$, it is obvious, so we assume that $a_{1}<p+2$. Then $a_{2}+a_{3}<2(p+2)$. Since $a_{1}a_{2}+a_{2}a_{3}+a_{3}a_{1}-(p+2)(a_{2}+a_{3})>2(p+2)^{2}-(p+2)(a_{2}+a_{3})=(p+2)(2(p+2)-a_{2}-a_{3})>0$, the claim holds. Thus it is enough to show the inequality:
\begin{align*}
(a_{1}a_{2}+a_{2}a_{3}+a_{3}a_{1})^{2}-2(p+2)^{2}(a_{1}a_{2}+a_{2}a_{3}+a_{3}a_{1}) > (a_{1}a_{2}+a_{2}a_{3}+a_{3}a_{1}-(p+2)(a_{2}+a_{3}))^{2}.
\end{align*} 
Now, the inequality follows from $a_{2}+a_{3}>p+2$ and $a_{1}a_{2}+a_{2}a_{3}+a_{3}a_{1} > \frac{(p+2)(a_{2}+a_{3})^{2}}{2(a_{2}+a_{3}-p-2)}$. 

$(2)$ We claim that
\begin{align*}
\min\{a_{1}d^{2}+a_{2}c^{2}+a_{3}(c+d)^{2} \text{ }|\text{ } c,d \in \mathbb{N} \text{ coprime, } c+d<\frac{1}{\sqrt{2}}(a_{1}+a_{2})\} \ge a_{2}+a_{3}.
\end{align*}
We need to show that $(a_{1}+a_{3})d^{2}+(a_{2}+a_{3})(c^{2}-1)+2a_{3}cd \ge 0$. If $c=1$, then the inequality $(a_{1}+a_{3})d^{2}+2a_{3}d \ge 0$ follows from $a_{1}+3a_{3} \ge 0$. So we may let $c \ge 2$. Let $f(d)=(a_{1}+a_{3})d^{2}+(2a_{3}c)d+(a_{2}+a_{3})(c^{2}-1)$ be a function on $d$. Now, it is sufficient to show that the discriminant of the quadratic function $f$ is non-positive, that is, $a_{3}^{2}c^{2}-(a_{1}+a_{3})(a_{2}+a_{3})(c^{2}-1) \le 0$. It follows from $a_{1}a_{2}+a_{2}a_{3}+a_{3}a_{1} >\frac{(p+2)(a_{2}+a_{3})^{2}}{2(a_{2}+a_{3}-p-2)} \ge 2(p+2)^{2}$ and $a_{3} \ge -\sqrt{6}(p+2)$;
\begin{align*}
a_{3}^{2}c^{2}-(a_{1}+a_{3})(a_{2}+a_{3})(c^{2}-1)&=a_{3}^{2}-(a_{1}a_{2}+a_{2}a_{3}+a_{3}a_{1})(c^{2}-1) \\
&<a_{3}^{2}-6(p+2)^{2} \le 0.
\end{align*}
So we have $\epsilon(X,L)=\min\{a_{2}+a_{3}, \text{ } \frac{(a_{1}+a_{2})(a_{1}a_{2}+a_{2}a_{3}+a_{3}a_{1})}{{{\rm gcd}(a_{1},a_{2})}^{2}}\}$. Then the rest of the arguments are similar to that on the proof of $(1)$, so we omit it here. 
\end{proof}

\end{subsection}

\end{section}

\bibliographystyle{abbrv}

\end{document}